\newcommand{\p}{\partial}
\newcommand{\la}{\langle}
\newcommand{\ra}{\rangle}
\newcommand{\na}{\nabla}
\newcommand{\eps}{\varepsilon}
\newcommand{\be}{\begin{equation}}
\newcommand{\ba}{\begin{aligned}}
\newcommand{\bee}{\begin{equation*}}
\newcommand{\ee}{\end{equation}}
\newcommand{\ea}{\end{aligned}}
\newcommand{\eee}{\end{equation*}}
\newcommand{\bea}{\begin{equation} \begin{aligned} }
\newcommand{\eea}{\end{aligned}\end{equation} }
\newcommand{\R}{{ \mathbb{R}  }}
\newcommand{\Tn}{{ \mathbb{T}^n  }}
\newcommand{\om}{{ \omega   }}
\newcommand{\nn}{{ \nonumber }}
\newcommand{\cE}{{\mathcal{E}}}
\newcommand{\cF}{{\mathcal{F}}}
\newcommand{\cH}{ {\mathcal{H}_{\mu_*}} }
\newcommand{\cL}{{\mathscr{L}}}
\newcommand{\cM}{\mathcal{M}}
\newcommand{\cN}{{\mathcal{N}}}
\newcommand{\cP}{{\mathcal{P}}}
\newcommand{\cU}{{ \mathcal{U}   }}
\newcommand{\cV}{{ \mathcal{V}   }}
\newcommand{\cW}{{\mathcal{W}}}
\newcommand{\vphi}{\varphi}
\newcommand{\si}{\sigma}
\newcommand{\T}{\tau}
\newcommand{\cK}{\mathcal{K}}
\newcommand{\z}{\zeta}
\newcommand{\tsi}{\tilde{\sigma}}
\newcommand{\tcU}{\widetilde{\mathcal{U}}}
\newcommand{\tcV}{\widetilde{\mathcal{V}}}
\newcommand{\tcW}{\widetilde{\mathcal{W}}}
\newcommand{\rw}{\rightarrow}
\newcommand{\al}{\alpha}
\newcommand{\ep}{\varepsilon}
\newcommand{\Tr}{\textrm{tr}}
\newcommand{\Div}{\operatorname{div}}
\newtheorem{theorem}{Theorem}[section]
\newtheorem{lemma}[theorem]{Lemma}
\newtheorem{corollary}[theorem]{Corollary}
\newtheorem{example}[theorem]{Example}
\theoremstyle{remark}
\newtheorem{remark}[theorem]{Remark}
\theoremstyle{definition}
\newtheorem{definition}[theorem]{Definition}
\numberwithin{equation}{section}
\title[Convergence of Wasserstein gradient flow]{Wasserstein–{\L}ojasiewicz inequalities and asymptotics of McKean–Vlasov equation}
\author{Beomjun Choi}
\address{BC: Department of Mathematical Sciences, Korea Advanced Institute of Science and Technology, Daejeon 34141, Republic of Korea}
\email{bchoi@kaist.ac.kr}
\author{Seunghoon Jeong}
\address{SJ: Department of Mathematics, Pohang University of Science and Technology, Pohang, 37673, Republic of Korea}
\email{jshmth@postech.ac.kr}
\author{Geuntaek Seo}
\address{GS: Department of Mathematics, Pohang University of Science and Technology, Pohang, 37673, Republic of Korea}
\email{gtseo@postech.ac.kr}
\begin{document}

\begin{abstract} 
We prove convergence to equilibrium for solutions to the McKean–Vlasov (granular media) equation on the flat torus in a genuinely nonconvex setting. Our approach is based on a Wasserstein–Łojasiewicz gradient inequality for the associated free energy, established under mild analyticity assumptions on the confinement and interaction potentials.  This yields convergence of the corresponding Wasserstein gradient flow without convexity assumption and without postulating log-Sobolev related functional inequalities. We expect this strategy to extend to more general nonconvex Wasserstein gradient flows. In the present work we develop it in the McKean–Vlasov setting, with the Keller–Segel chemotaxis model on the torus as a prominent application.
\end{abstract}

\maketitle

\section{Introduction}

In this paper, we study the asymptotic convergence of solutions to the McKean-Vlasov equation (also known as the granular media equation)
\begin{equation}\label{MV}
\partial_t \rho
=  \Delta \rho
+ \nabla \cdot \big(\rho \,\nabla (V + W * \rho)\big).
\end{equation}
This nonlocal, nonlinear Fokker-Planck equation describes the evolution of a density of randomly moving particles under the combined effects of an external confinement potential $V$ and pairwise interactions modeled by $W$. The equation admits the free energy functional
\begin{equation}\label{energy_functional}
\cF(\mu):=   \int   \log \rho \,d\mu  +  \int  V \, d\mu + \frac{1}{2} \iint  W(x-y) \, d\mu(x)d\mu(y),
\end{equation}
where $\mu=\rho\,dx$, and \eqref{MV} can be formulated as the $2$-Wasserstein gradient flow of $\mathcal{F}$ on the space of absolutely continuous probability measures $\mathcal{P}_2^{ac}$, within the formal Riemannian calculus pioneered by Otto \cite{MR1842429}.

At the microscopic level, \eqref{MV} arises as the mean-field limit of an interacting $N$-particle Langevin dynamics driven by the same confinement and interaction potentials $V$ and $W$ (see \eqref{N_SDE}). Such systems and their continuum limit equation \eqref{MV} appear in statistical physics, interacting particle systems, plasma models, and, more recently, in the training of neural networks in machine learning and in sampling algorithms. In all these contexts, understanding the long-time behavior and convergence to equilibrium of the dynamics is of central importance.

The long-time behavior of \eqref{MV} has been extensively studied. 
Through pioneering works, convergence to equilibrium is well understood under displacement convexity of the free energy, which holds when $V$ and/or $W$ are convex. 
Alternatively, convergence follows from functional inequalities such as log-Sobolev or Talagrand inequalities, which effectively impose a version of strict convexity of the energy landscape around the equilibrium; see Remark~\ref{remark-LSLS}. 
However, these conditions exclude many nonconvex settings of practical relevance, and the general case has remained largely open.

\medskip


\noindent \textbf{Main result.}
We establish convergence to equilibrium for \eqref{MV} \emph{without imposing any convexity (or small-nonconvexity) assumptions on $V$ and $W$, and without assuming \emph{a priori} functional inequalities for $\mathcal{F}$ that are typically derived from such assumptions}; see Theorem~\ref{thm-mainconvergence}. For clarity, we work throughout on the flat torus $\mathbb{R}^n/\mathbb{Z}^n$, $n\ge1$.

The key ingredient, proved in Theorem~\ref{thm-main-LS}, is a {\L}ojasiewicz inequality for functionals on the Wasserstein space. 
Such an inequality implies a coercivity estimate for the entropy dissipation near stationary solutions to gradient flow. 
Our main contribution is a methodology for establishing Wasserstein--{\L}ojasiewicz inequalities for a broad class of analytic energies on the Wasserstein space. We develop this approach using \eqref{MV} as a guiding model problem; in this setting, the resulting coercivity yields convergence of every gradient-flow trajectory of \eqref{MV} to a stationary solution, provided no blow-up occurs.

Our main structural assumption is analyticity of $V$ and $W$, allowing $W$ to have a Coulomb-type singularity; see Assumptions~\textbf{(\ref{assump_A1})}--\textbf{(\ref{assump_A3})}. As a key example within this framework, we show that the parabolic--elliptic Keller--Segel chemotaxis system on the torus converges to equilibrium under the same boundedness assumption; see Theorem~\ref{cor-KSeq}. Blow-up may occur when the singular attraction encoded by $W$ dominates diffusion. By contrast, when $W$ is regular, Theorem~\ref{thm-noblowup} shows that solutions remain uniformly bounded for all time.
\medskip

\noindent\textbf{{\L}ojasiewicz inequality.}
For a Euclidean gradient flow $\dot{x} = -\nabla f(x)$, a {\L}ojasiewicz inequality around a critical point $x_0$ takes the form
\begin{equation}\label{167}
|\nabla f(x)| \ge c\, |f(x)-f(x_0)|^\theta,
\qquad \theta\in[1/2,1),
\end{equation}
and holds for every analytic $f$ \cite{lojasiewicz1965ensembles,Loj3}. 
Combined with the dissipation identity $\frac{d}{dt}f(x(t))=-|\nabla f(x(t))|^2$, it implies convergence of any trajectory with an $\omega$-limit\footnote{For an exposition of this {\L}ojasiewicz argument, we refer to \cite[Section~1]{KMP}. We also follow this approach in the proof of Theorem~\ref{thm-mainconvergence}.}, with rates depending on~$\theta$; see Corollary~\ref{corollary-rate}. 
The case $\theta=1/2$ corresponds to the classical Polyak-{\L}ojasiewicz inequality and yields exponential convergence. 

In the Wasserstein setting, establishing an analogue of \eqref{167} requires nontrivial work, both because the underlying Wasserstein space lacks linear structure and because we work in an infinite-dimensional setting. Nevertheless, we show that an appropriate Wasserstein-{\L}ojasiewicz inequality holds for~$\mathcal{F}$, and this is the key ingredient in the convergence result described above.

\medskip

We now fix notation for the three components of the energy \(\mathcal F\):
\[
\mathcal F = \mathcal U + \mathcal V + \mathcal W,
\]
with \(\mathcal U\), \(\mathcal V\), \(\mathcal W\) denoting the internal, confinement, and interaction energies, respectively. We will also identify \(\mu\) with its density \(\rho\) when no confusion arises.

As is now well-known, starting from \cite{JKO,MR1842429}, equation \eqref{MV} fits into the general Wasserstein gradient flow framework and encompasses many important evolutionary equations, including the heat equation, porous medium equation, Fokker-Planck equation, and aggregation-diffusion equations (e.g.\ the Keller-Segel system). All of these can be written in the compact form
\[ 
\p_t \rho = \nabla \cdot \Big  (\rho \nabla  \frac{\delta \mathcal{F}}{\delta \rho } \Big  ),
\]
where $\frac{\delta\mathcal F}{\delta\rho}$ denotes the first variation of $\rho\mapsto\mathcal F(\rho dx)$ in the $L^2$ sense (also known as the variational derivative).

\medskip 
\medskip 

\noindent \textbf{Prior results and background.} 
The long-time behavior of solutions to \eqref{MV} has been extensively studied, both at the PDE level and at the level of the associated interacting particle systems. We briefly review several main directions, in order to contrast them with our nonconvex, non-functional-inequality framework.

\smallskip
\noindent\emph{Convex gradient-flow setting.}
A significant body of work has investigated the stability and long-time behavior of McKean-Vlasov dynamics under convexity assumptions on the energy. A seminal work in this direction is the work of Carrillo, McCann and Villani \cite{MR2209130, MR2053570}, who showed that the asymptotic convergence of Wasserstein gradient flows on $\cP_2(\R^n)$ occurs at either exponential or algebraic rates, depending on the convexity properties of the entropy density $U$, the confinement potential $V$, and the interaction potential $W$, including certain degenerate cases. 
In \cite{MR3035983}, the authors improved upon the results of \cite{MR2209130, MR2053570} under the assumptions of convex potentials $V$ and $W$, imposing positive $\lambda$-convexity only for $V$ on $|x|\geq R$.

\smallskip
\noindent\emph{Nonconvex regimes and functional inequalities.}
In genuinely nonconvex settings, and in the absence (until now) of a {\L}ojasiewicz-type inequality in Wasserstein space, the asymptotic convergence of solutions to~\eqref{MV} has often been studied under \emph{functional inequalities}, such as LSI, Talagrand transportation inequalities, or Bakry-Émery curvature lower bounds.
For instance, \cite[Theorem~4.3]{MR3035983} establishes asymptotic convergence even when the potentials \(V\) and \(W\) are not convex, by assuming a Talagrand-type inequality. 
Similarly, \cite{MR2357669} derives convergence based on a transportation cost-information inequality (the so-called \(T_1\) Talagrand inequality), without requiring uniform convexity of either the confinement or interaction potentials. Moreover, in the $\lambda$-geodesically convex setting, \cite{hauer2019kurdyka} demonstrates an equivalence between Talagrand inequalities, LSI, and a {\L}ojasiewicz-type inequality.

Global LSI generally fails in the presence of multiple equilibria. Instead, \cite{Monmarche:2025aa} employs a \emph{local} LSI to prove local exponential stability of non-degenerate local minimizers, which includes the quadratic attractive interaction in a double-well potential (the Curie--Weiss model) at sub-critical temperatures.\footnote{In the literature, the internal energy is often scaled by a factor $\beta^{-1}$ (representing temperature). Under this convention, the diffusion term in \eqref{MV} becomes $\beta^{-1}\Delta\rho$, facilitating the comparison of the relative strengths of diffusion and aggregation. By rescaling $V$, $W$, and time, one can normalize $\beta$ to $1$. Since our assumptions on $V$ and $W$ in \textbf{(\ref{assump_A1})}--\textbf{(\ref{assump_A3})} remain invariant under such rescaling, we set $\beta=1$ in \eqref{MV} without loss of generality.} When the temperature is critical, \cite{Monmarche:2025aa} shows an algebraic rate of convergence to the unique minimizer. (See also \cite[Section 6]{monmarche2025free} for a sharp rate derived via \emph{degenerate} LSI\footnote{This can be understood as a derivation of the Wasserstein-\L ojasiewicz inequality in Theorem \ref{thm-main-LS} for this example by a different method.}.) Let us also mention the paper \cite{BV25}, which analyzes the Wasserstein gradient flow of the Coulomb/Riesz discrepancy energy and establishes a Polyak-{\L}ojasiewicz inequality along the flow, yielding exponential convergence to the target measure. Finally, \cite{eberle2019quantitative} proved exponential convergence via probabilistic methods, assuming the confinement potential is convex (dissipative) at infinity and the interaction is a small perturbation.

\smallskip
\noindent\emph{Aggregation-diffusion and Keller-Segel models.}
As a prominent subclass of \eqref{MV}, the aggregation-diffusion equation corresponding to $V = 0$ has been extensively investigated in various contexts (see, for instance, \cite{carrillo2018aggregationdiffusionequationsdynamicsasymptotics, MR4022083, MR3842067, MR3165280} and the references therein). 
A notable result established in \cite{MR4022083} shows that, for purely attractive interaction kernels, all stationary solutions in $L_+^1 \cap L^\infty(\mathbb{R}^n)$ are radially symmetric and strictly decreasing. 
Furthermore, in the two-dimensional Newtonian case, the equilibrium is unique up to spatial translation, and any solution converges to this profile as $t \to \infty$. The question of uniqueness versus nonuniqueness of equilibria in the presence of porous-medium type diffusion was addressed in \cite{MR4373166}.

 The parabolic-elliptic Keller-Segel system is obtained by taking $V=0$ and $W=\mathcal{N}$ in \eqref{MV}, where $\mathcal{N}$ denotes the Newtonian potential. 
In this case, the competition between attractive aggregation and diffusive spreading leads to rich dynamics, including pattern formation and even finite-time blow-up. 
We refer the reader to \cite{MR2103197} for the supercritical/subcritical dichotomy in two dimensions. 
It is also well known that, on a bounded domain with homogeneous Neumann boundary conditions, solutions converge to a constant equilibrium whenever the total initial mass (or the chemotactic coupling parameter) is sufficiently small (see, e.g., \cite{MR3294230}).
Very recently, \cite{MR4983182} investigates an attractive logarithmic gas on the torus (including the Keller-Segel system in two dimensions). The authors identify a critical inverse temperature $\beta_s$ at which the spatially uniform state $\mu_{\mathrm{unif}}$ changes from being a locally asymptotically stable equilibrium ($\beta < \beta_s$) to being nonlinearly unstable ($\beta > \beta_s$). Combining this with the classical threshold $\beta_c = 2d$ for boundedness of the free energy, they show that non-uniform stationary states exist whenever $\beta > \min\{\beta_s,\beta_c\}$.
The work \cite{MR2322024} is also very closely related to our result, as it first employed a Lojasiewicz-Simon inequality to establish convergence to equilibrium for the two-dimensional Keller-Segel model.
More precisely, they proved that, for the (parabolic-parabolic) Keller-Segel system with homogeneous boundary conditions, any global and strictly positive solution must converge to a stationary solution, provided that blow-up does not occur. 
Our result shows that, in any spatial dimension, the Keller-Segel system admits convergence to equilibrium whenever the solution remains globally bounded on the torus, see Example \ref{example_KS}.

\smallskip
\noindent\emph{Interacting particle systems, phase transitions, and propagation of chaos.}
The asymptotic behavior has also been extensively studied at the level of the interacting particle (SDE) system. 
More precisely, the associated $N$-particle Langevin dynamics is given by
\begin{equation}\begin{aligned}\label{N_SDE}
&dX_t = -\nabla H_N(X_t)\,dt + \sqrt{2\beta^{-1}} dB_t,
\quad \text{with}\\&
H_N(X^1,\dots,X^N)
= \sum_{i=1}^N V(X^i)
+ \frac{1}{2N}\sum_{i\neq j} W(X^i - X^j),\end{aligned}
\end{equation}
and it was shown in \cite{MR3098681, MR3174216, MR3027901}
that the random variable $X_t$ whose law is $\rho(t)$ converges weakly to a stationary
distribution, provided that the interaction potential $W$ is a convex polynomial (see also \cite{MR4312362} for its variants).
Phase transitions in McKean-Vlasov dynamics are reflected in the existence of multiple stationary solutions. 
The interacting diffusion system \eqref{N_SDE}, written in gradient form with respect to the $N$-particle Hamiltonian $H_N$, admits a unique invariant Gibbs measure $M_N \propto e^{-\beta H_N}$ under quite general assumptions, whereas this uniqueness may fail in the mean-field limit as $N \to \infty$. 
The associated McKean-Vlasov dynamics then typically exhibits multiple critical points, corresponding to several distinct equilibrium states. 
This is naturally interpreted as an illustration of a phase transition at the level of the infinite-particle system.
In \cite{MR4062483}, the authors present a detailed analysis of phase transitions for McKean-Vlasov equations on the torus without any confining potential.

For the \emph{propagation of chaos}, that is, the rigorous derivation of continuum equations such as \eqref{MV} from the $N$-particle dynamics \eqref{N_SDE} via a mean-field limit, we refer the reader to \cite{MR4373166, Sznitman_1991,doi:10.1073/pnas.56.6.1907,ChaintronDiez2022, 10.1215/00127094-2020-0019} and the references therein for more details. In recent years, the Wasserstein gradient-flow and McKean-Vlasov structures have also become central to analyzing the training dynamics of overparametrized neural networks. In the mean-field limit, the parameter evolution under stochastic gradient descent is described by the mean-field Langevin dynamics, which formally constitutes a gradient flow of a free energy functional on the Wasserstein space. This perspective has been instrumental in establishing global convergence guarantees despite the non-convex nature of the loss landscape; see, for instance, \cite{MMN18,CB18,chizatmean,nitanda2022convex,hu2021mean,kim2024transformers}. 

\medskip

\noindent \textbf{Remark on the existence of solutions.} Under our standing assumptions on $V$ and $W$, \textbf{(\ref{assump_A1})}--\textbf{(\ref{assump_A3})}, the McKean--Vlasov equation \eqref{MV} on the torus $\mathbb{T}^n$, $n\ge1$, admits a local-in-time solution for initial data in $L^p$ with $p >\max(\frac{n}{2},1)$ (assuming $W$ has a Newtonian singularity). Although developing a complete well-posedness theory is not our main focus, we briefly sketch the standard existence argument.

We employ the Duhamel principle to rewrite \eqref{MV} in the mild form:
\[
  \rho(t)
    = e^{t\Delta}\rho(0)
    + \int_0^t e^{(t-s)\Delta} \nabla \cdot \bigl(\rho(s)\,[\nabla W * \rho(s) + \nabla V]\bigr)\,ds
    =: \mathcal{T}(\rho)(t).
\]
For initial data $\rho(0) \in L^p(\mathbb{T}^n)$ with $p > \max(\frac{n}{2},1)$, we define the solution space
\[
  X_T := C([0,T], L^p(\mathbb{T}^n)).
\]
Using the $L^r \to L^p$ gradient smoothing estimates of the heat semigroup (i.e., $\|\nabla e^{t\Delta}f\|_p \le C t^{-\frac{1}{2}-\frac{n}{2}(\frac{1}{r}-\frac{1}{p})}\|f\|_r$) combined with the Hardy--Littlewood--Sobolev inequality for the interaction term, one can verify that the time integral converges provided $\frac{n}{2p} < 1$. Consequently, there exist $T>0$ and $R>0$ such that the map $\mathcal{T}$ is a contraction on the ball $\{\rho \in X_T : \|\rho\|_{X_T} \le R\}$. The Banach fixed point theorem then yields a unique local-in-time mild solution in $X_T$.

Furthermore, standard bootstrapping arguments (noting that mild solutions satisfy the equation in the distributional sense, allowing the application of smoothing estimate Lemma~\ref{lemma-regularityrho}) ensure that this mild solution becomes smooth for $t>0$. Once a local-in-time classical solution is obtained, it can be extended up to a maximal existence time $T_{\max}$. In particular, regularity estimates in Lemma~\ref{lemma-regularityrho} imply that if $T_{\max} < \infty$, then
\[
  \limsup_{t \uparrow T_{\max}} \|\rho(t)\|_{L^p} = \infty \quad \text{for all } p >\max(\tfrac{n}{2},1).
\]

For a detailed well-posedness theory for aggregation--diffusion equations under comparable assumptions, see \cite{MR2510126, Bedrossian_2011}. 
In fact, for $n\ge 2$, the Lebesgue space $L^{n/2}$ is scaling-critical for the equation. If $\rho(0)\in L^{n/2}$, one may still obtain local well-posedness under an additional smallness assumption on the critical norm; see, for instance, \cite[Section 3]{MR4983182} for such a result for the logarithmically interacting gas.

\bigskip 
Finally, we also expect that our main idea may be applicable beyond the McKean-Vlasov equation. In particular, it appears reasonable to anticipate its relevance to other equations that can be formulated as Wasserstein gradient flows, such as the thin-film equation \cite{MR2581977}, the quantum drift-diffusion equation \cite{MR2533926}, and the Cahn-Hilliard equation \cite{MR2921215}. We also mention the recent work \cite{isobe2023convergence}, which establishes a {\L}ojasiewicz-Simon inequality for a Wasserstein-type gradient flow via a Hilbert-space lifting, providing a notable alternative 
approach in a setting different from the case considered here. We leave a detailed investigation of these directions for future work. 

\medskip 

In the next subsections, we state our main results precisely and outline the strategy of the proofs by giving main ideas.

\subsection{Main theorems}

Throughout this paper, we assume $n\ge1$ and that smooth confinement potential $V:\mathbb{R}^n/\mathbb{Z}^n \to \mathbb{R}$ and interaction potential $W:(\mathbb{R}^n/\mathbb{Z}^n)-\mathbb{Z}^n  \to \mathbb{R}$ satisfy the following conditions:

\begin{enumerate}[label=\textbf{(A\arabic*)}, ref=A\arabic*]
\item\label{assump_A1}
$V$ is analytic function. i.e., $\Vert \nabla^k V\Vert_{L^\infty}\le C_V^{k+1}k!$  $\forall k\in\mathbb{N}_0$.
\item\label{assump_A2}
 $W$ is centrally symmetric. i.e., $W(z)=W(-z)$.  
\item\label{assump_A3}
There exists $C_W<\infty$ such that 
\begin{equation*}
|\nabla^k  W(z)| \leq \frac{C_W^{k+1} k!}{|z|^{n-2+k}}, \quad \text{for\,} z\in  [-\tfrac12,\tfrac12]^n \text{ for all }k=1,2,\ldots .  %
\end{equation*}	
\end{enumerate}

The assumption~\textbf{(\ref{assump_A3})} states that $W$ has a quantitative analyticity with a (at most) Coulomb-type singularity.  Throughout the paper, we interpret $V$ (resp. $W$) as a $\mathbb{Z}^n$-periodic function on $\mathbb{R}^n$ (resp. $\mathbb{R}^n \setminus \mathbb{Z}^n$). We remark that an assumption similar to \textbf{(\ref{assump_A3})} appeared in the analysis of the nonlocal Cahn-Hilliard equation by Akagi \cite[(H2), p. 2676]{MR3926130}. See Remark~\ref{remark-analyticity}, for the relevance of the analyticity assumption in our approach and related previous results.

Our first main theorem establishes \L ojasiewicz's gradient inequality for energy functionals on the Wasserstein space, which is a Wasserstein analogue of \L ojasiewicz inequality \eqref{167}.
\begin{theorem}[Wasserstein-\L ojasiewicz inequality] \label{thm-main-LS} Let $\mathcal{F}$ satisfy Assumptions \textup{\textbf{(\ref{assump_A1})}--\textbf{(\ref{assump_A3})}}. For $p>n$ and a given stationary 
probability density $\rho_*$ of \eqref{MV} (in the classical sense), there exist $\theta\in [1/2,1)$, $c>0$ and $\sigma>0$ such that if $\Vert \rho -\rho_*\Vert_{W^{1,p}}\le \sigma  $, then 
\be \label{eq-LS}  \Vert \nabla (\log \rho   +  V +  W *\rho)\Vert_{L^2_\rho}  \ge c\, |\mathcal{F}(\rho)-\mathcal{F}(\rho_*)|^\theta .\ee 
 
\end{theorem}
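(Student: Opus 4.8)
The classical Łojasiewicz-Simon strategy for analytic functionals reduces an infinite-dimensional Łojasiewicz inequality to a finite-dimensional one via a Lyapunov-Schmidt reduction. So the natural plan is:

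1. **Identify the right linear structure.** Since we want to control the quantity $\|\nabla(\log\rho + V + W*\rho)\|_{L^2_\rho}$, which is the norm of the Wasserstein gradient $\operatorname{grad}_W \mathcal F(\rho)$, we should work with the $L^2$-first variation $\frac{\delta\mathcal F}{\delta\rho}(\rho) = \log\rho + 1 + V + W*\rho$. A stationary solution $\rho_0$ satisfies $\log\rho_0 + V + W*\rho_0 = \text{const}$, i.e. $\rho_0 = Z^{-1}e^{-(V+W*\rho_0)}$; in particular $\rho_0$ is smooth and bounded away from $0$. We linearize $\mathcal F$ at $\rho_0$ on the affine space $\{\rho : \int\rho = 1\}$, i.e. in the Banach space $X = \{u \in W^{1,p} : \int u = 0\}$ (or better its complexification, for analyticity). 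The Hessian is the operator $L u = u/\rho_0 + W*u$ — a compact perturbation (the $W*$ term is smoothing, and multiplication by $1/\rho_0$ is an isomorphism $W^{1,p}\to W^{1,p}$ when $\rho_0$ is bounded above and below) of an invertible one, hence Fredholm of index $0$ and self-adjoint on the appropriate weighted $L^2$.

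2. **Verify analyticity of $\mathcal F$ and its gradient.** The confinement term $\mathcal V$ is linear, the interaction term $\mathcal W$ is quadratic, so both are trivially analytic; the only issue is the entropy $\mathcal U(\rho) = \int\rho\log\rho$. Near $\rho_0$, which is bounded below by some $\delta>0$, write $\rho = \rho_0 + u$; then $\rho\log\rho$ is a real-analytic function of $u$ uniformly, because $s\mapsto(\rho_0+s)\log(\rho_0+s)$ has radius of convergence $\gtrsim\delta$ around each point. One needs $W^{1,p}\hookrightarrow C^0$ (Morrey, since $p>n$) to make this a convergent power series in the Banach space $X$, and likewise to see that the $L^2$-gradient map $\rho\mapsto \log\rho + 1 + V + W*\rho$ is analytic as a map from a neighborhood in $W^{1,p}$ into, say, $L^2$ or $W^{-1,p}$. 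The singularity of $W$ is harmless here: $W*\rho$ and $\nabla W*\rho$ are bounded linear operators on $W^{1,p}$ by Assumption (A3) (the kernel is in weak-$L^{n/(n-1)}$ up to the Coulomb threshold), so the interaction energy is a bounded quadratic form.

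3. **Apply the abstract Łojasiewicz-Simon theorem.** With $\mathcal F: U\subset X\to\mathbb R$ analytic, $\mathcal F'(\rho_0) = 0$ (on $X$, after subtracting the constant multiplier), and $\mathcal F''(\rho_0) = L$ Fredholm of index zero, the standard abstract result (e.g. of the Chill / Haraux-Jendoubi type) gives $\theta\in[1/2,1)$, $c>0$, $\sigma_0>0$ such that $\|\mathcal F'(\rho)\|_{X^*} \ge c|\mathcal F(\rho)-\mathcal F(\rho_0)|^\theta$ for $\|\rho-\rho_0\|_X\le\sigma_0$. Here $\mathcal F'(\rho)$ is the $L^2$-variational derivative projected to have zero mean, i.e. essentially $\log\rho + V + W*\rho - \fint(\log\rho + V + W*\rho)$.

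4. **Convert the flat-gradient inequality into the Wasserstein-gradient inequality.** This is the step specific to our setting and, I expect, the main technical obstacle. We need to pass from $\|\mathcal F'(\rho)\|_{X^*}$ (a negative-Sobolev-type norm of $g := \log\rho + V + W*\rho - \text{mean}$) to $\|\nabla g\|_{L^2_\rho}$. One direction is a Poincaré-type inequality: since $\rho$ is close to $\rho_0$ in $W^{1,p}\hookrightarrow C^0$, it is bounded above and below, so $\|\nabla g\|_{L^2_\rho} \gtrsim \|\nabla g\|_{L^2} \gtrsim \|g - \fint g\|_{L^2} = \|g\|_{L^2}$ on the torus, and then $\|g\|_{L^2}\gtrsim\|g\|_{(W^{1,2})^*}\gtrsim \|\mathcal F'(\rho)\|_{X^*}$ provided $X^*$ is no stronger than $(W^{1,2})^*$ (which it is, since $W^{1,p}\subset W^{1,2}$ on the torus for $p>n\ge 2$, wait — need $p\ge 2$; fine, $p>n\ge 1$ and on the bounded torus $W^{1,p}\hookrightarrow W^{1,2}$ when $p\ge 2$, and for $n=1$ one adjusts). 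Actually the cleaner route: choose the abstract Banach space so that $\mathcal F'$ is measured in $L^2$ — then $\|\mathcal F'(\rho)\|_{L^2} = \|g\|_{L^2}$, and we must show $\|g\|_{L^2}\lesssim\|\nabla g\|_{L^2_\rho}$. This is exactly a Poincaré inequality for the mean-zero function $g$ with weight $\rho$; it holds uniformly for $\rho$ in a $C^0$-neighborhood of $\rho_0$ with a uniform constant. Combining this with Step 3 and the equivalence $\|\nabla g\|_{L^2_\rho}\simeq\|\nabla(\log\rho+V+W*\rho)\|_{L^2_\rho}$ (the mean constant drops under $\nabla$) yields exactly \eqref{eq-LS}.

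**Main obstacle.** The delicate points are: (i) making the abstract Łojasiewicz-Simon machinery apply with a function space pair $(X, Y)$ such that $X = W^{1,p}_0$ embeds in $C^0$ (needed for analyticity of the entropy) while the gradient lands in a space $Y$ against which we can run the Poincaré estimate of Step 4 — this requires a careful choice, and verifying the Fredholm property of the Hessian $Lu = u/\rho_0 + W*u$ on that space (the Coulomb singularity of $W$ must be checked not to destroy compactness of the convolution on $W^{1,p}$, which follows from (A3)); and (ii) the analyticity of $\rho\mapsto\log\rho$ as a Banach-space-valued map, which is where $p > n$ and the positivity lower bound on $\rho_0$ enter crucially and which forces the neighborhood $\sigma$ to depend on $\rho_0$. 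The interaction and confinement terms, being quadratic and linear, contribute nothing beyond bounded-operator bookkeeping.
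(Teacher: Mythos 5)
Your plan is essentially correct, but it takes a genuinely different route from the paper. The paper works in Lagrangian (transport-map) coordinates: it pulls $\mathcal F$ back through the exponential map to a functional $G(\Phi)=\mathcal F((\mathrm{id}+\Phi)_\#\mu_0)$ on the space $\mathring W^{2,p}$ of gradient vector fields, proves the \L ojasiewicz--Simon inequality for $G$ there (analyticity of the gradient map involving $\log\det(I+\nabla\Phi)$ and the composed kernel $W(x+\Phi(x)-y-\Phi(y))$, plus Fredholmness of the elliptic second variation $L\xi=P_{\mu_0}(-\rho_0^{-1}\Div(\rho_0\nabla\xi)+\nabla^2V\cdot\xi+\mathcal K[\xi])$), and then transfers back to \eqref{eq-LS} via two steps you do not need: an inverse-function-theorem lemma converting $\|\rho-\rho_0\|_{W^{1,p}}\le\sigma$ into a $W^{2,p}$ bound on the optimal transport map, and the exact comparison $\|\mathcal M(\Phi)\|_{\mathcal H_{\mu_0}}\le\|\nabla(\log\rho+V+W*\rho)\|_{L^2_\mu}$ coming from the isometry $\Psi\mapsto\Psi\circ(\mathrm{id}+\Phi)^{-1}$ followed by a projection. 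You instead linearize in the density itself, where analyticity is far easier (the entropy is a pointwise-analytic function of $\rho$ near the positive $\rho_0$, and $\mathcal V$, $\mathcal W$ are linear and quadratic), and you transfer to the Wasserstein gradient by a weighted Poincar\'e inequality. Both transfers are legitimate; yours trades the paper's delicate analyticity computation for a cheap Poincar\'e step, at the cost that your Hessian $Lu=u/\rho_0+W*u$ is a zeroth-order operator rather than an elliptic one.

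That last point is where your acknowledged ``main obstacle'' genuinely bites, so let me make it concrete. Because $L$ is zeroth order, it is \emph{not} Fredholm from $W^{1,p}_0$ to $L^2_0$ or to $(W^{1,p}_0)^*$ (multiplication by $1/\rho_0$ composed with a compact embedding is compact), so the off-the-shelf Gelfand-triple versions of \L ojasiewicz--Simon do not apply directly with those codomains. The correct bookkeeping is to run the Lyapunov--Schmidt reduction with $\mathcal M=P_0(\log\rho+V+W*\rho)$ as an analytic self-map of a neighborhood of $\rho_0$ in $W^{1,p}_0$, where $L$ \emph{is} Fredholm of index zero (isomorphism plus compact), and then separately verify that $(L+\Pi)^{-1}$ is bounded on $L^2_0$ and that $\mathcal M-L$ has small Lipschitz constant in the $L^2$ norm on a small $C^0$-ball (true, since the pointwise nonlinearity $\log(\rho_0+u)-u/\rho_0$ has small derivative and the convolution term is linear). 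These two estimates are exactly what upgrades the reduced finite-dimensional inequality to $\|\mathcal M(\rho)\|_{L^2}\ge c|\mathcal F(\rho)-\mathcal F(\rho_0)|^\theta$; the paper carries out the analogous two-norm argument in its Section 4. With that in place, your step 4 (lower bound on $\rho$ from the $C^0$-closeness, then Poincar\'e for the mean-zero $g$) closes the proof. So: not a gap in the strategy, but the one step you must not leave to a generic citation.
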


\begin{remark} \label{remark-smoothing}
Under Assumptions \textup{\textbf{(\ref{assump_A1})}--\textbf{(\ref{assump_A3})}}, standard regularity estimates ensure that both stationary solutions and time-dependent solutions (in the sense of distribution) become smooth and strictly positive for $t>0$ provided that they are uniformly bounded in $L^p$ for $p>\max(\frac n 2 ,1)$; see Lemmata~\ref{lemma-regularityrho}, \ref{lemma-lowerbd}, and Remark~\ref{remark-regularityrho}. Therefore, except for those Lemmata, we will treat solution $\rho(x,t)$ as smooth positive classical solution in the remaining of this paper.     
\end{remark}
 Based on the crucial inequality in Theorem~\ref{thm-main-LS}, we characterize the long-time behavior of non-blowing-up solutions as follows. 

\begin{theorem}[global convergence] \label{thm-mainconvergence}
Assume \textup{\textbf{(\ref{assump_A1})}--\textbf{(\ref{assump_A3})}} hold. 
Suppose that $\rho(t)$ is a distributional solution to the McKean--Vlasov equation~\eqref{MV} that does not blow up, in the sense that
\[
\sup_{t\ge 0} \|\rho(t)\|_{L^p}<\infty
\quad \text{for some } p>\max(\tfrac{n}{2},1).
\]
Then the solution converges, as $t\to\infty$, to a stationary solution $\rho_*\in C^\infty(\mathbb{T}^n)$ in the $C^\infty$ topology.
\end{theorem}

We emphasize that stationary solutions to McKean-Valsov \eqref{MV} are in general not unique \cite{Monmarche:2025aa,MR4062483,MR4983182}. 
The theorem above shows that, under our assumptions, bounded trajectory of the flow converges to a single stationary solution uniquely selected by the initial data. We also note that the no-blow-up assumption is essential: there exist choices of $V$ and $W$ satisfying \textup{\textbf{(\ref{assump_A1})}--\textbf{(\ref{assump_A3})}} for which solutions can blow up in finite time; e.g., elliptic-parabolic Keller-Segel equation in Example \ref{example_KS}. In contrast, as demonstrated in Theorem \ref{thm-noblowup}, such blow-ups are ruled out if the interaction potential W is sufficiently regular.

Note also that $C^\infty$ convergence on a compact domain $\Tn$ implies the convergence in Wasserstein distance $d_2$. Moreover, we obtain a rate of convergence. 
\begin{corollary} [rate of convergence c.f. Theorem 3.27 \cite{hauer2019kurdyka}] \label{corollary-rate}Suppose $\rho(t)$ converges to $\rho_*$ as $t\to \infty$ and $\rho_*$ satisfies the \L ojasiewicz inequality \eqref{eq-LS} for $c>0$ and $\theta\in[1/2,1)$. Then there exists $C'<\infty$ such that 
\begin{equation}d_2(\rho(t),\rho_*)\le  C'\begin{aligned}\begin{cases} e^{-\frac {c^2}{2} t} &\text{ if } \theta=\frac12 \\ t^{-\frac{1-\theta}{2\theta-1} } &\text{ if } \theta \in (\frac12,1).

\end{cases} 
\end{aligned} \label{rate}
\end{equation}

\end{corollary}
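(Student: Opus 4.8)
The plan is to run the classical {\L}ojasiewicz ODE argument, now at the level of the Wasserstein gradient flow, using the energy dissipation identity together with the inequality \eqref{eq-LS} that holds once the trajectory has entered the $W^{1,p}$-neighborhood of $\rho_0$. First I would fix a time $t_0$ large enough that $\rho(t)$ stays in the ball $\{\Vert \rho-\rho_0\Vert_{W^{1,p}}\le\sigma\}$ for all $t\ge t_0$; this is legitimate because $\rho(t)\to\rho_0$ in $C^\infty$ by hypothesis, hence in particular in $W^{1,p}$. On $[t_0,\infty)$ we then have the two structural facts: the dissipation identity
\[
\frac{d}{dt}\,\mathcal F(\rho(t)) \;=\; -\,\big\Vert \nabla(\log\rho + V + W*\rho)\big\Vert_{L^2_\rho}^2 \;=\; -\,|\dot\rho|^2,
\]
where $|\dot\rho|$ denotes the metric speed of the curve in $(\mathcal P_2,d_2)$ (these coincide along a $2$-Wasserstein gradient flow), and the {\L}ojasiewicz inequality \eqref{eq-LS}, which I would write as $|\dot\rho| \ge c\,|\mathcal F(\rho)-\mathcal F(\rho_0)|^\theta$. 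Write $H(t):=\mathcal F(\rho(t))-\mathcal F(\rho_0)\ge 0$ (it is nonnegative and nonincreasing, and $H(t)\to 0$ since $\mathcal F$ is continuous along the flow and $\rho(t)\to\rho_0$).

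Next I would treat the length of the tail of the curve. For $t_0\le s<t$, by the triangle inequality $d_2(\rho(t),\rho_0)\le \int_t^\infty |\dot\rho(\tau)|\,d\tau$, so it suffices to bound this integral. In the case $\theta=1/2$: combining $-H' = |\dot\rho|^2 \ge c^2 H$ gives $H(t)\le H(t_0)e^{-c^2(t-t_0)}$, and then $|\dot\rho|^2 = -H'$ integrated against the decay, or more directly the estimate $\int_t^\infty|\dot\rho|\,d\tau \le \big(\int_t^\infty |\dot\rho|^2 d\tau\big)^{1/2}\cdot(\text{time})^{1/2}$ is wasteful; instead use $\frac{d}{dt}\big(2\sqrt{H}\big) = H'/\sqrt H = -|\dot\rho|^2/\sqrt H \le -c\,|\dot\rho|$ (using $|\dot\rho|\ge c\sqrt H$ in the last step), so $\int_t^\infty |\dot\rho|\,d\tau \le \tfrac{2}{c}\sqrt{H(t)} \le \tfrac2c \sqrt{H(t_0)}\,e^{-\frac{c^2}{2}(t-t_0)}$, giving the exponential rate. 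In the case $\theta\in(1/2,1)$: consider $G:=H^{1-\theta}$, so $G' = (1-\theta)H^{-\theta}H' = -(1-\theta)H^{-\theta}|\dot\rho|^2 \le -(1-\theta)c\,|\dot\rho|$ (again using $|\dot\rho|\ge cH^\theta$). Hence $\int_t^\infty |\dot\rho|\,d\tau \le \frac{1}{(1-\theta)c}\,H(t)^{1-\theta}$; it remains to show $H(t)$ decays polynomially. From $-H' = |\dot\rho|^2 \ge c^2 H^{2\theta}$, comparing with the ODE $y' = -c^2 y^{2\theta}$ and noting $2\theta-1>0$ yields $H(t)\le C\,(t-t_0)^{-\frac{1}{2\theta-1}}$, so $H(t)^{1-\theta}\le C\,(t-t_0)^{-\frac{1-\theta}{2\theta-1}}$, which is the claimed rate. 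In both cases one absorbs the constants and the value at $t_0$ into the single constant $C'$, and adjusts for $t\in[0,t_0]$ trivially by enlarging $C'$ (on this bounded interval $d_2(\rho(t),\rho_0)$ is bounded, and the right-hand side of \eqref{rate} is bounded below away from $0$).

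The only genuinely delicate point is the first one: justifying that along the $2$-Wasserstein gradient flow the metric derivative $|\dot\rho(t)|$ exists for a.e.\ $t$, equals $\Vert\nabla(\log\rho+V+W*\rho)\Vert_{L^2_\rho}$, and that the dissipation identity $\frac{d}{dt}\mathcal F(\rho(t)) = -|\dot\rho(t)|^2$ holds as an honest (absolutely continuous) identity rather than merely an inequality (the energy dissipation equality of the gradient flow). For the smooth, strictly positive solutions produced under \textbf{(\ref{assump_A1})}--\textbf{(\ref{assump_A3})} on the torus this is standard: one differentiates $\mathcal F(\rho(t))$ directly using \eqref{MV}, integrates by parts, and uses the curve $t\mapsto\rho(t)$ being absolutely continuous in $(\mathcal P_2,d_2)$ with velocity field $-\nabla\frac{\delta\mathcal F}{\delta\rho}$, so I would simply cite the Otto calculus / \cite{MR1842429,JKO} framework for this and keep the ODE manipulation above as the substance of the proof. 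Everything else is a routine Gr\"onwall/comparison computation, as in \cite[Section~1]{KMP} and \cite[Theorem~3.27]{hauer2019kurdyka}.
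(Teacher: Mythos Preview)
Your proposal is correct and follows essentially the same route as the paper: both set up the energy gap $H(t)=\mathcal F(\rho(t))-\mathcal F(\rho_0)$, use the dissipation identity together with \eqref{eq-LS} to obtain the scalar inequality $-H'\ge c^2 H^{2\theta}$, and then translate the decay of $H$ into a $d_2$ bound via the length-of-tail estimate $d_2(\rho(t),\rho_0)\le \frac{1}{c(1-\theta)}H(t)^{1-\theta}$. The only cosmetic difference is that the paper invokes this last inequality by citing \eqref{FLS} (derived in the proof of Theorem~\ref{thm-mainconvergence} via Benamou--Brenier), whereas you re-derive it inline by differentiating $H^{1-\theta}$; the computations are identical.
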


Finally, in the remark below we compare the Wasserstein-\L ojasiewicz inequality with other convexity conditions and functional inequalities. Here we formally explain why \eqref{eq-LS}, which is weaker than $\lambda$-displacement convexity and log-Sobolev inequalities, is the type of inequality one may expect in nonconvex (or weakly convex) settings, where a critical point of $\mathcal{F}$ may have a degenerate Hessian. 

\begin{remark}[Displacement convexity/LSI vs.\ Wasserstein-Łojasiewicz]\label{remark-LSLS}
The inequality \eqref{eq-LS} becomes stronger as $\theta$ decreases: if it holds with some $\theta$, then it also holds with any $\theta'\in(\theta,1)$ after possibly shrinking $\sigma$. The optimal value $\theta=\tfrac12$ yields exponential convergence of the gradient flow; see Corollary~\ref{corollary-rate}. 

A (nonlinear) log-Sobolev inequality (see, e.g., \cite{Monmarche:2025aa}) for the free energy $\mathcal{F}$ typically reads
\[
I(\rho):=\int \big|\nabla(\log\rho+V+W*\rho)\big|^2\,d\rho \;\ge\; 2\lambda\,\big(\mathcal{F}(\rho)-\mathcal{F}(\rho_*)\big),
\]
which is the optimal case $\theta=\tfrac12$ in \eqref{eq-LS}, since $I(\rho)$ is precisely the squared $L^2_\rho$–norm of the Wasserstein gradient of~$\mathcal{F}$.

Because it is so strong, a log-Sobolev inequality cannot hold in general: for instance, for equations on Euclidean spaces one easily finds functions $f$ for which a critical point $x_0$ does not satisfy a \L ojasiewicz inequality with $\theta=1/2$. Indeed, any gradient flow that converges but not at an exponential rate provides such an example at its limit point. Moreover, such a critical point $x_0$ must necessarily have a degenerate Hessian, in the sense that $\ker \nabla^2 f(x_0)\neq \{0\}$. If $f\in C^2$ and $\nabla^2 f(x_0)$ is nondegenerate, then an elementary argument based on Taylor's theorem yields \eqref{167} with $\theta=1/2$ on a neighborhood of $x_0$. No analyticity assumption is needed in this case.

If $\mathcal{F}$ is $\lambda$-displacement convex for $\lambda>0$, then solutions converge to a unique stationary solution at an exponential rate. In view of Otto's calculus, $\lambda$-displacement convexity formally implies $\mathrm{Hess\,} \mathcal{F}(\rho_*)\ge \lambda I>0$. In this case it is possible to derive \eqref{eq-LS} with $\theta=1/2$ without assuming analyticity of $V$ and $W$. See Corollary \ref{cor-LSnondegenerate}.  In this paper, we computed the Hessian by
\[
\mathrm{Hess\,}\mathcal{F}(\rho_*)[\xi_1,\xi_2]= \langle L\xi_1, \xi_2 \rangle_{L^2_{\rho_*}},
\]
where $L$ is given in \eqref{eq-L} (c.f. \cite[Sec.~3.1]{MR2053570} for corresponding formal computation). From the expression for $L$, one observes that if $V$ and $W$ are (uniformly) convex, then $\langle L \xi, \xi\rangle _{L^2_{\rho_*}}$ is strictly positive for all nonzero regular tangent vector $\xi$, showing that $L$ has only a trivial kernel. i.e., $\rho_*$ is non-degenerate. 

In view of the discussion above, in general nonconvex regime one cannot expect such a log-Sobolev inequality (or local $\lambda$-displacement convexity) to hold at every stationary point; the algebraic or sub-exponential decay observed in that literature is consistent with $\theta>\tfrac12$. See, for example, \cite{MR2357669, MR3035983, Monmarche:2025aa}. In contrast, the Wasserstein-\L ojasiewicz inequality \eqref{eq-LS} with a general exponent $\theta\in[1/2,1)$ precisely captures the weaker coercivity one may still recover near degenerate stationary points.
\end{remark}

\subsection{Main idea}

A very rough outline of the proof of Theorem \ref{thm-main-LS} is as follows. We pull back the functional $\mathcal{F}$ and its gradient to the tangent space of $\mathcal{P}^{ac}_2$ at a fixed base point, which locally models $\mathcal{P}^{ac}_2$ near that point. On this (linear) tangent space, we then aim to establish an (approximate) \L ojasiewicz inequality for the pull-back functional.

\begin{definition}[tangent space and exponential map]
For $\mu_*=\rho_*\,dx$ in $\mathcal{P}^{ac}_2(\mathbb{T}^n)$, we denote by $\mathcal{H}_{\mu_*}$ the Hilbert space obtained by taking the $L^2_{\mu_*}$-completion of the space of gradient vector fields:
\[
\mathcal{H}_{\mu_*}
:= \overline{\{\nabla \Psi : \Psi \in C^\infty(\mathbb{T}^n,\mathbb{R})\}}^{L^2_{\mu_*}(\mathbb{T}^n)}.
\]
For each $X\in \mathcal{H}_{\mu_*}$, we define the map $\exp X:\mathbb{T}^n \to \mathbb{T}^n$ by
\[
\exp X(p) := \exp_p X_p,\qquad p\in \mathbb{T}^n.
\]
We also define $\exp_{\mu_*}: \mathcal{H}_{\mu_*} \to \mathcal{P}^{ac}_2(\mathbb{T}^n)$ by
\[
\exp_{\mu_*}X := (\exp X)_\# \mu_*.
\]
\end{definition}

According to the formalism suggested in \cite{MR1842429}, the space $\mathcal{H}_{\mu_*}$ can be viewed as the tangent space of $\mathcal{P}^{ac}_2$ at $\mu_*=\rho_*\,dx$, that is, $\mathcal{H}_{\mu_*}\cong T_{\mu_*}\mathcal{P}^{ac}_2$. From this point of view, the map
\[
X \longmapsto (\exp X)_\#\mu_* =: \exp_{\mu_*}X
\]
plays the role of the exponential map from $T_{\mu_*}\mathcal{P}^{ac}_2$ to $\mathcal{P}^{ac}_2$. Our approach to Theorem \ref{thm-main-LS} (\L ojasiewicz inequality) starts from representing $\mathcal{F}$ in normal coordinates given by the exponential map $\exp_{\mu_*}$ at the base measure $\mu_*$.

\begin{definition}[pull-back energy]
The pull-back energy $G:\mathcal{H}_{\mu_*}\to \mathbb{R}$ associated with $\mathcal{F}$ at $\mu_*=\rho_*\,dx$ is defined by
\[
G(X) := \mathcal{F}(\exp_{\mu_*}X).
\]
\end{definition}

Unlike $\mathcal{F}$, which is defined on the (formal) Riemannian manifold $\mathcal{P}^{ac}_2$, the functional $G$ has the advantage of being a well-defined functional on the linear space $\mathcal{H}_{\mu_*}$. In the celebrated work of L.~Simon \cite{S0}, \L ojasiewicz's inequality in the finite-dimensional setting was generalized to a class of analytic functionals defined on Banach spaces; see also \cite{MR4129355,MR4199212,MR1609269,MR1986700,MR1800136,MR4129083,MR2226672}. Formally, $(\mathcal{P}^{ac}_2,g)$ is isometric to $(\mathcal{H}_{\mu_*},\exp_{\mu_*}^* g)$. Here $g$ is the (formal) Riemannian metric on $\mathcal{P}^{ac}_2$. This creates a challenge: one has to accommodate \L ojasiewicz's theorem in the Riemannian manifold $(\mathcal{H}_{\mu_*},\exp_{\mu_*}^* g)$. Our crucial observation, however, is that it is in fact sufficient to regard $G$ simply as a functional on the Hilbert space $(\mathcal{H}_{\mu_*}, L^2_{\mu_*})$, whose inner product $L^2_{\mu_*}$ is independent of base point.

\bigskip

Note that $L^2_{\mu_*} = (\exp_{\mu_*}^* g)_0$. The motivation behind this observation is the natural expectation that, as the gradient flow approaches $\mu_*$, the geometry of the space is well approximated by its linearization around $\mu_*$. In this regime, the original Wasserstein gradient flow can be viewed as a small perturbation of the gradient flow of the functional $G$ on the Hilbert space $(\mathcal{H}_{\mu_*},L^2_{\mu_*})$. To derive the inequality on this fixed linear space, we adopt Simon's approach, based on a Lyapunov-Schmidt reduction, and obtain the following.

\begin{theorem}[\L ojasiewicz-Simon inequality on a Banach space]\label{thm-LS-G} Assuming \textup{\textbf{(\ref{assump_A1})}--\textbf{(\ref{assump_A3})}}, let $\rho_*$ be positive probability density in $W^{1,p}$ some $p>n$ and $\mu_*= \rho_*dx$. There exist $\theta\in [1/2,1)$, $c>0$ and $\sigma>0$ such that if $X\in \mathcal{H}_{\mu_*}\cap W^{2,p}$ with $\Vert X \Vert_{W^{2,p}} \le \sigma$, then
\begin{equation}\label{eq-GLSintro}
\Vert \nabla_{L^2_{\mu_*}} G(X) \Vert_{L^2_{\mu_*}}
\;\ge\; c\,\big|G(X)-G(0)\big|^\theta.
\end{equation}
\end{theorem}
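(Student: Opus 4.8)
The plan is to apply L.~Simon's abstract \L ojasiewicz--Simon theorem on Banach spaces to the pull-back energy $G$, treating $G$ purely as a functional on the Hilbert space $(\mathcal{H}_{\mu_0},L^2_{\mu_0})$ rather than on the Riemannian manifold $(\mathcal{H}_{\mu_0},\exp^*_\mu g)$. For this I need three ingredients in the usual Simon framework: (i) $0$ is a critical point of $G$ (which holds because $\rho_0$ is stationary for \eqref{MV}, i.e.\ $\nabla(\log\rho_0+V+W*\rho_0)=0$, so the $L^2_{\mu_0}$-gradient of $G$ at $0$ vanishes); (ii) a suitable analyticity statement: $G$, its gradient $\nabla_{L^2_{\mu_0}}G$, and the second derivative must be real-analytic as maps between appropriate function spaces --- concretely I would show $X\mapsto \nabla_{L^2_{\mu_0}}G(X)$ is analytic as a map from a neighborhood of $0$ in $\mathcal{H}_{\mu_0}\cap W^{2,p}$ into $L^2_{\mu_0}$ (or into $(\mathcal{H}_{\mu_0}\cap W^{2,p})^*$), using Assumptions \textbf{(\ref{assump_A1})}--\textbf{(\ref{assump_A3})} together with the fact that $W^{k,p}(\mathbb{T}^n)$ for $p>n$ is a Banach algebra and that $\rho\mapsto \nabla W*\rho$ is bounded with the stated singularity; (iii) a Fredholm-type condition on the Hessian $L=D^2G(0)$, namely that $L:\mathcal{H}_{\mu_0}\cap W^{2,p}\to (\text{dual or }L^2)$ is Fredholm of index zero with closed range, so that the Lyapunov--Schmidt reduction to $\ker L$ is possible.

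The key steps, in order: First I would record the explicit first and second variations of $G$ at $0$; the first variation gives $\nabla_{L^2_{\mu_0}}G(0)=0$ from stationarity, and the second variation gives the operator $L$ referred to in \eqref{eq-L} and in Remark~\ref{remark-LSLS}, of Schr\"odinger-plus-nonlocal type acting on gradient vector fields. Second, I would verify that $L$ is self-adjoint on $L^2_{\mu_0}$, has compact resolvent (the diffusion term provides ellipticity, the potential and convolution terms are lower order), hence is Fredholm of index zero; in particular $N:=\ker L$ is finite-dimensional and we have the $L^2_{\mu_0}$-orthogonal splitting $\mathcal{H}_{\mu_0}=N\oplus \Rg L$. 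Third, I would carry out the Lyapunov--Schmidt reduction: writing $X=\xi+\eta$ with $\xi\in N$, $\eta\in N^\perp$, the implicit function theorem (valid because $L$ is invertible on $N^\perp$) solves the infinite-dimensional part $\eta=\eta(\xi)$ analytically, producing the finite-dimensional reduced function $g(\xi):=G(\xi+\eta(\xi))$ on $N$. Fourth, I would apply the classical finite-dimensional \L ojasiewicz inequality to the analytic function $g$ near $0\in N$, and then transfer it back: one shows $\|\nabla_{L^2_{\mu_0}}G(X)\|_{L^2_{\mu_0}}$ controls both $\|\nabla g(\xi)\|$ and the ``good'' direction $\|\eta\|$, while $|G(X)-G(0)|$ is comparable to $|g(\xi)-g(0)|$ up to the controlled error from $\eta$, yielding \eqref{eq-GLSintro}. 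This is precisely the structure of Simon's argument (see also \cite{chill2010gradient}), so the bookkeeping is standard once the analyticity and Fredholm properties are in place.

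The main obstacle I expect is step (ii), the analyticity of the gradient map in the right topology, made delicate by two features specific to this setting: the exponential map $\exp_{\mu_0}$ on the torus enters the formula for $G$, so $G(X)=\mathcal{F}((\exp X)_\#\mu_0)$ involves compositions with $\exp_p X_p$ and the Jacobian $\det(d\exp_p X_p)$, and one must check these depend analytically on $X\in W^{2,p}$ into, say, $W^{1,p}$; and the interaction term $\mathcal{W}$ contains the Coulomb-type singular kernel $W$, so $W*\rho_X$ (with $\rho_X$ the density of $\exp_{\mu_0}X$) must be handled using \textbf{(\ref{assump_A3})} to see that convolution still lands in a Banach algebra and varies analytically. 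The choice $p>n$ is exactly what makes $W^{1,p}\hookrightarrow C^0$ and $W^{2,p}$ an algebra, so that products, reciprocals (of the density, which stays close to $\rho_0>0$), logarithms, and the Newtonian convolution all preserve analyticity on a small ball; assembling these into a single analytic map is the technical heart of the proof. A secondary point requiring care is that $\mathcal{H}_{\mu_0}$ is only the $L^2_{\mu_0}$-closure of smooth gradient fields, so one must work on the dense subspace $\mathcal{H}_{\mu_0}\cap W^{2,p}$ with its stronger norm and confirm that all operators (in particular the projection onto $N$ and the solution operator of $L$ on $N^\perp$) respect this subspace, i.e.\ elliptic regularity upgrades $L^2$ solutions to $W^{2,p}$; this is where Lemma~\ref{lemma-regularityrho}-type estimates are invoked.
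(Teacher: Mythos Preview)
Your outline is correct and matches the paper's own proof: analyticity of the gradient map (Section~\ref{sec-analyticity}), Fredholmness of $L=d\mathcal{M}(0)$ (Section~\ref{sec-fredholm}), then a Lyapunov--Schmidt reduction to a finite-dimensional analytic function on $\ker L$ followed by the classical \L ojasiewicz inequality (Section~\ref{sec-LS}, following Jendoubi's simplification of Simon). The one technical wrinkle you underweight is that the paper proves analyticity of $\mathcal{M}$ as a map $\mathring{W}^{2,p}\to\mathring{L}^p$ (needed for the analytic inverse function theorem), and must then separately establish $L^2$-Lipschitz bounds on the small $W^{2,p}$-ball (Lemmas~\ref{lemma-M-L} and the estimates \eqref{LS_key_2}--\eqref{LS_key_3}) to pass from $\mathring{L}^p$ back to the $L^2_{\mu_0}$-norm appearing in \eqref{eq-GLSintro}.
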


In this statement, $\nabla_{L^2_{\mu_*}} G(X)$ denotes the gradient of $G$ at $X$ with respect to the $L^2_{\mu_*}$-inner product; see Definition \ref{definition-gradientmap} for details. Throughout the paper we denote this $L^2_{\mu_*}$-gradient by $\mathcal{M}(X)$. For a derivation of \L oajasiewicz-Simon inequality \eqref{eq-GLSintro}, the main technical steps are (1) to show the map $\mathcal{M}(\cdot)$ is analytic in properly chosen domain and codomain, and (2) the linearization of $\mathcal{M}$ at $0$, $d\mathcal{M}(0)$, which corresponds to the symmetric operator associated with the second variation of $G$ at $0$ is Fredholm and hence has a finite dimensional kernel. They are proved in Section \ref{sec-analyticity} and \ref{sec-fredholm}, respectively. In Section \ref{sec-proofmainthm} we prove that Theorem \ref{thm-LS-G} indeed implies Theorem \ref{thm-main-LS}.

\subsection{Examples and application}

\begin{example}[Keller-Segel equation]\label{example_KS}
On $\mathbb{T}^n$, the parabolic-elliptic Keller-Segel chemotaxis equations take the form
\begin{equation} \label{eq-KS} \begin{cases}
 \begin{aligned}
 \partial_t \rho  &= \Delta \rho -  \nabla\cdot ( \chi \rho \nabla c) \\
0&= \Delta c + (\rho- \bar \rho ),   	
 \end{aligned}
 \end{cases} 
  \text{or\, }  \begin{cases}
 \begin{aligned}
 \partial_t \rho  &= \Delta \rho -   \nabla\cdot (\chi \rho \nabla c) \\
0&= \Delta c -\alpha c + \rho, \, \alpha>0.    	
 \end{aligned}
 \end{cases}
\end{equation}
Here the chemotactic sensitivity $\chi$ and the parameter $\alpha>0$ are constants, and
$\bar{\rho}$ denotes the spatial average of $\rho$, normalized to $1$.
Let $G_0$ be the zero-mean Green function of $\Delta$, that is,
$\Delta G_0 = \delta_0 - 1$ in the sense of distributions and
$\int_{\mathbb{T}^n} G_0(z)\,dz = 0$.
Similarly, let $G_{\alpha}$ denote the Green function of $\Delta - \alpha$,
i.e.\ $\Delta G_{\alpha} - \alpha G_{\alpha} = \delta_0$ in the distributional sense.
Then both equations can be written as a McKean-Vlasov equation with interaction potential
$W = \chi G_{\beta}$ for $\beta = 0$ and $\beta = \alpha$, respectively.
In fact, this potential $W$ satisfies assumptions
\textbf{(\ref{assump_A2})}-\textbf{(\ref{assump_A3})} (for details, see Theorem~\ref{cor-KSeq}), and hence the main theorem applies
to these Keller-Segel equations, possibly with additional advection represented by an
analytic potential $V$. We also remark that the theorem applies to models involving multiple
chemoattractants and chemorepellents:
\[
\rho_t = \Delta \rho + \nabla \cdot (\rho \nabla V)
    - \sum_{i=1}^k \nabla \cdot (\chi_i \rho \nabla c_i),
\]
where each chemical concentration \( c_i \), $i=1,\ldots,k$, satisfies either
\[
0 = \Delta c_i + \rho - \bar{\rho},
\quad \text{or} \quad
0 = \Delta c_i - \alpha_i c_i + \rho.
\]
Here \( \chi_i \in \mathbb{R} \) and \( \alpha_i > 0 \).

\end{example}

\begin{example}[radial interaction potential]
Typical interaction potentials arising in applications are radial. 
On $\mathbb{R}^n$ one writes 
\[
W(x-y)=\omega\bigl(|x-y|^2\bigr)
\]
for some $\omega:[0,\infty)\to\mathbb{R}$. 
On the flat torus $\Tn$ equipped with the geodesic distance $d$, this becomes
\[
W(x-y)=\omega\bigl(d(x,y)^2\bigr),
\quad\text{equivalently,}\quad 
W(z)=\omega\bigl(d(z,0)^2\bigr)
\]
Such $W$ is centrally symmetric, so \textup{\textbf{(\ref{assump_A2})}} holds. If, in addition,
\[
\bigl|\omega^{(k)}(r)\bigr|\;\le\; k!\,C_\omega^{\,k+1}\, {r^{\frac{2-n}{2}-k}},
\qquad k=1,2,\ldots,
\]
then \textup{\textbf{(\ref{assump_A3})}} holds in a neighborhood of $0$. 
For instance, the power-law potentials $$W(z)=\sum_{i=1}^k L_i\,d(0,z)^{\gamma_i},$$ for $\gamma_i\ge 2-n$ and $L_i\in\mathbb{R}$, satisfy this local condition.

Globally on $\Tn$, however, \textup{\textbf{(\ref{assump_A3})}} fails in general as $d(\cdot,0)^2$ is not smooth across the cut locus of $0$, which coincides with the boundary of a fundamental domain. Nevertheless, this lack of smoothness is mild, and the proof of the main theorem adapts with minor modifications. See Theorem~\ref{thm-radialpotential} for its details.
\end{example}

\begin{remark}[on the analyticity assumption] \label{remark-analyticity}
The analyticity assumptions in \textbf{(\ref{assump_A1})} and \textbf{(\ref{assump_A3})} may appear restrictive at first glance; however, they constitute a key ingredient in the derivation of the {\L}ojasiewicz inequality used in this paper. In general, a bounded trajectory of a gradient flow may fail to converge to a critical point (stationary solution), even for a smooth energy functional. We refer to \cite{doi:10.1137/040605266} for counterexamples in the Euclidean setting and \cite{10.4310/jdg/1214459844,POLACIK1996472,POLACIK2002586} for PDE gradient flows.

Analogous to these results, it is an interesting open problem to determine whether there exist non-analytic potentials $V$ and $W$ that admit bounded trajectories with no convergence in our setting. In the specific case where only the confinement potential $\mathcal{V}$ is present (i.e., no diffusion $\mathcal{U}$ and interaction $\mathcal{W} \equiv 0$), one can construct such an example using \cite{doi:10.1137/040605266}, since the solution is given by the push-forward of the initial measure under the flow generated by the vector field $-\nabla V$. 
On a compact Riemannian manifold, solutions to the linear Fokker-Planck equation (involving $\mathcal{U}$ and $\mathcal{V}$ but with $\mathcal{W} \equiv 0$) converge exponentially fast to the unique stationary state $\mu \propto e^{-V(x)} \, dx$, without requiring any convexity assumption on $V$. This follows from a Poincar\'e inequality (spectral gap) or log-Sobolev inequality combined with the bounded perturbation principle; see e.g. Section 4.2 or 5.1-5.2 in \cite{MR3155209}.
\end{remark}

\textit{Organization of the paper:}
The paper is organized as follows: In Section 2, we characterize the $\cH$-gradient map of $G$, denoted by $\cM$, and investigate the analyticity of $\cM$ between function spaces of suitable regularity in a neighborhood of $0$. In Section 3, we show that the second-order linearization of $\cM$, denoted by $L$, is a Fredholm operator.
In Section 4, we establish the {\L}ojasiewicz inequality for $G$, based on the results of Section 2 and 3.
In Section 5, we prove our main results, Theorem \ref{thm-main-LS}, Theorem \ref{thm-mainconvergence} and Corollary \ref{corollary-rate}. In Section 6, we address three applications of the main theorems-namely, an improvement of the main result under stronger assumptions, an application to chemotaxis models, and the study of radial interaction.

\section{Analyticity of gradient map} \label{sec-analyticity}
\subsection{Setting}
In the remaining of this paper, unless otherwise mentioned, we assume $\mu_*=\rho_*dx \in \cP^{ac}_{2}$, $\rho_* \in W^{1,p}$ some $p>n$ and $0<C^{-1} \le \rho_*\le C$ on $\Tn $ for some $C<\infty$.
\begin{definition}[$W^{k,p}$-gradient vector fields]
For $\mu_*\in \cP^{ac}_2$ and $p\in(1,\infty)$,  we denote by $$ \mathring{W} ^{k,p}=  \overline{\{\nabla \Psi \,:\, \Psi \in C^\infty (\Tn ,\mathbb{R})\}}^{W_{\mu_*}^{k,p}(\Tn  )},$$ the Banach space of gradient vector fields equipped with $W_{\mu_*}^{k,p}$-norm
\[\Vert h\Vert_{W^{k,p}_{\mu_*}} = \bigg(\sum _{|\alpha| \le k} \int _{\Tn} |\nabla ^\alpha h |^p d\mu_*\bigg)^{1/p}. \]
If $k=0$, we denote $\mathring{L}^p:=\mathring{W}^{0,p}$.
\end{definition} In view of the Poincare inequality, we have that  if $\Phi\in \mathring W^{k,p}$ for some $k\in \mathbb{N}_0$ and $p\in(1,\infty)$, then $\Phi = \nabla \phi$ for some $\phi \in W^{k+1,p}$. Such a $\phi$ is unique up to addition of a constant.

\medskip

In this section, we consider $G:\mathring{W}^{2,p}\to \mathbb{R}$ defined by 
\[ G(\Phi)= \mathcal{F}((\exp \Phi )_\# \mu_*) .\]
Recall $(\exp \Phi) (x) := x+\Phi(x) $, where we view $x\mapsto x+\Phi(x)$ as a map on $\mathbb{R}^n/\mathbb{Z}^n=\Tn$. By viewing $G$ as a map defined on a subspace of the Hilbert space $\mathcal{H}_{\mu_*}$, we may define the gradient map: 

\begin{definition}[gradient map c.f. Definition 2.1.1  \cite{MR2226672}] \label{definition-gradientmap} For $p\in[2,\infty)$, the gradient of $G$ at $\Phi \in \mathring{W}^{2,p}$ is defined as a vector field in $\mathring{L}^p$, namely $\mathcal{M}(\Phi)\in \mathring{L}^p$, which satisfies \[ \langle  \mathcal{M}(\Phi),\Psi \rangle _{\mathcal{H}_{\mu_*}} = dG(\Phi)[\Psi] \quad \Big(= \frac{d}{dt}\bigg\vert_{t=0} G(\Phi+t\Psi) \Big)  ,\]
for all  $\Psi\in  \mathring{W}^{2,p} .$    
\end{definition}
If a gradient exists at $\Phi$, then it is necessarily unique from the definition. The main theorem in this section shows that $\mathcal{M}$ is an analytic map.

\begin{theorem}[analyticity of gradient map]\label{thm-analyticity2} For $p>n$, there is a neighborhood of $0\in \mathring{W}^{2,p}$, namely $U$, such that the gradient map $\mathcal{M}:\mathring{W}^{2,p}\supset
U \to \mathring{L}^p$ is well-defined and it is analytic on $U$.
    
\end{theorem}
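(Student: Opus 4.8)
\textbf{Proof proposal for Theorem~\ref{thm-analyticity2}.}

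The plan is to compute the gradient map $\mathcal{M}$ explicitly and then verify analyticity term by term, exploiting that $\mathcal{F} = \mathcal{U} + \mathcal{V} + \mathcal{W}$ decomposes into three pieces, each of which pulls back to something built from elementary analytic operations (composition, multiplication, convolution with an analytic-but-mildly-singular kernel). First I would fix $\Phi \in \mathring{W}^{2,p}$ with $\|\Phi\|_{W^{2,p}}$ small; since $p > n$, the Morrey embedding gives $\Phi \in C^{1,\alpha}$, so $\exp\Phi(x) = x + \Phi(x)$ is a $C^1$-diffeomorphism of $\Tn$ with Jacobian $J_\Phi = \det(I + D\Phi)$ bounded away from $0$ and $\infty$ on the chosen neighborhood $U$. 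Then $(\exp\Phi)_\#\mu_0$ has density $\rho_\Phi = \big((\rho_0 / J_\Phi)\circ (\exp\Phi)^{-1}\big)$, and I would instead use the change-of-variables form of $\mathcal{F}$ that avoids inverting the map:
\begin{equation*}
G(\Phi) = \int_{\Tn} \log\!\Big(\frac{\rho_0(x)}{J_\Phi(x)}\Big)\,d\mu_0(x) + \int_{\Tn} V(x+\Phi(x))\,d\mu_0(x) + \frac12 \iint_{\Tn\times\Tn} W\big(x+\Phi(x) - y - \Phi(y)\big)\,d\mu_0(x)\,d\mu_0(y).
\end{equation*}
Differentiating in $\Phi$ in the direction $\Psi = \nabla\psi$ and integrating by parts to move all derivatives onto test functions, one identifies $\mathcal{M}(\Phi)$ as the $L^2_{\mu_0}$-orthogonal projection onto $\mathcal{H}_{\mu_0}$ of the vector field $x \mapsto \nabla\big(\log\rho_\Phi + V + W*\rho_\Phi\big)\big(\exp\Phi(x)\big)$ pushed back along $\exp\Phi$; equivalently, $\mathcal{M}(\Phi) = P_{\mu_0}\big[(D\exp\Phi)^{T}\,(\nabla \tfrac{\delta\mathcal{F}}{\delta\rho})(\rho_\Phi)\circ\exp\Phi\big]$, where $P_{\mu_0}$ is the Helmholtz projection in $L^p_{\mu_0}$. (I would record this as a lemma, since it is the object appearing in Theorems~\ref{thm-main-LS} and~\ref{thm-LS-G}.)

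Next I would establish analyticity by writing $\mathcal{M}$ as a composition of maps between Banach spaces, each shown to be analytic (i.e.\ locally given by a convergent power series, equivalently $C^\infty$ with uniformly bounded multilinear derivatives, cf.\ the Cauchy-estimate criterion). The building blocks are: (i) $\Phi \mapsto D\Phi$, linear hence analytic $\mathring{W}^{2,p} \to W^{1,p}$; (ii) $A \mapsto \det(I+A)$ and $A \mapsto (I+A)^{-1}$, analytic on a neighborhood of $0$ in the matrix-valued $W^{1,p}$ functions (using that $W^{1,p}$, $p>n$, is a Banach algebra and $\det$, inversion are polynomial/rational); (iii) the Nemytskii/superposition operators $u \mapsto F\circ u$ for $F$ analytic on a neighborhood of the range — here $F = \log$ on $(0,\infty)$ for the entropy term, $F = V$ and $F = \nabla V$ for the confinement term, which are analytic by \textbf{(\ref{assump_A1})}; analyticity of such superposition operators on $W^{1,p}$ with $p>n$ follows from the algebra property together with the Cauchy estimates $\|\nabla^k F\|_\infty \le C^{k+1}k!$, giving a convergent majorant series (this is the standard mechanism behind e.g.\ \cite[Sec.~3]{chill2010gradient}, \cite{MR2226672}); (iv) the pull-back-of-functions map $(u,\Phi)\mapsto u\circ\exp\Phi$, which I would handle via a Taylor-with-integral-remainder expansion in $\Phi$, $u(x+\Phi(x)) = \sum_{k} \tfrac1{k!}\nabla^k u(x)[\Phi(x)^{\otimes k}]$, convergent in $W^{1,p}$ when $u$ is itself given by analytic data and $\|\Phi\|_{W^{2,p}}$ is small (this is where $\mathring{W}^{2,p}$ rather than $\mathring{W}^{1,p}$ is used, so that $\Phi\in C^{1,\alpha}$ and $u\circ\exp\Phi \in W^{1,p}$); and (v) the Helmholtz projection $P_{\mu_0}$, which is a bounded linear — hence analytic — operator on $\mathring{L}^p$ (and maps $W^{1,p}$-vector fields appropriately), using $0 < C^{-1}\le\rho_0\le C$ and $\rho_0\in W^{1,p}$ so that the associated weighted divergence-form equation $\Div(\rho_0\nabla\psi) = \Div(\rho_0 h)$ has $L^p$-elliptic estimates.

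The one genuinely delicate ingredient — and the step I expect to be the main obstacle — is the interaction term $\mathcal{W}$, because the convolution kernel $\nabla W$ is singular of Coulomb order $|z|^{-(n-1)}$ and all its derivatives inherit worse singularities controlled only by \textbf{(\ref{assump_A3})}. I would handle $\rho_\Phi \mapsto W*\rho_\Phi$, and more precisely the map $\Phi \mapsto \big[x \mapsto \nabla(W*\rho_\Phi)(\exp\Phi(x))\big]$, by writing it as a double integral over $\Tn\times\Tn$ against $W(x+\Phi(x)-y-\Phi(y))$ and its gradient, then Taylor-expanding $W$ and $\nabla W$ in the displacement increment $\Phi(x)-\Phi(y)$ around the base value $x-y$. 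The $k$-th term involves $\nabla^{k+1}W(x-y)$, whose singularity $|x-y|^{-(n-1+k)}$ is, however, multiplied by $(\Phi(x)-\Phi(y))^{\otimes k}$; since $\Phi\in C^{0,1}$ (indeed $C^{1,\alpha}$), one has $|\Phi(x)-\Phi(y)| \le \|\nabla\Phi\|_\infty |x-y|$, so each power of the singularity is compensated by a power of $|x-y|$, leaving a uniformly Coulomb-type kernel $\sim |x-y|^{-(n-1)}$ times $\|\nabla\Phi\|_\infty^k$. The resulting kernel is integrable on $\Tn$ (as $n-1 < n$), the factorials in \textbf{(\ref{assump_A3})} combine with the $1/k!$ from Taylor to give a geometric majorant in $C_W\|\nabla\Phi\|_\infty$, and weak-Young / Calderón–Zygmund-type estimates on the torus give boundedness of each multilinear term $\mathring{W}^{2,p}\times\cdots\times\mathring{W}^{2,p}\to W^{1,p}$ with geometrically decaying norms. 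This yields a convergent power series for the $\mathcal{W}$-contribution to $\mathcal{M}$ on a small ball of $\mathring{W}^{2,p}$. Combining (i)–(v) with this interaction estimate by the analyticity-of-compositions lemma, and shrinking $U$ so that all the partial neighborhoods overlap, proves that $\mathcal{M}: U \to \mathring{L}^p$ is well-defined and analytic.
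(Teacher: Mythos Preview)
Your approach is essentially the same as the paper's: both write $G(\Phi)$ in change-of-variables form, identify $\mathcal{M}(\Phi)$ as the Helmholtz projection $P_{\mu_0}$ of an explicit vector field with three pieces coming from $\widetilde{\cU},\widetilde{\cV},\widetilde{\cW}$, and verify the Cauchy bounds $\|d^k\mathcal{M}(\Phi)\|\le C^{k+1}k!$ using (a) the Banach-algebra property of $W^{1,p}$ for the entropy and confinement pieces, and (b) the Lipschitz cancellation $|\Phi(x)-\Phi(y)|\lesssim\|\nabla\Phi\|_\infty\,d(x,y)$ to tame the growing singularity of $\nabla^{k+1}W$ in the interaction piece. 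The paper computes the $k$-th Gateaux derivatives explicitly and bounds them directly, whereas you package the same estimates as analyticity-of-compositions; this is a cosmetic difference, and your treatment of the $\mathcal{W}$-term is exactly the paper's mechanism.

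One slip to flag: your ``equivalently'' formula $\mathcal{M}(\Phi)=P_{\mu_0}\big[(D\exp\Phi)^T(\nabla\tfrac{\delta\mathcal{F}}{\delta\rho})(\rho_\Phi)\circ\exp\Phi\big]$ carries an extraneous Jacobian factor. The first variation of $\widetilde{\cV}(\Phi)=\int V(x+\Phi(x))\,d\mu_0$ in the direction $\xi$ is $\int\nabla V(x+\Phi(x))\cdot\xi(x)\,d\mu_0$, with no $(I+D\Phi)^T$; likewise for $\widetilde{\cW}$, while the entropy piece takes yet a different form, $P_{\mu_0}\big[\rho_0^{-1}\Div(\rho_0(I+\nabla\Phi)^{-1})\big]$, which is not literally $(\nabla\log\rho_\Phi)\circ\exp\Phi$ either. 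This does not damage the analyticity argument (multiplying by the analytic factor $(I+D\Phi)^{\pm T}$ preserves analyticity), but you should record the correct expressions, as they are used downstream for the linearization $L=d\mathcal{M}(0)$. Also, for the $\mathcal{W}$-derivatives you only need the integrability of $|z|^{-(n-1)}$ on $\Tn$, not Calder\'on--Zygmund; the target is $\mathring{L}^p$, not $W^{1,p}$.
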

We recall the definition \begin{definition}[analytic function]
Let $X$, $Y$ be Banach spaces and let $U\subset X$ be open.
A map $F:U\to Y$ is said to be \emph{analytic at} $x_0\in U$
if there exist bounded symmetric $k$-linear maps $T_k:X^k\to Y$ for $k\in\mathbb{N}_0$
and a radius $r>0$ such that  $
\sum_{k=0}^\infty \|T_k\|\,r^k < \infty$, 
and
\[
F(x)
= \sum_{k=0}^\infty
T_k[\underbrace{x-x_0, \ldots, x-x_0}_{k\ \text{times}}],
\]
for all  $x\in U$ with  $\|x-x_0\|_X<r$. We say that $F$ is analytic on $U$ if it is analytic at every $x_0\in U$.
\end{definition}

\bigskip
Let us write three energies of $G$ separately as 
\[G(\Phi)=\widetilde{\cU}(\Phi) +  \widetilde{\cV}(\Phi)+ \widetilde {\cW}(\Phi) ,\]
where \begin{equation}\label{eq-Genergies}
\begin{aligned}&\widetilde{\cU}(\Phi):= \int \Big( \log (\rho_*(x)) - \log \det (I_n+ \nabla \Phi(x) )\Big)\,   d\rho_*(x), \\
& \widetilde{\cV}( \Phi):=   \int V(x+\Phi(x))\,  d\rho_*(x) ,    \\
&\widetilde{\cW}(\Phi):= \frac{1}{2} \iint  W( x+ \Phi(x)- y- \Phi(y)  )\,  d\rho_*(x) d\rho_*(y).  
\end{aligned}\end{equation}
In the derivation of $\widetilde{\cU}$, we used $(id+\Phi)_\#\rho_* dx = \frac{\rho_*\circ  ( id+\Phi)^{-1}}{\det (I_n+ \nabla \Phi )\circ ( id+\Phi)^{-1}} dx $.
We will prove the analyticity of gradient map of three energies separately. 

\begin{lemma}[Helmholtz decomposition] \label{lemma-Helmholtz}  For $\gamma \in(1,\infty)$, given $L^\gamma _{\mu_*}$-gradient vector field $X \in L^\gamma _{\mu_*}(\Tn, T\Tn)$ can uniquely be decomposed as
\[X= \Phi+X_0, \]
for some  $\Phi\in \mathring{L}^\gamma $ and  $\mu_*$-divergence free vector field $X_0$, i.e., $\mathrm{div}(\rho_* X_0)=0$ in the sense of distribution. Moreover, $X\mapsto  \Phi$ is bounded linear map from $L^\gamma _{\mu_*}$ to $\mathring{L}^\gamma _{\mu_*}$, which is explicitly given as 
\[\Phi= \nabla \, (\Div (\rho_* \nabla))^{-1} \, \Div (\rho_* X).\]
    \begin{proof} In view of $\rho_*\in L^\infty$,  $J:=\Div (\rho_* \nabla ): W^{1,\gamma } \to W^{-1,\gamma } $ and it is a uniformly elliptic operator which has $\ker J = \mathrm{span}\{ 1\}$ by the strong maximum principle. By the fredholm theory,  $\mathrm{Im\,}J= \{ f \in W^{-1,\gamma}\,:\, \langle 1,f \rangle_{(W^{-1,\gamma})^*\times W^{-1,\gamma}} =0\}$ is closed and   $J^{-1}: \mathrm{Im\,}J \to W^{1,\gamma}/\ker J$ is well-defined linear map which is bounded if the later space is equipped with the quotient norm. This shows $X\mapsto \Phi:=\nabla J^{-1} \Div (\rho_* X)$ is a bounded linear map from $L^{\gamma}_{\mu_*}$ to $\mathring{L}^\gamma$.  It is immediate to check $X-\Phi$ is $\mu_*$-divergence free.

    Finally, to check the uniqueness of decomposition, suppose $X=\Phi_1+X_1=\Phi_2+X_2$, where $\Phi_i=\nabla \phi_i$. Taking divergence, $\Div (\rho_*\nabla(\phi_1 -\phi_2)) =0 $ as an element in $W^{-1,\gamma}$. This shows $\phi_1-\phi_2$ is a constant and hence $\Phi_1=\Phi_2$.
    \end{proof}
\end{lemma}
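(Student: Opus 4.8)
The plan is to reduce the whole statement to the $L^p$-solvability theory of the single scalar divergence-form operator $J := \Div(\rho_0\nabla)$ on the closed manifold $\Tn$, together with elementary manipulations exploiting the two-sided bound on $\rho_0$; the decomposition then becomes the weighted $L^p$ Helmholtz decomposition with the asserted explicit projector. \emph{Step 1 (the operator $J$).} Since $\rho_0\in W^{1,p}$ with $p>n$, Morrey's embedding gives $\rho_0\in C^{0,\alpha}(\Tn)$ with $\alpha=1-n/p$, and by the standing assumption $0<C^{-1}\le\rho_0\le C$ the coefficient is uniformly elliptic and continuous. Hence $J:W^{1,q}(\Tn)\to W^{-1,q}(\Tn)=(W^{1,q'}(\Tn))^*$ (with $1/q+1/q'=1$) is a bounded Fredholm operator of index $0$ for every $q\in(1,\infty)$, by the standard $L^q$-theory for divergence-form elliptic operators (for $q=2$ this is Lax--Milgram together with the Poincar\'e inequality on $\Tn$; for general $q$ it is the Calder\'on--Zygmund estimate for continuous coefficients combined with the compactness of $W^{1,q}(\Tn)\hookrightarrow L^q(\Tn)$). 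Its kernel consists of the $W^{1,q}$ weak solutions of $\Div(\rho_0\nabla u)=0$; by elliptic regularity such $u$ lies in $C^{1,\alpha}$, hence in $W^{1,2}$, and testing the equation against $u$ gives $\int_{\Tn}\rho_0|\nabla u|^2=0$, so $u$ is constant (alternatively, invoke the strong maximum principle). Thus $\ker J=\mathrm{span}\{1\}$, and since $J$ is formally self-adjoint its adjoint is given by the same formula on $W^{1,q'}$, so $\ker J^*=\mathrm{span}\{1\}$ as well. The closed-range theorem then yields $\mathrm{Im}\,J=\{f\in W^{-1,q}:\langle 1,f\rangle=0\}$, closed of codimension one, and a well-defined bounded linear bijection $J^{-1}:\mathrm{Im}\,J\to W^{1,q}/\mathrm{span}\{1\}$.

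\emph{Step 2 (construction and uniqueness).} Given a vector field $X\in L^p_{\mu_0}(\Tn,T\Tn)$, the bound $\rho_0\le C$ yields $\rho_0X\in L^p(\Tn,T\Tn)$ with $\|\rho_0X\|_{L^p}\le C\|X\|_{L^p_{\mu_0}}$, hence $\Div(\rho_0X)\in W^{-1,p}$; and $\langle 1,\Div(\rho_0X)\rangle=-\int_{\Tn}\rho_0X\cdot\nabla 1=0$, so $\Div(\rho_0X)\in\mathrm{Im}\,J$ by Step 1. Let $\phi\in W^{1,p}$, unique up to a constant, solve $\Div(\rho_0\nabla\phi)=\Div(\rho_0X)$, and set $\Phi:=\nabla\phi$, $X_0:=X-\Phi$. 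Approximating $\phi$ by $C^\infty$ functions in $W^{1,p}$ and using that the $L^p$- and $L^p_{\mu_0}$-norms are equivalent (two-sided bound on $\rho_0$) shows $\Phi\in\mathring{L}^p$, while $\Div(\rho_0X_0)=\Div(\rho_0X)-\Div(\rho_0\nabla\phi)=0$, so $X_0$ is $\mu_0$-divergence free. The map $X\mapsto\Phi$ factors as $X\mapsto\rho_0X\mapsto\Div(\rho_0X)\mapsto J^{-1}\Div(\rho_0X)\mapsto\nabla(\cdot)$, each factor bounded and linear by the above and Step 1, which is precisely the asserted formula $\Phi=\nabla\,(\Div(\rho_0\nabla))^{-1}\,\Div(\rho_0X)$, with values in $\mathring{L}^p\subset L^p_{\mu_0}$. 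For uniqueness, if $X=\Phi_1+X_1=\Phi_2+X_2$ with $\Phi_i\in\mathring{L}^p$ and $\Div(\rho_0X_i)=0$, write $\Phi_i=\nabla\phi_i$ with $\phi_i\in W^{1,p}$ (the Poincar\'e remark following the definition of $\mathring{W}^{k,p}$); then $\nabla(\phi_1-\phi_2)=X_2-X_1$, and applying $\Div(\rho_0\,\cdot\,)$ gives $J(\phi_1-\phi_2)=0$, so $\phi_1-\phi_2$ is constant and $\Phi_1=\Phi_2$, $X_1=X_2$.

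\emph{Main obstacle.} The only ingredient beyond bookkeeping is the $L^p$-solvability in Step 1: the divergence-form operator $\Div(\rho_0\nabla)$, whose coefficient $\rho_0$ is only H\"older-continuous rather than smooth, must be shown to be Fredholm of index $0$ from $W^{1,p}$ onto the codimension-one subspace of $W^{-1,p}$, with bounded inverse on the quotient, for $p$ possibly far from $2$. For $p=2$ this is immediate from Lax--Milgram; the general case rests on the Calder\'on--Zygmund $L^p$-estimate for divergence-form equations with continuous (indeed VMO) coefficients, which I would invoke as a standard fact. Everything else — moving $\rho_0$ in and out of norms via its two-sided bound, identifying $\mathring{L}^p$ with $\{\nabla\phi:\phi\in W^{1,p}\}$, and the one-line uniqueness argument — is entirely routine.
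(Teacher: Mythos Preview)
Your proof is correct and follows essentially the same approach as the paper: both reduce the decomposition to the $L^p$-solvability of the scalar operator $J=\Div(\rho_0\nabla):W^{1,p}\to W^{-1,p}$, identify $\ker J=\mathrm{span}\{1\}$ and $\mathrm{Im}\,J=\{f:\langle 1,f\rangle=0\}$ via Fredholm theory, construct $\Phi=\nabla J^{-1}\Div(\rho_0 X)$, and prove uniqueness by applying $\Div(\rho_0\,\cdot\,)$ to the difference $\Phi_1-\Phi_2$. You supply more detail than the paper does (the H\"older regularity of $\rho_0$, the Calder\'on--Zygmund justification for $p\neq 2$, and the explicit check that $\Div(\rho_0 X)$ annihilates constants), but the skeleton is identical.
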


 It is useful to define the projection $X\mapsto \Phi$ appeared in Lemma \ref{lemma-Helmholtz}.
\begin{definition} [projection onto gradient vector fields]For $\gamma\in(1,\infty)$, we define the projection map
\[P_{\mu_*}:L^\gamma_{\mu_*}\to \mathring{L}^\gamma_{\mu_*},\]
by $P_{\mu_*}(X)= \nabla \, (\Div (\rho_* \nabla))^{-1} \, \Div (\rho_* X) $. 
The map is bounded 
\begin{equation}\label{eq-bddproj}\Vert P_{\mu_*} X \Vert _{L^\gamma_{\mu_*}} \le C {\Vert X\Vert _{L^\gamma_{\mu_*}}}
\end{equation}for some $C=C(\mu_*,\gamma)<\infty $.  
	
\end{definition}

First, we prove $G$ is Gateaux differentiable on a neighborhood of $0$. 
\begin{lemma} \label{lemma-1stvariation}For $p>n$, there is $\sigma_0=\sigma_0(n,p, \rho_*)>0$ such that for $\Phi$, $\xi\in \mathring{W}^{2,p}$ with $\Vert \Phi\Vert_{\mathring{W}^{2,p}} \le \sigma_0$, 
\begin{equation}\label{eq-1stvariation}\begin{aligned} d\widetilde{\cU}(\Phi)[\xi]  &= \int   \Div (\rho_* [(I+\nabla \Phi )^{-1}])(x) \cdot \xi(x) \, dx , \\  
 d\widetilde{\cV}(\Phi)[\xi] &= \int \nabla V(x+\Phi(x)) \cdot \xi (x) \,d\rho_*(x) , \\
d\widetilde{\cW}(\Phi)[\xi]&= \iint \nabla W(x+\Phi(x)-y-\Phi(y)) \cdot \xi(x) \, d\rho_*(y)d\rho_*(x).  \end{aligned}\end{equation} 
	Note that $I+\nabla_i \Phi_j(x)=I +\nabla _j \Phi_i(x)  $ for all $i$ and $j$, and therefore there is no ambiguity in the expression involving divergence.    

\begin{remark}
In \cite[Sec.~3 and~4]{MR2053570}, essentially the same computations as above were carried out in the smooth (formal) setting.
\end{remark}

\begin{proof}
Since $\Phi = \nabla \phi$ and $\xi = \na \zeta$ for some $W^{3,p}$ functions $\phi, \zeta$ and  $p>n$, $\phi\in C^{2,\beta}$ some $\beta(p,n)>0$. Therefore, $\nabla \Phi \in C^{0,\beta}$ and $\nabla_i \Phi_j (x) = \nabla _j \Phi_i (x)$ for all $x\in \Tn$ and $i$, $j$. The same holds for $\xi$. Choose $\sigma_0>0$  so that  \begin{equation}\label{eq-smallness}| \nabla _{i}\Phi_j(x) \eta _i\eta _j  | \le \frac 12 |\eta |^2  ,\end{equation} for all $x\in\Tn$ and $\eta \in \mathbb{R}^n$.  	Since $(I+\na \Phi)$ is invertible (pointwise), we obtain, for all $x\in \Tn$,  
$$\frac{d}{dt}\bigg|_{t=0}\log  (\det A_t) = \Tr(A_0^{-1}\dot{A}_0), \quad A_t=I_n + \na \Phi + t \na \xi.
$$ 
Moreover, there exists $t_0>0$ such that if $|t|<t_0$, then
$$
\Big|\frac{ \log \det(I_n + \na \Phi + t \na \xi) - \log \det (I_n + \na \Phi) }{t} \Big|
\leq C \Tr (|\na \xi|),
$$ uniformly on $x\in \Tn$. 
By the dominated convergence theorem,
\begin{align}
&\frac{d}{dt}\Big|_{t=0} \tcU(\na \phi + t \na \zeta)
 =-\int \Tr \big((I_n + \na^2 \vphi)^{-1} \na^2 \z \big) d\rho_*
 \nn \\
&=-\int A_{ij} \z_{ji} d\rho_*=\int  \p_j(A_{ij}\rho_*)\z_i   dx, 
\nn
\end{align}	
 where $A_{ij}:= [(I_n+\na^2 \phi)^{-1}]_{ij}$. This shows the first result on $d \tcU$.


Next, since $V$ is smooth on $\Tn$, the second result on $d\tcV$ follows directly from differentiating \eqref{eq-Genergies}.

Finally, we investigate $d\tcW$. As \eqref{eq-smallness}, there exists $t_1=t_1(\Vert\xi\Vert_{\mathring{W}^{2,p}})>0$ such that if $|t| <t_1$, then $|\nabla (t\xi)| \le 1/4$ on $\Tn$. For $x,y \in \mathbb{R}^n$, using a geodesic between $x$ and $y$ in $\Tn$, we obtain for $|t|< t_1$
\[ |x+\Phi(x)+t\xi(x)-y-
\Phi(y)-t \xi (y)|_T \ge \frac 14 |x-y|_T. \]
Here $|z|_T= \min_{z'
\in \mathbb{Z}^n} |z+z'|$ (and thus $d(x,y)= |x-y|_T$.)  By the assumption 
\textbf{(A3)}, $|\nabla W(z)| \le C|z|_T^{1-n}$, and hence for all $x\neq y$ in $\Tn$
\[ \begin{aligned} &\big|W(x+ \Phi(x) +t \xi(x) -y -\Phi(y)-t \xi(y)) - W(x+ \Phi(x) -y -\Phi(y))\big| \\
&\le  \Big |\int_{0}^t \nabla W(x+\Phi(x) +s \xi(x)- y-\Phi(y)-s \xi(y))ds \Big||\xi(x)-\xi(y)| \\
&\le C|t| |x-y|_T^{1-n}|\xi(x)-\xi(y)|. 
\end{aligned}\]

Note that $\iint |x-y|_T^{1-n} d\rho_*(x)d\rho_*(y)<\infty$.  The dominated convergence theorem implies that $\frac{d}{dt}\Big|_{t=0} \tcW(\Phi + t \xi)$ is equal to 
\begin{align}\label{eq-5589}
\frac{1}{2}\iint  \na W(x+ \Phi(x) - y- \Phi(y)) \cdot (\xi(x) - \xi(y))  d\rho_*(x) d\rho_*(y),
\end{align}	
which yields the last desired result due to \textbf{(A2)}.



\end{proof}
\end{lemma}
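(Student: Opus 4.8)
The plan is to obtain each of the three first variations by differentiating the integrand pointwise in $t$ and then justifying passage of the derivative under the integral, followed by an integration by parts in the internal-energy term; the only genuinely delicate point is the singular interaction kernel. I would begin by recording the regularity at hand: since $\Phi\in\mathring{W}^{2,p}$ one has $\Phi=\na\phi$ with $\phi\in W^{3,p}$, and similarly $\xi=\na\z$ with $\z\in W^{3,p}$; as $p>n$, Sobolev embedding gives $\phi,\z\in C^{2,\bt}$ for some $\bt=\bt(n,p)>0$, so $\na\Phi$ and $\na\xi$ are continuous symmetric matrix fields on $\Tn$ — the symmetry being exactly what makes $\Div(\rho_0(I_n+\na\Phi)^{-1})$ unambiguous. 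I would then fix $\sigma_0>0$ small enough that $\Vert\Phi\Vert_{\mathring{W}^{2,p}}\le\sigma_0$ forces the uniform bound $|\na_i\Phi_j(x)\,\eta_i\eta_j|\le\tfrac12|\eta|^2$ for all $x\in\Tn$, $\eta\in\R^n$; this makes $I_n+\na\Phi$ uniformly invertible with a uniform bound on its inverse, and, after a further smallness restriction on $|t|$, keeps the perturbed interaction displacement from collapsing toward the diagonal.

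For $\tcU$ I would differentiate $t\mapsto\log\det(I_n+\na\Phi+t\na\xi)$ using Jacobi's formula, which at $t=0$ gives the pointwise value $-\Tr\big((I_n+\na\Phi)^{-1}\na\xi\big)$; to pass the limit under $\int(\cdot)\,d\rho_0$ it suffices to bound the difference quotient uniformly in $x$, for $|t|$ below some $t_0$, by $C\,\Tr|\na\xi|$, using the uniform invertibility just arranged, and then apply dominated convergence. Writing $A_{ij}=[(I_n+\na^2\phi)^{-1}]_{ij}$ and integrating by parts in $x$ turns $-\int A_{ij}\z_{ji}\,d\rho_0$ into $\int\p_j(A_{ij}\rho_0)\,\z_i\,dx=\int\Div\big(\rho_0(I_n+\na\Phi)^{-1}\big)\cdot\xi\,dx$, the first claimed identity. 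The confinement term is immediate: $V\in C^\infty(\Tn)$, so one differentiates under the integral with dominating function $\Vert\na V\Vert_{L^\infty}\,|\xi|$, obtaining $\int\na V(x+\Phi(x))\cdot\xi(x)\,d\rho_0(x)$.

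The interaction term $\tcW$ is the main obstacle, since $\na W$ carries a Coulomb-type singularity $|\na W(z)|\le C|z|_T^{1-n}$ by \textbf{(A3)}, so differentiation under the double integral is not automatic. The crucial point is a non-collapse estimate: choosing $t_1>0$ with $\Vert\na(t\xi)\Vert_{L^\infty}\le\tfrac14$ whenever $|t|<t_1$, a mean-value argument for the map $z\mapsto z+\Phi(z)+t\xi(z)$ on $\R^n$ together with the $\Z^n$-periodicity of $\Phi,\xi$ gives $|x+\Phi(x)+t\xi(x)-y-\Phi(y)-t\xi(y)|_T\ge\tfrac14|x-y|_T$ for all $x,y$. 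Combining this with the fundamental theorem of calculus and the kernel bound shows the $t$-difference quotient of $W(x+\Phi(x)+t\xi(x)-y-\Phi(y)-t\xi(y))$ is dominated by $C|x-y|_T^{1-n}\,|\xi(x)-\xi(y)|$; since $\rho_0$ is bounded and $\iint|x-y|_T^{1-n}\,dx\,dy<\infty$ on the torus (because $1-n>-n$), this is an integrable majorant, so dominated convergence yields $\frac{d}{dt}\big|_{t=0}\tcW(\Phi+t\xi)=\tfrac12\iint\na W(x+\Phi(x)-y-\Phi(y))\cdot(\xi(x)-\xi(y))\,d\rho_0(x)d\rho_0(y)$. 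Finally I would invoke central symmetry \textbf{(A2)}, which makes $\na W$ odd: relabelling $x\leftrightarrow y$ shows the $-\xi(y)$ term contributes exactly as much as the $\xi(x)$ term, so the factor $\tfrac12$ cancels and the third identity follows. The care required throughout is simply to choose $\sigma_0$ and the auxiliary thresholds $t_0,t_1$ so that all the smallness and non-collapse bounds hold simultaneously; the non-collapse estimate combined with the integrability of $|x-y|_T^{1-n}$ is precisely what turns the singular case into a routine dominated-convergence argument.
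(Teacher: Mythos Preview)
Your proposal is correct and follows essentially the same approach as the paper: Sobolev embedding to obtain $C^{2,\bt}$ regularity of $\phi,\z$, a smallness choice of $\sigma_0$ ensuring uniform invertibility of $I_n+\na\Phi$, Jacobi's formula plus dominated convergence and integration by parts for $\tcU$, direct differentiation for $\tcV$, and for $\tcW$ the non-collapse bound $|x+\Phi(x)+t\xi(x)-y-\Phi(y)-t\xi(y)|_T\ge\tfrac14|x-y|_T$ combined with \textbf{(A3)} to dominate the difference quotient by $C|x-y|_T^{1-n}|\xi(x)-\xi(y)|$, followed by \textbf{(A2)} to absorb the factor $\tfrac12$. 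The paper phrases the non-collapse step as integrating $\na\Phi+t\na\xi$ along a geodesic in $\Tn$ rather than a mean-value argument on $\R^n$ with periodicity, but this is the same computation.
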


\begin{remark} \label{eq-W2grad}
    The previous proof works even for non-gradient vector fields. In particular, when $\Phi\equiv0$, there holds
    \[\begin{aligned} dG(0)[\xi]&= \int \nabla (\log \rho_* + V+W*\rho_*) \cdot \xi \, \rho_*(x) dx \\
    &=\langle \nabla (\log \rho_* + V+W*\rho_*) ,\xi \rangle_{L^2_{\mu_*}} \end{aligned},\]
    for $\mu_*=\rho_* dx$ with positive density $\rho_* \in W^{1,p}$ and vector field $\xi \in W^{2,p}$. This also shows the formal Wasserstein gradient at $\rho_*dx$ is $\nabla (\log \rho_* + V+W*\rho_*)\in T_{\rho_*dx}\mathcal{P}_2^{ac}$ and \eqref{MV} is the gradient flow of $\mathcal{F}$. 
\end{remark}

 \begin{corollary} For $p>n$ there is $\sigma_0>0$ such that the gradient map $\mathcal{M}: B_{\sigma_0}(0) \subset \mathring{W}^{2,p}\to \mathring{L}^p$ is well-defined. More precisely for $\Phi \in B_{\sigma_0}(0)$, 
 \[\mathcal{M}(\Phi)= \mathcal{M}^{\widetilde{\cU}} (\Phi)+\mathcal{M}^{\widetilde{\cV}} (\Phi)+\mathcal{M}^{\widetilde{\cW }} (\Phi),\] 
\begin{equation} \label{eq-mathcalMs}\begin{aligned} \mathcal{M}^{\widetilde{\cU}} (\Phi)  &= P_{\mu_*} ( \rho_*^{-1} \Div (\rho_* [(I+\nabla \Phi )^{-1}]) ), \\  
\mathcal{M}^{\widetilde{\cV}} (\Phi) &=  P_{\mu_*}( \nabla V(x+\Phi(x)) ) , \\
\mathcal{M}^{\widetilde{\cW }} (\Phi)&= P_{\mu_*} \int \nabla W(x+\Phi(x)-y-\Phi(y))\, d\rho_*(y).  \end{aligned}\end{equation} 
 	
 	\begin{proof} The expression for the gradient is immediate from Lemma \ref{lemma-1stvariation}. It remains to check each term is in $\mathring{L}^p$. In view of \eqref{eq-bddproj}, it suffices to check the function before projection is in $L^p$.  First, $(I+\nabla \Phi)^{-1} \in W^{1,p}$ due to positive definiteness of $I+\nabla \Phi$ and the fact that $W^{1,p}$ is a Banach algebra for $p>n$. This shows $\mathcal{M}^{\widetilde{\cU}} (\Phi)\in \mathring{L}^p$. $\mathcal{M}^{\widetilde{\cV}} (\Phi)\in \mathring{L}^p$ is straightforward because $\nabla V(x+\Phi(x)) \in W^{2,p}$. Finally, \eqref{eq-smallness} implies {$d(x+\Phi(x), y+\Phi(y))\ge \frac12 d(x,y)$}, and therefore 
    \[|\int \nabla W(x+\Phi(x)-y-\Phi(y))d\rho_*(y)| \le C\Vert \rho_*\Vert_{L^\infty} \int_{[-\frac12,\frac12)^n }{|z|^{-n+1}}dz.\]This shows $\int \nabla W(x+\Phi(x)-y-\Phi(y))d\rho_*(y) \in L^\infty$.
 		
 	\end{proof}

 \end{corollary}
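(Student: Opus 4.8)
\emph{Strategy.} The plan is to read off a formula for $\mathcal{M}(\Phi)$ directly from the first–variation identities of Lemma~\ref{lemma-1stvariation}, identify $\mathcal{M}(\Phi)$ as the Helmholtz projection $P_{\mu_0}$ of an explicit vector field, and then verify that each of the three pieces lands in $\mathring{L}^p$. The only genuine work is an $L^p$ (in fact mostly $L^\infty$) bound on the vector fields appearing \emph{before} the projection.

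\emph{Identifying the gradient.} First fix $\sigma_0$ to be the constant of Lemma~\ref{lemma-1stvariation}, shrunk if necessary so that the smallness condition \eqref{eq-smallness} also holds whenever $\|\Phi\|_{\mathring{W}^{2,p}}\le\sigma_0$. For such $\Phi$ set
\[
X_\Phi(x) := \rho_0(x)^{-1}\,\Div\!\bigl(\rho_0\,[(I+\nabla\Phi)^{-1}]\bigr)(x)\;+\;\nabla V(x+\Phi(x))\;+\;\int_{\Tn}\nabla W(x+\Phi(x)-y-\Phi(y))\,d\rho_0(y).
\]
Rewriting each of the three integrals in \eqref{eq-1stvariation} by factoring out $\rho_0$, Lemma~\ref{lemma-1stvariation} gives $dG(\Phi)[\xi]=\langle X_\Phi,\xi\rangle_{L^2_{\mu_0}}$ for every $\xi\in\mathring{W}^{2,p}$, once we know $X_\Phi\in L^p$ so that this pairing is finite against $\xi$. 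Since $\xi$ is a $\mu_0$–gradient vector field and, by Lemma~\ref{lemma-Helmholtz}, $X_\Phi-P_{\mu_0}X_\Phi$ is $\mu_0$–divergence free, an integration by parts yields $\langle X_\Phi-P_{\mu_0}X_\Phi,\xi\rangle_{L^2_{\mu_0}}=0$; hence $dG(\Phi)[\xi]=\langle P_{\mu_0}X_\Phi,\xi\rangle_{\mathcal{H}_{\mu_0}}$ for all such $\xi$, and by the uniqueness in Definition~\ref{definition-gradientmap} we conclude $\mathcal{M}(\Phi)=P_{\mu_0}X_\Phi$, which is exactly the decomposition $\mathcal{M}=\mathcal{M}^{\widetilde{\cU}}+\mathcal{M}^{\widetilde{\cV}}+\mathcal{M}^{\widetilde{\cW}}$ recorded in \eqref{eq-mathcalMs}.

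\emph{Integrability of the three pieces.} It then remains to show $\mathcal{M}(\Phi)\in\mathring{L}^p$, and by the boundedness \eqref{eq-bddproj} of $P_{\mu_0}$ it suffices to prove $X_\Phi\in L^p$ (equivalently $L^p_{\mu_0}$, using $0<C^{-1}\le\rho_0\le C$). For the internal–energy term: since $p>n$, $W^{1,p}(\Tn)$ is a Banach algebra, and $I+\nabla\Phi$ has $W^{1,p}$ entries and is pointwise positive definite by \eqref{eq-smallness}, so $(I+\nabla\Phi)^{-1}\in W^{1,p}$; multiplying by $\rho_0\in W^{1,p}$, taking the divergence, and dividing by $\rho_0$ (bounded below) keeps us in $L^p$. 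For the confinement term: $\nabla V$ is bounded on $\Tn$, so $\nabla V(x+\Phi(x))\in L^\infty\subset L^p$ (one can also check $\nabla V(\,\cdot+\Phi(\cdot)\,)\in W^{2,p}$ via the chain rule since $W^{2,p}\hookrightarrow C^1$). For the interaction term: \eqref{eq-smallness} gives the bi–Lipschitz bound $d(x+\Phi(x),y+\Phi(y))\ge\tfrac12\,d(x,y)$, so assumption \textbf{(\ref{assump_A3})} with $k=1$ gives $|\nabla W(x+\Phi(x)-y-\Phi(y))|\le C\,d(x,y)^{1-n}$, which is integrable in $y$ against $\rho_0\,dy$ because $n-1<n$; hence $\int_{\Tn}\nabla W(x+\Phi(x)-y-\Phi(y))\,d\rho_0(y)\in L^\infty\subset L^p$. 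Thus $X_\Phi\in L^p$ and $\mathcal{M}(\Phi)\in\mathring{L}^p$.

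\emph{Main obstacle.} The delicate point is the interaction term, where the Coulomb–type singularity of $\nabla W$ must be controlled uniformly in $\Phi$ near $0$: the key is that the smallness of $\nabla\Phi$ keeps the perturbed difference map $x,y\mapsto (x+\Phi(x))-(y+\Phi(y))$ bi–Lipschitz with a fixed constant, so the local integrability of $|z|^{1-n}$ on $\Tn$ survives the transport. The internal–energy term is a routine consequence of the Banach–algebra property of $W^{1,p}$ for $p>n$, and the confinement term is immediate.
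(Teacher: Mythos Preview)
Your proof is correct and follows essentially the same approach as the paper: both identify $\mathcal{M}(\Phi)$ as $P_{\mu_0}$ applied to the explicit vector field read off from Lemma~\ref{lemma-1stvariation}, and both verify $L^p$-membership of the three pieces using the Banach-algebra property of $W^{1,p}$ for the internal-energy term, smoothness of $V$ for the confinement term, and the bi-Lipschitz bound from \eqref{eq-smallness} together with \textbf{(\ref{assump_A3})} for the interaction term. You are simply more explicit than the paper about why the projection $P_{\mu_0}$ appears (via the orthogonality of the $\mu_0$-divergence-free remainder to gradient fields), which the paper compresses into ``immediate from Lemma~\ref{lemma-1stvariation}.''
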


\begin{proof}[Proof of Theorem \ref{thm-analyticity2}] It suffices to prove that if $\Vert \Phi\Vert_{\mathring{W}^{2,p}} \le \sigma_0$, then there exists $C=C(n,p,\rho_*,C_V,C_W)<\infty$ (recall $C_V$ and $C_W$ are from \textbf{(\ref{assump_A1})} and \textbf{(\ref{assump_A3})}) such that for each $\Vert \xi_i\Vert _{\mathring{W}^{2,p}}\le1$ and $k\in \mathbb{N}_0$,
\[ \Vert d^k \mathcal{M}(\Phi) [\xi_1,\xi_2,\ldots,\xi_k] \Vert _{\mathring{L}^p} \le C^{k+1} k! .\] We show this property for each of three gradient maps.

\medskip

\noindent \textbf{(i) Analyticity of $\mathcal{M}^{\widetilde{\cV}}$}.

We compute the directional derivative of $\cM^{\tcV}$ at $\Phi$ in the direction of $\xi$. In view of the dominated convergence theorem, 
\begin{equation}\label{eq-dMV}
d \cM^{\tcV}(\Phi)[\xi] = \frac{d}{d\ep}\Big|_{\ep=0} \cM^{\tcV}(\Phi + \ep \xi)
=P_{\mu_*} \na^2 V(id + \Phi)\xi.
\end{equation}
Using \eqref{eq-bddproj} and \textbf{(\ref{assump_A1})}, it follows that 
\begin{equation*}
\Vert d \cM^{\tcV}(\Phi)[\xi] \Vert_{\mathring{L}^p}
\leq \Vert \na^2 V(id + \Phi)\xi  \Vert_{\mathring{L}^p}   \leq C_V^3 2!  \Vert \xi \Vert_{\mathring{L}^p}
\leq  C_V^3 2!.
\end{equation*}
Similarly, we inductively obtain
\begin{align}
&\sup_{\Vert \xi_i \Vert_{\mathring{W}^{2,p}}=1  }\Vert d^k \cM^{\tcV}(\Phi)[\xi_1, ..., \xi_k] \Vert_{\mathring{L}^p} 
\nn \\
&
=  \Big\Vert \frac{\p^k}{\p s_k ... \p s_1}\Big|_{s_1=...=s_k=0} \cM^{\tcV}(\Phi + s_1 \xi_1 + ... + s_k \xi_k) \Big\Vert_{\mathring{L}^p}
\nn \\
&
=\Vert P_{\mu_*} \na^{k+1} V(id + \Phi)\xi_1 ... \xi_k \Vert_{\mathring{L}^p} \leq {C_1^k}C_V^{k+2} (k+1)!.
\nn
\end{align}
Here, $C_1=C_1(n,p,\rho_*)<\infty $ and $\nabla^{k+1}V(id + \Phi)\xi_1 ... \xi_k$ represents
\[x\mapsto d ^k\nabla V(x+\Phi(x)) [\xi_1(x),\ldots, \xi_k(x)]\in \mathbb{R}^n.\] To have the estimate above, we used $W^{1,p}$ is a Banach algebra and thus $\Vert  \Pi_{i=1}^k |\xi_i| \Vert_{L^p} \le\Vert  \Pi_{i=1}^k |\xi_i| \Vert_{W^{1,p} } \le   C_2^{k-1}  \Pi_{i=1}^k \Vert \xi_i \Vert_{W^{1,p}} $. Since $C_1^k C_V^{k+2}(k+1)! \le C^{k+1} k! $ for some $C=C(n,p,\rho_*,C_V)<\infty$, this proves the desired result.

\medskip

\noindent \textbf{(ii) Analyticity of $\mathcal{M}^{\widetilde{\cU}}$}.

 We show that $\cM^{\tcU}$ is analytic on the open ball $B_{\si_0}(0):=\{ \Phi \in \mathring{W}^{2,p} : \Vert \Phi \Vert_{\mathring{W}^{2,p}} < \si_0 \}$, where $\si_0$ will be specified later.

For two square matrices $A,B$, recall that if $A$ is invertible, then $\frac{d}{d\ep}\big|_{\ep=0}(A+\ep B)^{-1}=-A^{-1} B A^{-1}$.With Einstein summation convention, 
\begin{equation} \label{eq-dMU}
d \cM^{\tcU}(\Phi)[ \xi] = 
P_{\mu_*} \Big[  
\rho_*^{-1} \p_j \Big(  
\big(-
(I+\na \Phi)^{-1} \na \xi (I+\na \Phi)^{-1} 
\big)_{ij} \rho_* 
\Big)  
\Big],
\end{equation}
equivalently, by setting $A_{ij}:=[(I+\na \Phi)^{-1}]_{ij}$ and $B_{ij}:=\p_j \xi_i$,
\begin{equation}\label{M_first_derivative}
d \cM^{\tcU}(\Phi)[ \xi] =- 
P_{\mu_*} \Big[  
\rho_*^{-1} \p_j \Big(  
(A_{ik} B_{kp} A_{pj}) \rho_* 
\Big)
\Big].
\end{equation}
By inductively applying similar argument, $\mathcal{M}^{\tcU}$ is differentiable at all orders and 
 \[ d^k \mathcal{M}^{\tcU}(\Phi)[\xi_1,\ldots,\xi_k]=  \sum_{\sigma \in S_k} (-1)^k P_{\mu_*} [\rho_*^{-1} \mathrm{div}(\rho_*   (\Pi _{i=1}^k A B^{\sigma_i })A)],\]
where $B^k_{ij} := \p_{j} (\xi_k)_i $ and  $(\Pi _{i=1}^k A B^{\sigma_i })A= AB^{\sigma_1}AB^{\sigma_2}\cdots AB^{\sigma_k}A$. Since $\rho_*$ is positive function belongs to $W^{1,p}$ and $|S_k|=k!$, for the analyticity, it suffices to check 
\[\Vert ((\Pi _{i=1}^k A B^{\sigma_i })A)_{ij} \Vert _{{W}^{1,p} } \le C^{k+1}, \] for all $1\le i,j\le k$, for some $C<\infty$. In order to see this clearly, let us define 
\[\Vert A\Vert_{W^{1,p},\infty }:= \max_{i,j} \Vert A_{ij}\Vert _{W^{1,p}}.\]
For two matrix valued functions $A,B\in (W^{1,p})^{n^2}$, by considering the matrix multiplication and the fact that $W^{1,p}$ is a Banach algebra, 
\[ \Vert A B \Vert _{W^{1,p},\infty } \le C_{n,p} \Vert A\Vert_{W^{1,p},\infty } \Vert B\Vert_{W^{1,p},\infty }. \] Recall $\Vert B^i \Vert_{W^{1,p},\infty} \le C'  $ as $\Vert \xi \Vert _{\mathring{W}^{2,p}} \le 1$.
Now the result follows since by choosing $\sigma_0$ sufficiently small, we may assume $A=(I+\nabla \Phi)^{-1} $ satisfies $\Vert A \Vert_{W^{1,p},\infty} \le 2$. i.e., 
\[ \Vert AB^{\sigma_1}\cdots AB^{\sigma_k}A\Vert_{W^{1,p},\infty } \le  2^{k+1}C_{n,p}^{2k}C'^k\le (2C_{n,p}^2 C' )^{k+1} . \]

\medskip

\noindent \textbf{(iii) Analyticity of $\mathcal{M}^{\widetilde{\cW}}$}.

As before, let $\si_0 >0$ be small enough so that 
\begin{equation}\label{distance_lower_bound}
d( x+ \Phi (x) , y+ \Phi(y) )
\geq       \frac{1}{2}d(x,y).    
\end{equation}
By taking (Gateaux) derivative of $\mathcal{M}^{\tcW}:\mathring{W}^{2,p}\to \mathring{L}^p$ in \eqref{eq-mathcalMs}, 
\begin{align} \label{eq-dMW}d\cM^{\tcW} (\Phi)[\xi] 
=P_{\mu_*} \int \na^2 W(\cdot + \Phi(\cdot) - y -\Phi(y)) (\xi(\cdot) - \xi(y)) d\rho_*(y).
\end{align}	
The directional derivative above is justified by the same argument as in the proof of Lemma \ref{lemma-1stvariation}:
Note that $|\nabla \xi |\le \sigma $ for some $\tilde \sigma =\tilde \sigma (n,p,\rho_*)$ if $\Vert \xi \Vert_{\mathring{W}^{2,p}}\le 1$. Thus,
$|\nabla W(x+\Phi(x)+t\xi(x)-y-\Phi(y)-t\xi(y))-\nabla W(x+\Phi(x)-y-\Phi(y)) | \le C d(x,y)^{1-n}$, for some $C=C(n,p,\rho_*)<\infty$ for sufficiently small $|t|$. Since $ \int_{\Tn}d(\cdot ,y)^{1-n} d\rho_*(y)<\infty $ is a constant (in particular, bounded), the dominated convergence theorem applies.  Moreover, we estimate the limit using \textbf{(\ref{assump_A3})} and above constants as 
\begin{align}
&  \sup_{\Vert \xi \Vert_{\mathring{W}^{2,p}} =1} 
\Vert  d\cM^{\tcW} (\Phi)[\xi]  \Vert_{\mathring{L}^p}            
\nn  \\
&\leq  C_P
\Big(\int \Big| \int  \frac{ C_W^3 2!}{(2^{-1}d(x,y))^n}\tsi d(x,y)  d\rho_*(y) 
\Big|^p d\rho_*(x)
\Big)^{1/p}
\nn \\
&
=C_{P} C_W^3 2^n \tsi I_{n-1} 2! 
\nn 
\end{align}	
where $C_{P}= \Vert P_{\mu_*}\Vert_{L^p_{\mu_*} \to L^p_{\mu_*} }$ is the operator norm of $P_{\mu_*}$ from \eqref{eq-bddproj} and 
$$
I_\al:=\bigg(  \int \Big( \int \frac{1}{d^{\al}(x,y) } d\rho_*(y) \Big)^p d\rho_*(x)     \bigg)^{1/p} <\infty 
\quad 0<\al<n.
$$

Following a similar argument, higher order Gateaux derivatives are  

\begin{equation} \label{eq-GateauxW}
    \begin{aligned}
        &  d^k\cM^{\tcW} (\Phi)[\xi_1, ..., \xi_k]         = \frac{\p^k}{\p s_k\cdots  \p s_1}\bigg|_{s_i=0} \cM^{\tcW}(\Phi + s_1 \xi_1+\cdots s_k \xi_k  )\\
         &= P_{\mu_*} \int  \na^{k+1}  W ( \cdot + \Phi(\cdot)-y -\Phi(y) )  \prod_{i=1}^k  (\xi_i(\cdot)-\xi_i(y))d\rho_* (y),
    \end{aligned}
\end{equation}
and 
\begin{align}
    &\sup_{\Vert \xi_i \Vert_{\mathring{W}^{2,p}} =1}
\Vert  d^k \cM^{\tcW} (\Phi)[\xi_1, ..., \xi_k]  \Vert_{\mathring{L}^p} 
\nn \\
&\leq 
C_P\Big(\int \Big| \int  \frac{ C_W^{k+2} (k+1)!}{(2^{-1}d(x,y))^{n-1+k}}\tsi^k d^k(x,y)  d\rho_*(y) 
\Big|^p d\rho_*(x)
\Big)^{1/p}
\nn \\  
&
=2^{n-1+k}C_PI_{n-1} C_W^{k+2} \tsi^k  \, (k+1)! \le C^{k+1}k! , 
\nn 
\end{align}
for some $C=C(n,p,\rho_*,C_W)<\infty$.

\medskip

Combining three analyticity estimates, we obtain desired result $$\Vert d^k \mathcal{M}(\Phi) \Vert \le C^{k+1}k! ,$$ for some $C=C(n,p,\rho_*,C_V,C_W)$ if $\Vert \Phi \Vert_{\mathring{W}^{2,p}}< \sigma_0 $. 
\end{proof}

\section{Fredholmness of second variation} \label{sec-fredholm}

The goal of this section is to show that the second variation operator at the origin of $G$ is Fredholm as a map between suitable Banach spaces. More precisely, for positive $\rho_*\in W^{1,p}$ with some $p>n$, this linear map $L:\mathring{W}^{2,p}\to \mathring{L}^p$ is given by 
\begin{equation}\label{characterize_L}
L\xi :=\frac{d}{ds}\Big|_{s=0} \cM (s\xi)=d\mathcal{M}(0)[\xi],
\end{equation}	 and by \eqref{eq-dMV},\eqref{eq-dMU},\eqref{eq-dMW}, it has the following explicit form
\begin{align} \label{eq-L}
&L\xi:= P_{\mu_*} ( -\rho_*^{-1} \Div (\rho_* \na \xi)+ \nabla^2 V\cdot \xi + \mathcal{K}[\xi] ),
\end{align}	where
\begin{equation}\begin{aligned}\label{eq-defK}
 \cK[\xi] &=  \int \na^2 W(\cdot -y)\cdot  (\xi (\cdot)-\xi(y))d\rho_*(y).
\end{aligned}\end{equation}
This form essentially corresponds to the formula formally studied in \cite[Sec. 3.1]{MR2053570}. In particular, the map $L$ is associated with the Hessian of $G$ by 
\[d^2 G(0)[\xi_1,\xi_2] = \langle L\xi_1,\xi_2 \rangle_{\mathcal{H}_{\mu_*}}= \langle \xi_1, L\xi_2 \rangle_{\mathcal{H}_{\mu_*}}.\]

The Fredholmness of $L$ will be crucial in the derivation of \L ojasiewicz-Simon inequality. Regardless of the derivation of $L$, we may view $L$ \eqref{eq-L} as a bounded linear map between  $\mathring{W}^{2,\gamma}$ and $\mathring{L}^\gamma$ for $\gamma\in(1, p]$, (see Lemma \ref{Hessian}). Moreover, we prove the following:

\begin{theorem}[Fredholmness] \label{thm-fredholm} Let $\rho_* $ be a positive probability density lying in $W^{1,p}$ some $p>n$. For $\gamma \in [\frac{p}{p-1},p]$, $L:\mathring{W}^{2,\gamma }\to \mathring{L}^\gamma $ in \eqref{eq-L} is a Fredholm  operator. i.e., $L$ is bounded linear operator which has finite dimensional kernel, closed image and finite dimensional cokernel.
Moreover, 
\begin{enumerate}[label=(\arabic*),leftmargin=2em]
    \item $\ker L \subset \mathring{W}^{2,q}$ for all $q \in[ \frac{p}{p-1},p]$ and thus the kernel does not depend on $\gamma$.
    \item $\mathrm{index\,}L=0$. More precisely, $$\mathrm{Im\,}L= \{f \in \mathring{L}^\gamma : \int f g\, d\rho_*=0 \text{ for all }g\in \ker L\}.$$
\end{enumerate}
	
\end{theorem}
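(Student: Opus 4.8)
\emph{Strategy.} I would realize $L$ as a compact perturbation of the negative Laplacian on $\Tn$ and then apply the Riesz--Schauder theory together with $L^\gamma$ elliptic regularity. As a preliminary, note that $L\colon\mathring{W}^{2,\gamma}\to\mathring{L}^\gamma$ is bounded: the two local terms in \eqref{eq-L} are routine (using $V\in C^\infty$ and $\rho_0\in W^{3,\infty}$), so the only issue is the nonlocal operator $\mathcal{K}$ in \eqref{eq-defK}. Writing
\[
\mathcal{K}[\xi]=\xi\cdot\mathrm{p.v.}\bigl(\nabla^2 W\ast\rho_0\bigr)-\mathrm{p.v.}\bigl(\nabla^2 W\ast(\rho_0\xi)\bigr),
\]
Assumption \textbf{(\ref{assump_A3})} gives $|\nabla^2 W(z)|\lesssim|z|^{-n}$ and $|\nabla^3 W(z)|\lesssim|z|^{-n-1}$, so that $\nabla^2 W$ enjoys the size and smoothness bounds of a Calderón--Zygmund kernel, while the central symmetry \textbf{(\ref{assump_A2})} furnishes the cancellation that makes the principal value a bounded operator on $L^\gamma$ and on $W^{1,\gamma}$ for every $\gamma\in(1,\infty)$. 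Since $\rho_0\in W^{3,\infty}$, the multiplier $\mathrm{p.v.}(\nabla^2 W\ast\rho_0)$ lies in $C^{1,\alpha}$, whence $\mathcal{K}\colon W^{1,\gamma}\to W^{1,\gamma}$ is bounded; combined with the boundedness of $P_{\mu_0}$ from Lemma \ref{lemma-Helmholtz}, this gives boundedness of $L$.

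\emph{Splitting off $-\Delta$.} The key point is that for a gradient field $\xi=\nabla u$ one has $-\Delta\xi=-\nabla(\Delta u)$, which is again a gradient field, so $P_{\mu_0}(-\Delta\xi)=-\Delta\xi$. Inserting $-\rho_0^{-1}\Div(\rho_0\nabla\xi)=-\Delta\xi-\rho_0^{-1}\nabla\rho_0\cdot\nabla\xi$ into \eqref{eq-L} then gives
\[
L=-\Delta+K,\qquad K\xi:=P_{\mu_0}\Bigl(-\rho_0^{-1}\nabla\rho_0\cdot\nabla\xi+\nabla^2 V\cdot\xi+\mathcal{K}[\xi]\Bigr).
\]
Here $-\Delta\colon\mathring{W}^{2,\gamma}\to\mathring{L}^\gamma$ is an isomorphism: passing to the zero-mean scalar potential $u$ of $\xi=\nabla u$, it is conjugate to the standard periodic isomorphism $-\Delta$ between the zero-mean subspaces of $W^{3,\gamma}$ and $W^{1,\gamma}$ on $\Tn$. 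And $K$ is compact: the first two summands map $\mathring{W}^{2,\gamma}$ boundedly into $\mathring{W}^{1,\gamma}$ (using $\rho_0^{-1}\nabla\rho_0\in W^{2,\infty}$ and $V\in C^\infty$), the third does so by the preceding paragraph, the embedding $\mathring{W}^{1,\gamma}\hookrightarrow\mathring{L}^\gamma$ is compact by Rellich--Kondrachov, and $P_{\mu_0}$ is bounded. Thus $L=-\Delta\bigl(I+(-\Delta)^{-1}K\bigr)$ with $(-\Delta)^{-1}K$ compact on $\mathring{W}^{2,\gamma}$, so Riesz--Schauder shows $I+(-\Delta)^{-1}K$ is Fredholm of index $0$, and composing with the isomorphism $-\Delta$ proves $L$ is Fredholm of index $0$.

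\emph{The two refinements.} For (1): if $\xi\in\ker L$ then $-\Delta\xi=-K\xi$, and since $K$ loses at most one derivative (bounded $\mathring{W}^{k,q}\to\mathring{W}^{k-1,q}$ for small $k$ and every $q$, up to the $W^{3,\infty}$ regularity of $\rho_0$ and using the $W^{1,q}$-boundedness of $\mathcal{K}$) while $(-\Delta)^{-1}$ gains two derivatives and Sobolev embedding raises the integrability exponent, a finite bootstrap yields $\xi\in\mathring{W}^{2,q}$ for all $q\in(1,\infty)$; hence $\ker L$ is a fixed finite-dimensional subspace, independent of $\gamma$. For (2): $L$ is symmetric for $\langle\cdot,\cdot\rangle_{L^2_{\mu_0}}$ — this is the identity $\langle L\xi_1,\xi_2\rangle_{\mathcal{H}_{\mu_0}}=\langle\xi_1,L\xi_2\rangle_{\mathcal{H}_{\mu_0}}$ recorded after \eqref{eq-defK}, obtained by symmetrizing the $\mathcal{K}$-term in $x\leftrightarrow y$ via \textbf{(\ref{assump_A2})} — so every $g\in\ker L$ satisfies $\langle Lf,g\rangle_{L^2_{\mu_0}}=\langle f,Lg\rangle_{L^2_{\mu_0}}=0$ for all $f\in\mathring{W}^{2,\gamma}$, i.e.\ $\ker L\subseteq(\mathrm{Im}\,L)^{\perp}$, the $L^2_{\mu_0}$-annihilator of $\mathrm{Im}\,L$ inside $\mathring{L}^{\gamma'}$ (using $(\mathring{L}^\gamma)^{*}\cong\mathring{L}^{\gamma'}$ through that pairing, a consequence of Lemma \ref{lemma-Helmholtz}). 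Since $\mathrm{index}\,L=0$ forces $\dim(\mathrm{Im}\,L)^{\perp}=\dim\mathrm{coker}\,L=\dim\ker L$, the inclusion is an equality, and as $\mathrm{Im}\,L$ is closed this gives $\mathrm{Im}\,L=\{f\in\mathring{L}^{\gamma}:\int fg\,d\rho_0=0\ \text{for all }g\in\ker L\}$.

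\emph{Main obstacle.} I expect the real work to lie in the $L^\gamma$-boundedness of $\mathcal{K}$ for all $\gamma\in(1,\infty)$, particularly for $\gamma\le n$ where $\mathcal{K}[\xi]$ must be interpreted as a principal value: this is exactly where \textbf{(\ref{assump_A2})} and \textbf{(\ref{assump_A3})} are used essentially, via the Calderón--Zygmund analysis of $\nabla^2 W$ (size and smoothness from \textbf{(\ref{assump_A3})}, cancellation from central symmetry \textbf{(\ref{assump_A2})}). A secondary but more mechanical point is the bookkeeping in the bootstrap for (1), balancing the $W^{3,\infty}$ regularity of $\rho_0$ against the iteration in the Sobolev exponent.
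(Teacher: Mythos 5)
Your overall architecture is sound and genuinely different from the paper's. You realize $L$ as $-\Delta + K$ with $-\Delta$ an isomorphism $\mathring{W}^{2,\gamma}\to\mathring{L}^\gamma$ (using that $P_{\mu_0}$ fixes gradient fields) and $K$ compact, then invoke Riesz--Schauder; the paper instead proves an a priori Calder\'on--Zygmund estimate $\Vert\Phi\Vert_{W^{2,\gamma}}\le C(\Vert L\Phi\Vert_{L^\gamma}+\Vert\Phi\Vert_{L^\gamma})$ for distributional solutions (its Lemma \ref{lemma-CZ}) and deduces finite kernel, closed range and the cokernel characterization by Riesz's lemma, Rellich compactness, and a Hahn--Banach duality argument. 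Your route buys index zero for free from Riesz--Schauder, whereas the paper gets it from the symmetry \eqref{eq-657} plus Hahn--Banach; your bootstrap for (1) and your dimension count for (2) are both fine. The CZ-estimate route has the advantage of being reused verbatim elsewhere in the paper (closed range, kernel regularity, and the dual regularity needed for the cokernel all come from the same lemma).

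There is, however, one genuine error: your treatment of $\mathcal{K}$. You split $\mathcal{K}[\xi]=\xi\cdot\mathrm{p.v.}(\nabla^2W*\rho_0)-\mathrm{p.v.}(\nabla^2W*(\rho_0\xi))$ and claim that central symmetry \textbf{(\ref{assump_A2})} supplies the Calder\'on--Zygmund cancellation. It does not: $W(z)=W(-z)$ makes $\nabla^2W$ \emph{even}, which is the wrong parity — odd kernels cancel automatically, even kernels need the mean-zero condition on spheres, and a generic $W$ satisfying \textbf{(\ref{assump_A3})} fails it. For a radial $W=\omega(|z|^2)$ one has $\nabla^2W=4\omega''(|z|^2)\,z\otimes z+2\omega'(|z|^2)I_n$, and the diagonal part $2\omega'(|z|^2)\delta_{ij}\sim|z|^{-n}$ has nonvanishing spherical average, so the two principal values in your splitting need not exist separately. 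The correct (and simpler) argument is the paper's Lemma \ref{HLS}: keep the difference $\xi(x)-\xi(y)$ inside the integral, use $|\xi(x)-\xi(y)|\le d(x,y)\int_0^1|\nabla\xi(\xi_t)|\,dt$ to trade the kernel $|z|^{-n}$ for the integrable $|z|^{1-n}$, and conclude $\mathcal{K}:\mathring{W}^{1,\gamma}\to L^\gamma$ is bounded by Minkowski's integral inequality — no cancellation needed. Your further claim that $\mathcal{K}:W^{1,\gamma}\to W^{1,\gamma}$ is bounded is also unsubstantiated (differentiating produces $\nabla^3W\sim|z|^{-n-1}$ against a gain of only one power of $d(x,y)$, which is again borderline), and it is not needed: for compactness of the $\mathcal{K}$-contribution to $K$, simply precompose the bounded map $\mathcal{K}:\mathring{W}^{1,\gamma}\to L^\gamma$ with the compact embedding $\mathring{W}^{2,\gamma}\hookrightarrow\mathring{W}^{1,\gamma}$. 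With that repair your proof goes through.
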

In the remaining of this section, we assume $\rho_* \in W^{1,p}$ for $p>n$. This weak assumption leads some technical difficulties, which one may skip in the first reading; at later Section \ref{sec-proofmainthm}, the result here will be applied for stationary solution $\rho_*$, which is smooth and positive due to elliptic regularity. The proof of Theorem \ref{thm-fredholm} will be given at the end of this section.

\begin{remark} \label{remark-67} For $\Phi$ and $\Psi\in \mathring{W}^{2,\gamma }$ for $\gamma\in (1,\infty)$, we have
    \begin{enumerate}[label=(\arabic*),leftmargin=2em]
 \item      $\int \mathcal{K}[\Phi]\Psi  d\rho_*=\int \mathcal{K}[\Psi ]\Phi  d\rho_* $ and they are equal to
 \[ \frac12 \iint \la \Psi(x)-\Psi(y),  \nabla^2 W(x-y) ( \Phi(x)-\Phi(y)) \ra  \, d\rho_*(x)d\rho_*(y)    .\]
 See Lemma \ref{HLS}.
    \item  If we further assume $\gamma >n$, then
   $$d^2 G(0)[\Phi,\Psi]= \langle L\Phi,\Psi\rangle _{\mathcal{H}_{\mu_*}}=\langle \Phi,L\Psi\rangle _{\mathcal{H}_{\mu_*}}=B[\Phi,\Psi].$$
$B[\Phi,\Psi]$ is a bilinear form associated with $L$ in $\mathcal{H}_{\mu_*}$, which is defined by $
    B [\Phi, \Psi] := \int [ \langle \nabla\Phi,\nabla\Psi \rangle  +  \nabla^2 V[\Phi,\Psi]  + \mathcal{K}[\Phi]\Psi  ] \,d\rho_*$. 
   Here we used $\langle P_{\mu_*} \Phi , \Psi \rangle_{\mathcal{H}_{\mu_*}} = \langle \Phi,\Psi\rangle _{\mathcal{H}_{\mu_*}}$.

    \end{enumerate}
\end{remark}

\begin{lemma} \label{HLS} For $\gamma \in[1,\infty)$, $\mathcal{K}$ in \eqref{eq-defK} is a bounded linear map from $\mathring{W}^{1,\gamma }(\Tn)$ to $L^\gamma(\Tn)$. 

\begin{proof} 
	
Using Assumption~\textbf{(\ref{assump_A2})} and \eqref{eq-defK}, we write
\begin{equation*}\begin{aligned} 
\cK[\Phi](x_1) = & \int \nabla^2 W(x_1 -x_2) (\Phi(x_1)-\Phi(x_2))d \rho_* (x_2).\end{aligned}\end{equation*}
Let $\xi_t$, for $0\leq t \leq 1$, denote the geodesic between $x_1$ and $x_2$ in $\Tn$. In terms of $d(x_1,x_2)$, the distance in $\Tn$, we have 
\begin{equation*}
|\Phi(x_1)-\Phi(x_2)| \leq \int_0^1 |\nabla \Phi(\xi_t)| d(x_1,x_2) dt,    
\end{equation*}
which gives
\begin{align}
&|\cK[\Phi]|
 \leq  \int_0^1 \int  
   \frac{|\na \Phi (\xi_t))|}{d(x_1,x_2)^{(n-1)/\gamma}}  \frac{1}{d(x_1,x_2)^{(n-1)/\gamma'}}
 d\rho_*(x_2) dt  
 \nn \\
& \leq
\int_0^1   \Big
(\int \frac{|\na \Phi(\xi_t)|^\gamma}{d(x_1,x_2)^{n-1}}d\rho_*(x_2)
\Big)^{1/\gamma}
\Big( \int \frac{d\rho_*(x_2)}{d(x_1,x_2)^{n-1}} \Big)^{1/\gamma'}   dt
\end{align} 
By integrating with $\gamma$-th power of both sides and using Minkowski inequality (integral version), we see that
\begin{align}
    &\Big( \int |\cK[\Phi]|^\gamma d\rho_*(x_1) \Big)^{1/\gamma}
    \nn \\
&
\leq
\Big( \int \frac{d\rho_*(x_2)}{d(x_1,x_2)^{n-1}} \Big)^{1/\gamma'}
\int_0^1    \Big( \int \int \frac{|\na \Phi(\xi_t)|^\gamma}{d(x_1,x_2)^{n-1}}d\rho_*(x_2)   d\rho_*(x_1) \Big)^{1/\gamma}
   dt
\end{align} 
Observe that for $t\in[0,1]$, \begin{equation*}\begin{aligned}
 &\int_{\Tn} \int_{\Tn} \frac{|\na \Phi(\xi_t)|^\gamma}{d(x_1,x_2)^{n-1}}d\rho_*(x_2)   d\rho_*(x_1)	\\ &\le \int_{\Tn} \int_{[-\frac12,\frac12)^n} \frac{|\na \Phi(x_1+tz)|^\gamma}{|z|^{n-1}}\rho_*(x_1+tz) dz   d\rho_*(x_1)	\\
 & \le\Vert \nabla \Phi \Vert _{L^\gamma(d\rho_*)}^\gamma \int_{[-\frac12,\frac12)^n} \frac{\Vert \rho_* \Vert_{L^\infty}}{|z|^{n-1}} dz, 
 \end{aligned} 
\end{equation*}
where we used the uniform bound of $\rho_*$ and Fubini's theorem in the last equality. Combining above results, we obtain
\begin{equation*}
\begin{aligned}
&\Big( \int |\cK[\Phi]|^\gamma d\rho_*(x_1) \Big)^{1/\gamma}\le \Vert \rho_*\Vert_{L^\infty}
\Big( \int_{[-\frac12,\frac12)^n} \frac{dz}{|z|^{n-1}} \Big)
\Vert \na  \Phi \Vert_{L^\gamma(d \rho_*)},	
\end{aligned}
\end{equation*}
which proves the desired estimate.
\end{proof}

\end{lemma}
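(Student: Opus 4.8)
The plan is to bound $\mathcal{K}[\Phi]$ pointwise by a fractional integral whose kernel is just barely integrable, and then close the estimate by a routine H\"older--Minkowski--Fubini argument. The decisive point is that $\nabla^2 W$ has a singularity of order exactly $n$ by Assumption~\textbf{(\ref{assump_A3})} (case $k=2$: $|\nabla^2 W(z)|\le 2C_W^3\,d(z,0)^{-n}$ on the fundamental domain), which is one power too strong to be integrated, but this lost power is recovered from a Lipschitz-type bound on $\Phi$. Writing $\xi_t$, $t\in[0,1]$, for a minimizing geodesic from $x$ to $y$ in $\Tn$, one has $|\Phi(x)-\Phi(y)|\le d(x,y)\int_0^1|\nabla\Phi(\xi_t)|\,dt$, so \eqref{eq-defK} yields
\[
|\mathcal{K}[\Phi](x)|\ \le\ 2C_W^3\int_0^1\!\!\int_{\Tn}\frac{|\nabla\Phi(\xi_t)|}{d(x,y)^{n-1}}\,d\rho_0(y)\,dt ,
\]
whose kernel $d(x,y)^{-(n-1)}$ is integrable over $\Tn$ since $n-1<n$.

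From here I would proceed as in a Hardy--Littlewood--Sobolev estimate. Split $d(x,y)^{-(n-1)}=d(x,y)^{-(n-1)/\gamma}\,d(x,y)^{-(n-1)/\gamma'}$ (with $\gamma'$ the conjugate exponent; when $\gamma=1$ no splitting is needed) and apply H\"older in the $y$-variable; since $\rho_0\in L^\infty$, the factor $\big(\int_{\Tn}d(x,y)^{-(n-1)}d\rho_0(y)\big)^{1/\gamma'}$ is a finite constant, uniform in $x$. Raising to the power $\gamma$, integrating against $d\rho_0(x)$, and using Minkowski's integral inequality to move the $t$-average outside the $L^\gamma(d\rho_0)$-norm, the claim reduces to a uniform-in-$t$ bound $\iint_{\Tn\times\Tn}\frac{|\nabla\Phi(\xi_t)|^\gamma}{d(x,y)^{n-1}}\,d\rho_0(y)\,d\rho_0(x)\le C\|\nabla\Phi\|_{L^\gamma(d\rho_0)}^\gamma$. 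To get this I would substitute $y=x+z$ with $z$ in the fundamental domain $[-\tfrac12,\tfrac12)^n$, so that $d(x,y)=|z|$ and $\xi_t=x+tz$, bound $\rho_0(y)\le\|\rho_0\|_{L^\infty}$, and then invoke Fubini, the translation invariance of Lebesgue measure, and the two-sided bounds on $\rho_0$: the inner integral in $x$ becomes a constant times $\|\nabla\Phi\|_{L^\gamma(d\rho_0)}^\gamma$, while the remaining integral $\int_{[-1/2,1/2)^n}|z|^{-(n-1)}\,dz$ converges.

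Collecting constants gives $\|\mathcal{K}[\Phi]\|_{L^\gamma(d\rho_0)}\le C(n,\gamma,C_W,\rho_0)\,\|\nabla\Phi\|_{L^\gamma(d\rho_0)}$, hence the asserted boundedness $\mathring{W}^{1,\gamma}\to L^\gamma$ (the $\rho_0$-weighted and unweighted norms being equivalent), and linearity of $\mathcal{K}$ is immediate from the formula. The estimate is essentially routine once the order-lowering observation is made; the only place where care is needed is the torus geometry — justifying the geodesic increment bound and the change of variables $y\mapsto z$ away from the cut locus of $\rho_0$-measure zero, and noting that $|\nabla\Phi(\xi_t)|$ is meaningful along a.e.\ such geodesic because $\Phi=\nabla\phi$ with $\phi\in W^{2,\gamma}$. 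That bookkeeping is the only real obstacle; the rest is Fubini and power-counting of the singularity.
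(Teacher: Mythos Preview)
Your proposal is correct and follows essentially the same route as the paper's proof: the geodesic increment bound to lower the kernel from $d^{-n}$ to $d^{-(n-1)}$, the H\"older splitting $d^{-(n-1)}=d^{-(n-1)/\gamma}d^{-(n-1)/\gamma'}$, Minkowski's integral inequality to pull out the $t$-average, and the change of variables $y=x+z$ with Fubini and the $L^\infty$ bound on $\rho_0$ are exactly the steps the paper uses. The paper does not explicitly flag the cut-locus or a.e.-differentiability issues you mention, but otherwise the arguments match line for line.
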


\begin{lemma}\label{Hessian} $L$ in \eqref{eq-L} is bounded linear map from $\mathring{W}^{2,\gamma}$ to $ \mathring{L}^\gamma$ for all $\gamma\in(1,p]$. For $\xi_1\in \mathring{W}^{2,\gamma}$ and $\xi_2\in \mathring{W}^{2,\gamma'}$ with $1/\gamma +1/\gamma'=1$ and $\gamma \in [\frac{p}{p-1},p]$, 
\be \label{eq-657} \int (L\xi_1)\xi_2 \, \rho_* dx = \int \xi_1 (L\xi_2) \, \rho_*dx. \end{equation}
\end{lemma}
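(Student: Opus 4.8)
I would split $L\xi = P_{\mu_0}(\mathcal{L}\xi)$, where $\mathcal{L}\xi := -\rho_0^{-1}\Div(\rho_0\nabla\xi) + \nabla^2 V\cdot\xi + \cK[\xi]$, and estimate $\|\mathcal{L}\xi\|_{L^\gamma}$ term by term. Since $\rho_0\in\mathring{W}^{3,\infty}$ is bounded above and below, $\rho_0^{-1}\nabla\rho_0\in L^\infty$ and $\rho_0^{-1}\Div(\rho_0\nabla\xi) = \Delta\xi + \rho_0^{-1}\nabla\rho_0\cdot\nabla\xi$ is controlled by $\|\xi\|_{W^{2,\gamma}}$; the term $\nabla^2 V\cdot\xi$ is bounded by $\|\nabla^2 V\|_{L^\infty}\|\xi\|_{L^\gamma}$, finite because $V$ is analytic, hence smooth; and $\cK[\xi]$ is controlled by $\|\nabla\xi\|_{L^\gamma(d\rho_0)}$ by Lemma \ref{HLS}. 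Because $L^\gamma$ and $L^\gamma_{\mu_0}$ have equivalent norms, $\mathcal{L}\xi\in L^\gamma_{\mu_0}$, and the bounded projection $P_{\mu_0}$ of \eqref{eq-bddproj} sends it into $\mathring{L}^\gamma$ with the asserted bound $\|L\xi\|_{\mathring{L}^\gamma}\le C\|\xi\|_{\mathring{W}^{2,\gamma}}$.

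\textbf{The symmetry identity.} The key step is to erase the projection from the pairing. For $g\in L^{\gamma'}$ and $\xi_1=\nabla\psi_1\in\mathring{W}^{1,\gamma}$, the $\mu_0$-divergence-free part $g_0 := g - P_{\mu_0}g$ lies in $L^{\gamma'}$, and testing $\Div(\rho_0 g_0)=0$ against $\psi_1$ — legitimate since $\rho_0 g_0\in L^{\gamma'}$ and $C^\infty$ is dense in $W^{1,\gamma}$ — gives $\int\langle\xi_1,g_0\rangle\,d\rho_0 = 0$, hence $\int\langle\xi_1,P_{\mu_0}g\rangle\,d\rho_0 = \int\langle\xi_1,g\rangle\,d\rho_0$. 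Applying this with $g=\mathcal{L}\xi_2\in L^{\gamma'}$ (the first part, with exponent $\gamma'$) yields $\int\langle\xi_1,L\xi_2\rangle\,d\rho_0 = \int\langle\xi_1,\mathcal{L}\xi_2\rangle\,d\rho_0$, and symmetrically $\int\langle L\xi_1,\xi_2\rangle\,d\rho_0 = \int\langle\mathcal{L}\xi_1,\xi_2\rangle\,d\rho_0$ when $\xi_1\in\mathring{W}^{2,\gamma}$. It then remains to show $\int\langle\mathcal{L}\xi_1,\xi_2\rangle\,d\rho_0 = \int\langle\xi_1,\mathcal{L}\xi_2\rangle\,d\rho_0$: for the diffusion part one integrates by parts to the symmetric form $\int\langle\nabla\xi_1,\nabla\xi_2\rangle\,d\rho_0$ (valid since $\rho_0\in C^1$, so $\rho_0\nabla\xi_1\in W^{1,\gamma}$ pairs with $\xi_2\in W^{1,\gamma'}$); $\int\nabla^2 V[\xi_1,\xi_2]\,d\rho_0$ is symmetric as $\nabla^2 V$ is; and for the interaction part, writing $\int\langle\cK[\xi_1],\xi_2\rangle\,d\rho_0$ as a double integral and symmetrizing by Fubini together with $\nabla^2 W(z)=\nabla^2 W(-z)$ (from \textbf{(A2)}) yields the symmetric expression $\tfrac12\iint\langle\xi_1(x)-\xi_1(y),\nabla^2 W(x-y)(\xi_2(x)-\xi_2(y))\rangle\,d\rho_0\,d\rho_0$ of Remark \ref{remark-67}(1); here Fubini is justified, exactly as in the proof of Lemma \ref{HLS}, by the Coulomb bound \textbf{(A3)}.

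\textbf{Extension to $\mathring{W}^{1,\gamma}$, and the main obstacle.} The identity is now established for $\xi_1\in\mathring{W}^{2,\gamma}$, $\xi_2\in\mathring{W}^{2,\gamma'}$; to reach $\xi_1\in\mathring{W}^{1,\gamma}$ I would read the left-hand side as the bilinear form $B[\xi_1,\xi_2] = \int(\langle\nabla\xi_1,\nabla\xi_2\rangle + \nabla^2 V[\xi_1,\xi_2] + \cK[\xi_1]\cdot\xi_2)\,d\rho_0$, continuous in $\xi_1$ in the $\mathring{W}^{1,\gamma}$ topology by Lemma \ref{HLS}, whereas the right-hand side $\int\langle\xi_1,L\xi_2\rangle\,d\rho_0$ is continuous in $\xi_1$ already in the $\mathring{L}^\gamma$ topology; since they coincide on $\mathring{W}^{2,\gamma}$, which is dense in $\mathring{W}^{1,\gamma}$ (it contains every smooth gradient vector field), they coincide on $\mathring{W}^{1,\gamma}$. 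I expect no genuine obstacle in any of this: the content is bookkeeping — matching dual Lebesgue exponents (which is exactly why the statement pairs $\mathring{W}^{1,\gamma}$ with $\mathring{W}^{2,\gamma'}$), justifying the two integrations by parts against non-smooth fields by density, and verifying absolute convergence of the interaction double integral near the diagonal. The $W^{3,\infty}$ assumption on $\rho_0$ is much stronger than needed here (Lipschitz regularity suffices) and is present for later use.
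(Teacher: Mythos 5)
Your proof is correct and follows essentially the same route as the paper's (much terser) argument: boundedness term by term via Lemma \ref{HLS} and the bounded projection \eqref{eq-bddproj}, and symmetry via the identities recorded in Remark \ref{remark-67} combined with an approximation/density step. You have simply filled in the details that the paper leaves implicit, including the correct reading of the left-hand side as the bilinear form when $\xi_1$ only lies in $\mathring{W}^{1,\gamma}$.
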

\begin{proof}
 Note that, under the assumed condition on $\rho_*$, $\Phi\mapsto \rho_*\nabla \Phi $ is a bounded linear map from $\mathring{W}^{2,\gamma}$ to $W^{1,\gamma}$. The remaining in the proof of the boundedness of $L$ follows directly by Lemma \ref{HLS}  and the boundedness of the projection $P_{\mu_*}$ \eqref{eq-bddproj}. The symmetry in \eqref{eq-657} follows by Remark \ref{remark-67} and an approximation.
\end{proof}	

\begin{lemma}[Calderón-Zygmund]  \label{lemma-CZ} For $p>n$, let $\rho_*$ be positive density in ${W}^{1,p}$. For $\gamma \in [\frac{p}{p-1},p]$, suppose given functions $\Phi$, $h \in \mathring{L}^{\gamma}$ satisfy $L \Phi =h$ when it is tested against smooth gradient vector fields. More precisely, for all $\Psi = \nabla \psi$ with $\psi\in C^{\infty}(\Tn)$, there holds 
\begin{equation}\label{eq-distribution15} \int \Phi (L \Psi) \, \rho_* dx = \int h\Psi \, \rho_* dx  .\end{equation} 
Then $\Phi $ belongs to $\mathring{W}^{2,\gamma}$ and 
\[\Vert \Phi \Vert_{\mathring{W}^{2,\gamma}} \le C(\Vert h\Vert _{\mathring{L}^{\gamma}}+\Vert \Phi \Vert_{\mathring{L}^{\gamma}}), \]
for some $C=C(\gamma,n, p,\rho_*,V,W)<\infty$.    \begin{proof}
Throughout the proof, we will use the following standard Calderon-Zygmund (abbreviated as CZ below) estimate for the Laplacian on $\Tn$: Let $k\in \mathbb{Z}$ and $\gamma \in (1,\infty)$. If a distribution on $\Tn$, namely $\alpha \in \mathcal{D}'(\Tn)$, satisfies 
\[  \alpha  (\Delta f)= \langle f,h\rangle_{(W^{k,\gamma})^*\times W^{k,\gamma}} \quad \text{for all } f\in C^\infty(\Tn)    , \] for some $h\in W^{k,\gamma}$, then $\alpha \in W^{k+2,\gamma}$ and 
\[\Vert \alpha \Vert_{W^{k+2,\gamma}} \le C (\Vert h \Vert_{W^{k,\gamma}} + |\alpha(1)|) ,\] for some $C=C(n,k,\gamma)<\infty $.

\medskip 

Let us first give a proof in the simpler case when $\rho _*\in W^{1,\infty}$ and $\gamma\in(1,\infty)$. In what follows, the constant $C$ depends on $\gamma$, $n$, $\Vert \rho_* \Vert _{W^{1,\infty}}$, $\Vert \rho_*^{-1}\Vert_{L^\infty}$, $V$ and $W$. For $\Psi=\nabla \psi $ with  $\psi \in C^\infty (\Tn)$,  \eqref{eq-distribution15} reads
\begin{equation} \label{eqdist00}\int (-\Delta\Psi -\nabla \log \rho_*\cdot \nabla \Psi )\Phi\rho_*+(\nabla^2V [\Psi,\Phi] + \mathcal{K}[\Psi]\Phi - \Psi h ) \rho_*\, dx  =0 .\end{equation}Here we used $\int \Phi P_{\mu_*} \Phi'  \, d\mu_*  = \int \Phi \Phi'\, d\mu _* $ if $\Phi \in \mathring{L}^\gamma$. We obtain that the following equation holds in the sense of distribution after testing with $\nabla\psi$: for $\Phi=\nabla \phi$, $\phi \in W^{1,\gamma}$, and $h=(h_1,\ldots, h_n)$,
\begin{equation}\label{eq-15660} \Delta ( \nabla_i (\rho_*\nabla _i\phi )) = \nabla_i [ \nabla_j (\nabla _i \rho_*  \nabla_j \phi   )+ \rho_*\nabla^2_{ik} V  \nabla_k \phi    +\rho_* \mathcal{K}[\nabla \phi ]_i -h_i\rho_*] .\end{equation} 
Here $\nabla_i (\rho_* \mathcal{K}[\nabla \phi]_i )=\nabla\cdot (\rho_* \mathcal{K}[\nabla \phi])$ should be understood as 
\[ \langle \psi   , \nabla \cdot (\rho_* \mathcal{K}[\nabla \phi]) \rangle :=  \int  \mathcal{K}[\nabla \psi ] \nabla \phi \, d\rho_*.\]
In view of Lemma \ref{HLS} and $\nabla \phi \in \mathring{L}^\gamma$, $\nabla\cdot (\rho_* \mathcal{K}[\nabla \phi])\in (W^{2,\gamma'})^*=W^{-2,\gamma}.$
The right hand side of \eqref{eq-15660} belongs to an element in $W^{-2,\gamma}$. The CZ estimate yields $\nabla \cdot (\rho_*\nabla \phi ) \in L^\gamma $ and 
\begin{equation}\begin{aligned} \label{eq-1870} \Vert \nabla \cdot (\rho_*\nabla \phi )\Vert_{L^\gamma }&\le C (\Vert \nabla \phi  \Vert _{L^\gamma}+ \Vert h \rho_*\Vert _{W^{-1,\gamma}}) \\ & \le C(\Vert \nabla \phi \Vert _{L^\gamma}+ \Vert h \Vert_{L^\gamma}).\end{aligned}\end{equation} 
Let us denote $\nabla\cdot (\rho_* \nabla \phi)=:f \in L^\gamma $. Testing $\psi/\rho_*$ for $\psi \in C^\infty$,  
\[\int  - \nabla \psi \cdot \nabla \phi + \psi  \rho_*^{-1} \nabla  \rho_* \cdot \nabla \phi - \psi \rho_*^{-1} f   \, dx =0 .\]
This shows that $\phi\in W^{1,\gamma}$ is an (energy) weak solution to \begin{equation} \label{eq-energyweak1}\Delta \phi =- \nabla \phi \cdot \nabla \log \rho_* +f \rho_*^{-1}  .\end{equation} The CZ estimate and \eqref{eq-1870} give that $\phi \in W^{2,\gamma}$ and 
\begin{equation} \label{eq-9850}\begin{aligned}  \Vert \nabla \phi \Vert _{W^{1,\gamma}} & \le C(\Vert \nabla \phi\cdot \nabla \log \rho_* \Vert _{L^\gamma}+ \Vert f \rho_*^{-1}\Vert_{L^\gamma})\\ 
&\le C (\Vert \nabla \phi \Vert _{L^\gamma}+\Vert h \Vert_{L^\gamma }). \end{aligned} \end{equation}

In view of the improved regularity $\nabla \phi \in W^{1,\gamma}$, (instead of \eqref{eq-15660}) we view \eqref{eqdist00} as follows: in distributional sense, $\phi\in W^{2,\gamma}$ satisfies 
\begin{equation}\label{eq560}
   \begin{aligned}
       \Delta(\rho_* \Delta\phi)&= \nabla_i(\nabla_i \rho_* \Delta \phi)-\nabla_i(\nabla_j\rho_* \nabla_{ij}\phi) \\ & \hspace{2cm} +  \nabla_i(\rho_* \nabla^2_{ij}V\nabla _j \phi + \rho_* \mathcal{K}[\nabla \phi]_i - h_i \rho_*).
   \end{aligned} 
\end{equation}
Viewing the right hand side as an element in $W^{-1,\gamma}$, CZ estimate gives
\begin{equation} \label{eq5602}\Vert \rho _*\Delta \phi \Vert _{W^{1,\gamma}} \le C (\Vert \nabla \phi \Vert _{W^{1,\gamma}}+ \Vert h \Vert _{L^\gamma}).\end{equation}
For $\rho_*\Delta \phi =: g \in W^{1,\gamma}$, by the CZ estimate for $\Delta \phi = \rho_*^{-1} g \in W^{1,\gamma}$, \eqref{eq-9850} and \eqref{eq5602}, $\phi \in W^{3,\gamma}$ and we have desired estimate
\begin{equation} \label{eq5603}  \Vert \nabla  \phi \Vert_{W^{2,\gamma}} \le C\Vert g \Vert _{W^{1,\gamma}}  \le C (\Vert \nabla \phi\Vert _{L^{\gamma}}+ \Vert h \Vert _{L^\gamma}).\end{equation}

\medskip

Now suppose $n<p<\infty$ and $\gamma\in [\frac{p}{p-1},p]$. In what follows, the constant $C$ depends on $p$, $\gamma$, $n$, $\Vert \rho_* \Vert _{W^{1,p}}$, $\Vert \rho_*^{-1}\Vert_{L^\infty}$, $V$ and $W$.  In view of the Holder inequality, the right hand side of \eqref{eq-15660} lies in $W^{-2,\gamma_1}$, where $\frac1 {\gamma_1}= \frac{1}{p}+ \frac{1}{\gamma} \in (\frac 1\gamma , 1]$. Suppose $\gamma_1\neq1 $ (i.e., $\gamma>\frac{p}{p-1}$). Following a similar argument as in the previous case, an estimate corresponding to \eqref{eq-9850} gives
\[ \Vert \nabla \phi \Vert_{L^{\gamma_2}}\le C  \Vert \nabla \phi \Vert_{W^{1,\gamma_1}}\le C (\Vert \nabla \phi \Vert _{L^\gamma}+\Vert h\rho_* \Vert_{W^{-1,\gamma_1}}).\]
By Sobolev (or Morrey) embedding, 
\[ \Vert \nabla \phi \Vert _{L^{\gamma_2}}\le C_{\gamma_2,\gamma_1,n}\Vert \nabla \phi \Vert _{W^{1,\gamma_1}} ,\] 
for all $\gamma_2 \in [\gamma_1,\infty)$ which satisfies $\frac 1{\gamma_2} \ge  \frac 1 {\gamma_1} -\frac1 n$. Since $\frac 1 {\gamma_1} -\frac1 n= \frac 1 \gamma + \frac 1 p - \frac 1 n < \frac 1 \gamma $, this in particular implies $\gamma_2$ can be chosen to be larger than $\gamma$ by a definite amount. Now replacing $\gamma$ by $\gamma_2$ and iterating above argument (finitely many times),  we have $\nabla \phi \in W^{1,\gamma}$ and \[\Vert \nabla \phi \Vert _{W^{1,\gamma}} \le C (\Vert \nabla \phi \Vert _{L^\gamma}+ \Vert h\rho_*  \Vert _{W^{-1,\gamma}}).\]
Next, move on to the argument in \eqref{eq560}, we see the right hand side as an element in $W^{-1,\gamma_1}$. Instead of \eqref{eq5602}, we get 
\[ \Vert \rho_* \Delta \phi \Vert _{W^{1,\gamma_1}}\le C(\Vert \nabla \phi \Vert _{W^{1,\gamma}}+ \Vert h \rho_*\Vert _{L^{\gamma_1}}).\] This shows $\Delta \phi \in W^{1,\gamma_1}$ and instead of \eqref{eq5603}, we get 
\[ \Vert \nabla \phi \Vert _{W^{1,\gamma_2}} \le C \Vert\nabla  \phi \Vert_{W^{2,\gamma_1}}\le C (\Vert \nabla \phi \Vert _{L^{\gamma}}+ \Vert h \rho_*\Vert _{L^{\gamma_1}}). \] After iterating this finite times, we obtain desired estimate  
\[ \Vert \nabla \phi \Vert _{W^{2,\gamma}}\le C (\Vert \nabla \phi \Vert _{W^{1,\gamma}}+\Vert h \rho_* \Vert _{L^\gamma})\le C(\Vert \nabla \phi \Vert_{L^\gamma}+\Vert h \Vert_{L^\gamma}).\]
Finally, we address the remaining case $\gamma=\frac{p}{p-1}$ (i.e., $\gamma_1=1$). In this case, the CZ estimate is not applicable. By the duality argument and Sobolev embedding, $L^1$ embeds into $W^{-1,\bar \gamma_1}$ for all $1\le \bar \gamma_1< \frac{n}{n-1}$ (here $\frac{n}{n-1}=\infty$ if $n=1$).  Viewing the right hand side of \eqref{eq-15660}  as an element in $W^{-3,\bar \gamma_1}$, the CZ estimates yields 
\[ \Vert \nabla \phi \Vert _{L^{\bar \gamma_1}} \le C(\Vert \nabla \phi \Vert _{L^\gamma}+\Vert h \rho_* \Vert_{W^{-2,\bar \gamma_1}}).\]
We choose $\bar \gamma_1$ close to $\frac{ n}{n-1} $ so that $\bar \gamma_1 > \gamma = \frac{p}{p-1}$. This reduces the case to the previous case $\gamma \in (\frac{p}{p-1},p]$ and the remaining follows similarly.  \end{proof} 
 \end{lemma}

For equations on bounded domains with suitable boundary conditions or equations on compact manifold with boundary (this is our case), the Freholmness Theorem \ref{thm-fredholm} is a standard result once  Calderon-Zygmund type estimate (e.g., Lemma \ref{lemma-CZ}) is attained and $L$ is (formally) self-adjoint. It follows by an argument which uses the compactness and the duality. Here we give its direct proof for readers' convenience. 
	
	\begin{proof} [Proof of Theorem \ref{thm-fredholm}]

	We already observed that $L$ is a bounded linear map from Lemma \ref{Hessian}.     Suppose the dimension of $\ker L$ is infinite. In view of the Riesz lemma, there exists a sequence of kernel elements $\{u_i\}\subset  \mathring{W}^{2,\gamma}$ such that $\Vert u_i \Vert _{\mathring{L}^\gamma}=1$ and $\Vert u_i - u_j \Vert _{\mathring{L}^\gamma} \ge 1/2$ for all $i\neq j$. By Lemma \ref{lemma-CZ} (Calderon-Zygmund), the sequence is uniformly bounded in $\mathring{W}^{2,\gamma}$ and by the Rellich–Kondrachov theorem there is a subsequence which converges in $\mathring{L}^\gamma$. This is a contradiction.

	Next, we prove the closedness of $\mathrm{Im\,}L$ in $\mathring{L}^\gamma$. Let us consider the quotient Banach space $\mathring{W}^{2,\gamma}/\ker L$, whose norm is given by $\Vert f+\ker L\Vert = \mathrm{inf}_{g\in \ker L} \Vert f-g \Vert$. 	Using Lemma \ref{lemma-CZ} and a compactness argument, we can prove that there exists $C'<\infty$ such that 
	\[ \Vert f +\ker L \Vert_{\mathring{W}^{2,\gamma}/\ker L} \le  C' \Vert L f\Vert_{L^\gamma} \] 
    for all $f\in \mathring{W}^{2,\gamma}$. 	Indeed, if this estimate is not true, there exists a sequence $f_i \in \mathring{W}^{2,\gamma}$ such that $\Vert f_i \Vert_{\mathring{W}^{2,\gamma}}=1$, $\mathrm{dist}_{\mathring{W}^{2,\gamma}}(f_i, \ker L)=1$, and $\Vert L f_i\Vert_{\mathring{L}^\gamma}\le 1/i$. By choosing a further subsequence, we may assume that $f_i$ converges to $f_\infty$ weakly in $\mathring{W}^{2,\gamma}$ and strongly in $\mathring{L}^\gamma$. Since $L f_i \to 0$ in $\mathring{L}^\gamma$ and $L$ is (weakly) continuous, we conclude that $L f_\infty=0$.  By Lemma \ref{lemma-CZ},
    \[ 1= \Vert f_i +\ker L\Vert _{\mathring{W}^{2,\gamma}/\ker L}\le \Vert f_i- f_\infty \Vert_{\mathring{W}^{2,\gamma}} \le C ( i^{-1} + \Vert f_i -f_\infty \Vert _{\mathring{L}^\gamma}),\] 
    and this is a contradiction since the right hand side converges to $0$. To finish the closedness of the image of $L$.  Suppose there exists a sequence $Lf_i$ converging to $h\in \mathring{L}^\gamma$. Using the inequality, we conclude $f_i +\ker L$ is Cauchy in the quotient Banach space and thus it converges to some $f_\infty +\ker L$. By continuity of $L$, $L(f_\infty)=h$.  
	
Next, suppose $g\in \ker L$. Then $Lg=0$ in distribution sense and thus $g\in W^{2,q}$ for all $q\in[ \frac{p}{p-1},p]$ by Lemma \ref{lemma-CZ}. This shows (1). 

Finally, it remains to check the assertion on $\textrm{Im\,}L$ in (2) as the rest assertion $\dim \mathrm{coker\,}L=\dim \ker L$ follows directly from there. Suppose $f\in \mathrm{Im\,}L$ with $f=Lh$. For all $g\in \ker L$, $\int (Lh)g\, d\rho_*=\int h(Lg)\, d\rho_*=0$ by \eqref{eq-657} and this shows one direction of inclusion. On the contrary, suppose $\mathrm{Im\,} L$ is a proper closed subspace of the right hand side. By Hahn-Banach theorem, there is $g\in (\mathring{L}^{\gamma})^*=\mathring{L}^{\gamma'}$ such that $\int (Lh)g\, d\rho_*=0$ for all $h \in \mathring{W}^{2,\gamma}$, but $\int f'g\, d\rho_*=1$ for some function $f'$ in the right hand side. However, by Lemma \ref{lemma-CZ}, such a $g$ lies in $\mathring{W}^{2,p}$ and thus $g\in \ker L$. This implies $\int f' g\, d\rho_*=0$, which is a contradiction.


		
	\end{proof}

\section{\L ojasiewicz inequality } \label{sec-LS}
Once the analyticity of Euler-Lagrange operator and the Fredholmness of the second variation operator are obtained, the  {\L}ojasiewicz-Simon inequality for functional defined on Banach space follows by standard arguments in the literature \cite{MR4129355,MR4199212,MR1609269,MR1986700,MR1800136,MR4129083,MR2226672}. Here we present a straightforward derivation following Jendoubi \cite{MR1609269}, which simplifies the original proof of \cite{S0}.

\medskip

As before, let $\rho_*\in W^{1,p}$ be positive probability density for some $p>n$. For each $\gamma\in [\frac{p}{p-1},p]$, consider $L:\mathring{W}^{2,\gamma}\to \mathring{L}^\gamma $. By Lemma \ref{lemma-CZ}, each element in its kernel belongs $\mathring{W}^{2,q}$ for all $q\in[\frac{p}{p-1},p]$. This, in particular, implies $\ker L\subset \mathring{W}^{2,\gamma}$ is the same for all $\gamma \in [\frac{p}{p-1},p].$ Let us consider the orthogonal projection on $\ker L$ in $\mathcal{H}_{\mu_*}=\mathring{L}^2 $. Explicitly, fix a basis $\{\varphi_i\}_{i=1}^m$ of $\ker L$, which is orthonormal in $\mathcal{H}_{\mu_*}$ and define
\begin{equation}\label{eq-def-kerproj}\Pi u := \sum_{i=1}^m \langle u,\varphi_i \rangle_{\mathcal{H}_{\mu_*}}\varphi_i. \end{equation}
Since $\varphi_i$ and $\rho_* $ are regular enough, this $\Pi$ is indeed a bounded projection from $\mathring{W}^{2,\gamma}$ onto $\ker L$ for all $\gamma\in[\frac{p}{p-1},p]$. From now on we fix this orthonormal basis $\{\varphi_i\}_{i=1}^m$ and $\Pi$. In view of the definition of $\Pi$ and Theorem \ref{thm-fredholm} (Fredholmness), we immediately have the following: 
\begin{lemma}  \label{lemma-isomo} For each $\gamma\in [\frac{p}{p-1},p]$, $\mathscr{L}:\mathring{W}^{2,\gamma } \rw \mathring{L}^\gamma $, defined by
\begin{equation*}
\mathscr{L}u = \Pi u + L u,
\end{equation*}	
is a Banach space isomorphism.  
\end{lemma}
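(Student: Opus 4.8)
\textbf{Proof plan for Lemma~\ref{lemma-isomo}.}
The plan is to show that $\mathscr{L} = \Pi + L$ is a bounded bijection between the Banach spaces $\mathring{W}^{2,\gamma}$ and $\mathring{L}^\gamma$; boundedness of the inverse is then automatic by the open mapping theorem. Boundedness of $\mathscr{L}$ itself is immediate: $L$ is bounded by Lemma~\ref{Hessian}, and $\Pi$ is a bounded (finite-rank) projection onto $\ker L$ by the remarks preceding the statement, since the basis $\{\varphi_i\}$ and $\rho_0$ are regular enough that each pairing $u\mapsto \langle u,\varphi_i\rangle_{\mathcal{H}_{\mu_0}}$ extends to a bounded functional on $\mathring{W}^{2,\gamma}$ (indeed on $\mathring{L}^\gamma$). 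So the content is injectivity and surjectivity.

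For \emph{injectivity}: suppose $\mathscr{L}u = \Pi u + Lu = 0$. Since $\Pi u \in \ker L$, applying $L$ gives $L(\Pi u) = 0$, whence $L^2 u = L(Lu) = -L(\Pi u) = 0$; but more directly, pair against an arbitrary $g \in \ker L$ in $\mathcal{H}_{\mu_0}$. By Theorem~\ref{thm-fredholm}(2) (i.e.\ $\mathrm{Im}\,L \perp \ker L$ in $L^2_{\rho_0}$, via \eqref{eq-657}), we have $\langle Lu, g\rangle_{\mathcal{H}_{\mu_0}} = \langle u, Lg\rangle_{\mathcal{H}_{\mu_0}} = 0$, so $0 = \langle \Pi u + Lu, g\rangle_{\mathcal{H}_{\mu_0}} = \langle \Pi u, g\rangle_{\mathcal{H}_{\mu_0}}$. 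Taking $g = \varphi_i$ and using orthonormality shows $\Pi u = 0$, and then $Lu = 0$, so $u \in \ker L$. But $u = \Pi u = 0$.

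For \emph{surjectivity}: let $h \in \mathring{L}^\gamma$ be arbitrary. Decompose $h = \Pi h + (h - \Pi h)$ in $\mathcal{H}_{\mu_0}$ after noting $\Pi h \in \ker L \subset \mathring{W}^{2,\gamma}$. By Theorem~\ref{thm-fredholm}, $\mathrm{Im}\,L = \{f \in \mathring{L}^\gamma : \int f g\,d\rho_0 = 0 \ \forall g \in \ker L\}$, and $h - \Pi h$ lies in this space since $\langle h - \Pi h, \varphi_i\rangle_{\mathcal{H}_{\mu_0}} = 0$ for every $i$ by construction of $\Pi$. Hence there is $v \in \mathring{W}^{2,\gamma}$ with $Lv = h - \Pi h$. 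Replacing $v$ by $v - \Pi v$ (still in $\mathring{W}^{2,\gamma}$, with the same image under $L$ since $\Pi v \in \ker L$), we may assume $\Pi v = 0$. Now set $u := v + \Pi h \in \mathring{W}^{2,\gamma}$. Then $\Pi u = \Pi v + \Pi h = \Pi h$ (as $\Pi$ is a projection onto $\ker L$ and $\Pi h \in \ker L$), and $Lu = Lv = h - \Pi h$, so $\mathscr{L}u = \Pi u + Lu = \Pi h + (h - \Pi h) = h$.

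I do not expect a genuine obstacle here; the only point requiring care is bookkeeping the two orthogonality relations supplied by Theorem~\ref{thm-fredholm}(2) — that $\mathrm{Im}\,L$ is exactly the $L^2_{\rho_0}$-orthogonal complement of $\ker L$ inside $\mathring{L}^\gamma$ — together with the fact that $\Pi$ is the $\mathcal{H}_{\mu_0}$-orthogonal projection, so that the ranges and kernels of $\Pi$ and $L$ are in direct-sum position. Everything else is the standard "finite-dimensional kernel/cokernel plus index zero" argument.
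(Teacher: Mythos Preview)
Your proposal is correct and is exactly the argument the paper has in mind: the paper does not give a proof at all, stating only that the lemma follows ``in view of the definition of $\Pi$ and Theorem~\ref{thm-fredholm} (Fredholmness)''. Your write-up simply unpacks this, using precisely those two ingredients---the index-zero characterization $\mathrm{Im}\,L = (\ker L)^{\perp_{L^2_{\rho_0}}}$ from Theorem~\ref{thm-fredholm}(2) and the fact that $\Pi$ is the $\mathcal{H}_{\mu_0}$-orthogonal projection onto $\ker L$---to verify injectivity and surjectivity directly.
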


Let us define the map $\cN: \mathring{W^{2,p}} \rw \mathring{L^p}$, by 
$$\cN u=\Pi u + \cM u.
$$ 
Then, by Theorem \ref{thm-analyticity2}, $\cN$ is analytic on a neighborhood of $0$.  Note that $D\mathcal{N}(0)=\mathscr{L}$ is an isomorphism by Lemma \ref{lemma-isomo}.
Thanks to the analytic (local) inverse function theorem, there exist a local analytic inverse map $\Xi$ and two neighborhoods of $0$, namely $W_1(0)\subset \mathring{W}^{2,p}$ and $W_2(0)\subset \mathring{L}^p$, such that  
\begin{align}
&\Xi:   W_2(0) \rw W_1(0) ,                \nn  \\
&\cN(\Xi(f))=f \quad \forall f \in W_2(0),                \nn         \\
&\Xi(\cN(u))=u \quad \forall u \in W_1(0), \nn \\
&\Vert \Xi(f)-\Xi(g) \Vert_{\mathring{W}^{2,p}} \leq C_0 \Vert f-g \Vert_{\mathring{L}^p} \quad \forall f,g \in W_2(0). 
\end{align}	

In the construction above, the point we assumed $p>n$ is to make sure $\Xi$ is an analytic function. In what follows, we choose smaller $W'_1(0)\subset W_1(0)\subset \mathring{W}^{2,p}$ and $W'_2(0)\subset W_2(0)\subset \mathring{L}^p$ to have other estimates in $\mathring{W}^{2,2}$ and $\mathring{L}^2$. Roughly, this step is needed as we eventually need to estimate $\Vert \mathcal{M}u\Vert _{\mathring{L}^2} $ from below in the proof of Theorem \ref{thm-LS-G}. 

\begin{lemma} \label{lemma-M-L}For given $\varepsilon>0$, by choosing $W'_1(0)=B_{\sigma_0} (0)\subset \mathring{W}^{2,p}$ small, we have 
\[\Vert (\mathcal{M}u-Lu)-(\mathcal{M}v-Lv)\Vert _{\mathring{L}^2}\le \varepsilon \Vert u -v\Vert_{\mathring{W}^{2,2} } \]

    \begin{proof}
Let us denote $L_{\Phi}:= d\mathcal{M}(\Phi)$. In particular, $L_0=L$. Note that $L_{\Phi}$ can be viewed as a bounded linear map from $\mathring{W}^{2,2}$ to $\mathring{L}^2$ (c.f. Lemma \ref{Hessian}). We claim that by choosing $\sigma_0>0$ small, 
\[  \sup_{\Phi \in W_1(0)}\Vert L_{\Phi}-L\Vert _{\mathscr{L}(\mathring{W}^{2,2},\mathring{L}^2) }\le \varepsilon.\]
If the claim is true, the result directly follows since
\begin{align*} 
&\|(\cM - L)u - (\cM - L)v \| _{\mathring{L}^2}\\&= \bigg \| \int_0^1 (d \mathcal{M}(v+t(u-v))- L) [u-v] dt \bigg \| _{\mathring{L}^2}\leq   \epsilon \|u-v \|_{\mathring{W}^{2,2}} .
\end{align*}

\medskip 

It remains to prove the claim.  By \eqref{eq-dMV},\eqref{eq-dMU},\eqref{eq-dMW}, we have
\begin{align*}
L_{\Phi} [\Psi] = P_{\mu_*}( -\rho_* ^{-1} \Div ((A (\na \Psi )A )\rho_* ) + \na^2 V (x+\Phi(x))  \cdot \Psi + \cK_{\Phi} [\Psi])
\end{align*}
where $A = (I+\nabla \Phi )^{-1}$ and  $\cK_{\Phi} [\Psi ]:=\int (\na^2 W(x-y + \Phi(x)-\Phi(y)) \cdot (\Psi(x)-\Psi(y)) d\rho_* (y)$. Thus, 
\[\| L_{\Phi} \Psi  - L \Psi \|_{\mathring{L}^2 } \leq  \| P_{\mu_*}(\cE ^{\cU} _{\Phi} [\Psi] + \cE ^{\cV} _{\Phi} [\Psi] + \cE ^{\cW} _{\Phi} [\Psi] ) \|_{\mathring{L}^2} ,  \]
where
\begin{align*}
&\cE ^{U} _{\Phi} [\Psi] = \rho_* ^{-1} \Div (((A \nabla \Psi A) - \nabla\Psi )\rho_* ), \\
&\cE ^{V} _{\Phi} [\Psi] = (\na^2 V (id+\Phi )- \na^2 V )\cdot \Psi, \\
&\cE ^{W} _{\Phi} [\Psi] = \cK_{\Phi} [\Psi ] - \cK [\Psi ]= \int \delta _{\Phi}K(x,y) \cdot (\Psi(x)-\Psi(y))d\rho_*(y),
\end{align*}
and
\begin{align*}
\delta _{\Phi}K(x,y) = \na^2 W(x-y + \Phi(x)-\Phi(y)) - \na^2 W(x-y).
\end{align*}

In view of \eqref{eq-bddproj}, for given $\eps>0$, it suffices to find there exists $\sigma_0>0$ such that if $\Vert \Phi \Vert_{\mathring{W}^{2,p}}\le \sigma_0$, then $\Vert  \mathcal{E}^\cdot_{\Phi}(\Psi)\Vert_{L^{2}}\le \eps \Vert \Psi \Vert_{\mathring{W}^{2,2}}$. We show this for each of three $\mathcal{E}^{\cdot}$s.

\medskip

Recall for all $\eps'>0$, $\| A - I \| _{{W}^{1,p}} \le  \eps' $ if $\| \Phi \|_{{W}^{2,p}}\le \delta=\delta(\eps') $.  Also
\begin{align*}
\|A (\na\Psi) A - \na \Psi \|_{{W}^{1,2}} &\leq  C ( \|A(\na\Psi) (A-I) \|_{{W}^{1,2}} + \|(A-I)\na \Psi \|_{{W}^{1,2}}) \\
&\leq C \| A-I \|_{{W}^{1,p}} \| \na \Psi \|_{{W}^{1,2}} ( \|A \|_{{W}^{1,p}}+1).
\end{align*}
Here we used $\Vert fg \Vert_{W^{1,2}} \le C \Vert f \Vert _{W^{1,p}}\Vert g \Vert _{W^{1,2}}$ for $p>n$, which can immediately be shown using Morrey and Sobolev inequalities. This proves that for all $\eps$, there exists $\sigma_0 >0$ such that if $\Vert \Phi \Vert_{\mathring{W}^{2,p}}\le \sigma_0 $, then $\Vert \cE_{\Phi}^{U} [\Psi]\Vert_{{L}^2}\le \eps  $ for all $\Vert \Psi \Vert_{\mathring{W}^{2,2}} \le 1$. 

\medskip

Next, in view of the mean value theorem, a bound on $|\nabla^3 V|$ from \textbf{(\ref{assump_A1})} and Sobolev embedding, 
\[ \|\na ^2 V (x+\Phi ) - \na^2 V(x) \|_{L^\infty } \le C \Vert \Phi \Vert_{\mathring{W}^{2,p}}.\]
This shows the desired smallness of $\Vert  \cE_{\Phi}^{V}[\Psi]\Vert _{{L}^2}$ for all $\Vert \Psi\Vert_{\mathring{W}^{2,2}}\le 1$ if $\Vert \Phi \Vert_{\mathring{W}^{2,p}}$ is sufficiently small.

\medskip 

Finally, to estimate $\mathcal{E}^{W}$, it suffices to have a pointwise estimate on $\delta _{\Phi}K$, namely $|\delta _{\Phi}K(x,y)|\le C \Vert \Phi\Vert _{\mathring{W}^{2,p}} d(x,y)^{-n}$ for $\Vert \Phi\Vert_{\mathring{W}^{2,p}}\le \sigma_0$. Note that $\Psi \mapsto \int d(x,y)^{-n} (\Psi(x)-\Psi(y)) d\rho_*(y) $ is a bounded linear map from $\mathring{W}^{1,2}$ to $L^2$: see the proof of Lemma \ref{HLS}. Thus, 
\[\Vert  \cE_{\Phi}^{W}[\Psi] \Vert 
_{L^2} \le C \Vert \Phi \Vert_{\mathring{W}^{2,p}}\Vert \Psi \Vert_{\mathring{W}^{1,2}}.\] For the pointwise estimate on $\delta _{\Phi}K$, assume $\sigma_0 $ is small so that $|\nabla \Phi|\le 1/2$. This guarantees that a geodesic $\{\gamma(t)\}_{t\in[0,1]}$ between $x-y$ and $x-y+\Phi(x)-\Phi(y)$ in $\Tn$ satisfies $ d(0,\gamma(t))\ge \frac12 d(x,y)  $, for all $t\in[0,1]$. Moreover, observe
\begin{align} \label{eq-KM3}
\delta _{\Phi}K(x,y) = \int_0^1 \na^3 W( \gamma(t) ) \cdot \dot \gamma(t) dt,
\end{align}
and $\int_0^1|\dot\gamma(t)|dt\le |\Phi(x)-\Phi(y)|\le C d(x,y) \Vert \Phi \Vert _{\mathring{W}^{2,p}}$.  By \textbf{(\ref{assump_A3})} and $d(0,\gamma(t))\ge d(x,y)/2$, $|\nabla^3W(\gamma(t))|\le Cd(x,y)^{-1-n}$ and this implies the desired estimate on $\delta _{\Phi}K$. This finishes the proof of the claim and the lemma.
\end{proof}
\end{lemma}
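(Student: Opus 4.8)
The plan is to reduce the desired Lipschitz bound to a uniform operator-norm estimate on the family of linearizations $L_\Phi := d\cM(\Phi)$, regarded as bounded maps $\mathring{W}^{2,2}\to\mathring{L}^2$ (cf.\ Lemma~\ref{Hessian}), with $L_0=L$. Since $\cM$ is $C^1$ --- indeed analytic --- on a neighborhood of $0$ by Theorem~\ref{thm-analyticity2}, for $u,v$ in a small ball $B_{\sigma_0}(0)\subset\mathring{W}^{2,p}$ the fundamental theorem of calculus along the segment $t\mapsto v+t(u-v)$ gives, in $\mathring{L}^p\subset\mathring{L}^2$,
\[
(\cM u-Lu)-(\cM v-Lv)=\int_0^1\big(L_{v+t(u-v)}-L\big)[u-v]\,dt,
\]
so it is enough to prove that $\sup_{\Vert\Phi\Vert_{\mathring{W}^{2,p}}\le\sigma_0}\Vert L_\Phi-L\Vert_{\mathscr{L}(\mathring{W}^{2,2},\mathring{L}^2)}\to0$ as $\sigma_0\to0$. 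By \eqref{eq-dMV}, \eqref{eq-dMU}, \eqref{eq-dMW} together with the boundedness of $P_{\mu_0}$ on $L^2_{\mu_0}$ from \eqref{eq-bddproj}, this in turn reduces to estimating, by $o(1)\Vert\Psi\Vert_{\mathring{W}^{2,2}}$, the three unprojected error terms $\cE^U_\Phi[\Psi]$, $\cE^V_\Phi[\Psi]$, $\cE^W_\Phi[\Psi]$ arising from $\widetilde{\cU}$, $\widetilde{\cV}$, $\widetilde{\cW}$ respectively.

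The confinement and internal contributions are routine perturbation estimates. For $\widetilde{\cV}$ one has $\cE^V_\Phi[\Psi]=(\nabla^2 V(id+\Phi)-\nabla^2 V)\cdot\Psi$, and the mean value theorem, the bound on $\Vert\nabla^3 V\Vert_{L^\infty}$ from \textbf{(\ref{assump_A1})}, and the Sobolev embedding $\mathring{W}^{2,p}\hookrightarrow L^\infty$ (valid since $p>n$) give $\Vert\nabla^2 V(id+\Phi)-\nabla^2 V\Vert_{L^\infty}\le C\Vert\Phi\Vert_{\mathring{W}^{2,p}}$. For $\widetilde{\cU}$, setting $A:=(I+\nabla\Phi)^{-1}$, one has $\cE^U_\Phi[\Psi]=\rho_0^{-1}\Div\big((A\nabla\Psi A-\nabla\Psi)\rho_0\big)$; I would decompose $A\nabla\Psi A-\nabla\Psi=A\nabla\Psi(A-I)+(A-I)\nabla\Psi$ and combine the product inequality $\Vert fg\Vert_{W^{1,2}}\le C\Vert f\Vert_{W^{1,p}}\Vert g\Vert_{W^{1,2}}$ ($p>n$, from Morrey and Sobolev) with $\Vert A-I\Vert_{W^{1,p}}\to0$ as $\Vert\Phi\Vert_{\mathring{W}^{2,p}}\to0$ (positive-definiteness of $I+\nabla\Phi$ and the Banach-algebra property of $W^{1,p}$, exactly as in the proof of Theorem~\ref{thm-analyticity2}), which yields the bound by $o(1)\Vert\Psi\Vert_{\mathring{W}^{2,2}}$.

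The interaction error $\cE^W_\Phi[\Psi]=\int\delta_\Phi K(\cdot,y)\cdot(\Psi(\cdot)-\Psi(y))\,d\rho_0(y)$, with $\delta_\Phi K(x,y)=\nabla^2 W(x-y+\Phi(x)-\Phi(y))-\nabla^2 W(x-y)$, is the main point and the expected obstacle, because of the Coulomb singularity of $W$ at the origin. The plan is to prove the pointwise bound $|\delta_\Phi K(x,y)|\le C\Vert\Phi\Vert_{\mathring{W}^{2,p}}\,d(x,y)^{-n}$ and then conclude from the fact --- which is precisely Lemma~\ref{HLS} with $\gamma=2$ --- that $\Psi\mapsto\int d(\cdot,y)^{-n}(\Psi(\cdot)-\Psi(y))\,d\rho_0(y)$ is bounded $\mathring{W}^{1,2}\to L^2$, so that $\Vert\cE^W_\Phi[\Psi]\Vert_{L^2}\le C\Vert\Phi\Vert_{\mathring{W}^{2,p}}\Vert\Psi\Vert_{\mathring{W}^{1,2}}$. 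For the pointwise bound I would first shrink $\sigma_0$ so that $|\nabla\Phi|\le\tfrac12$ on $\Tn$ (Morrey embedding); then a minimizing geodesic $\gamma:[0,1]\to\Tn$ joining $x-y$ to $x-y+\Phi(x)-\Phi(y)$ stays in the region $d(0,\gamma(t))\ge\tfrac12 d(x,y)$, hence avoids the singularity of $W$, and writing $\delta_\Phi K(x,y)=\int_0^1\nabla^3 W(\gamma(t))\cdot\dot\gamma(t)\,dt$, the Coulomb bound $|\nabla^3 W(\gamma(t))|\le C_W^4\,3!\,d(0,\gamma(t))^{-(n+1)}\le C\,d(x,y)^{-(n+1)}$ from \textbf{(\ref{assump_A3})} together with $\int_0^1|\dot\gamma(t)|\,dt\le|\Phi(x)-\Phi(y)|\le\Vert\nabla\Phi\Vert_{L^\infty}d(x,y)\le C\Vert\Phi\Vert_{\mathring{W}^{2,p}}d(x,y)$ gives the claim; differentiation under the integral defining $\cE^W_\Phi$ (and, earlier, the explicit form \eqref{eq-dMW} of $L_\Phi$) is justified by dominated convergence exactly as in the proofs of Lemma~\ref{lemma-1stvariation} and Theorem~\ref{thm-analyticity2}, using the integrability of $d(\cdot,y)^{1-n}$ on $\Tn$. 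Combining the three estimates yields $\sup_{\Vert\Phi\Vert_{\mathring{W}^{2,p}}\le\sigma_0}\Vert L_\Phi-L\Vert_{\mathscr{L}(\mathring{W}^{2,2},\mathring{L}^2)}\le C\sigma_0$, and integrating along the segment closes the argument once $\sigma_0=\sigma_0(\varepsilon,n,p,\rho_0,C_V,C_W)$ is chosen with $C\sigma_0\le\varepsilon$. The hard part, as flagged, is keeping the perturbed geodesic bounded away from the origin: that is precisely what lets the singular bound on $\nabla^3 W$ be applied with denominator comparable to $d(x,y)^{n+1}$ rather than something worse, so that the factor $|\Phi(x)-\Phi(y)|\lesssim d(x,y)\Vert\Phi\Vert_{\mathring{W}^{2,p}}$ absorbs one power of the distance and leaves the singularity $d(x,y)^{-n}$ to which Lemma~\ref{HLS} applies.
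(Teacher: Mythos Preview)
Your proposal is correct and follows essentially the same approach as the paper: the same reduction via the fundamental theorem of calculus to a uniform operator-norm bound on $L_\Phi-L$, the same three-way splitting into $\cE^U_\Phi$, $\cE^V_\Phi$, $\cE^W_\Phi$, and the same estimates for each (the $W^{1,p}\times W^{1,2}\to W^{1,2}$ product rule for the internal part, the mean-value/$\nabla^3 V$ bound for the confinement part, and the geodesic argument giving $|\delta_\Phi K|\le C\Vert\Phi\Vert_{\mathring{W}^{2,p}}d(x,y)^{-n}$ combined with the Lemma~\ref{HLS} mechanism for the interaction part). One small imprecision: the boundedness of $\Psi\mapsto\int d(\cdot,y)^{-n}(\Psi(\cdot)-\Psi(y))\,d\rho_0(y)$ from $\mathring{W}^{1,2}$ to $L^2$ is not literally the statement of Lemma~\ref{HLS} (which is about $\cK$), but its \emph{proof} applies verbatim with the kernel $d(x,y)^{-n}$ in place of $\nabla^2 W(x-y)$, which is also how the paper phrases it.
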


\begin{lemma} By choosing $W'_1(0)=B_{\sigma_0}(0)\subset \mathring{W}^{2,p}$ and $W'_2(0)=B_{\sigma_0'}(0)\subset \mathring{L}^p$ sufficiently small, there hold  
\begin{align}\label{LS_key_2}
\Vert \cM(u)- \cM(v) \Vert_{\mathring{L}^2}& \leq C \Vert u-v \Vert_{\mathring{W}^{2,2}} \quad &&\forall u,v \in W'_1(0),\\ 
\| \Xi (f)- \Xi (g) \|_{\mathring{W}^{2,2}}& \le C  \|f-g \|_{\mathring{L}^2} \quad &&\forall f,g \in W'_2 (0). \label{LS_key_3}
\end{align}
\begin{proof}
Let $u=\Xi(f)$ and $v= \Xi(g)$. i.e.,
\begin{align*}
\cN u =\Pi u + Lu + (\cM - L)u =f, \\
\cN v = \Pi v + Lv + (\cM - L)v = g.
\end{align*}
Note that $(L+\Pi ) (u-v) = (f-g) - ((\cM - L)u-(\cM -L)v)$. Thus by taking $C_1 :=  \|(L+ \Pi )^{-1} \|_{\cL (\mathring{L}^2 , \mathring{W}^{2,2})} <\infty $ (by Lemma \ref{lemma-isomo}), 
\begin{align*}
\|u - v \|_{\mathring{W}^{2,2}} &\le C_1  ( \| f-g \|_{\mathring{L}^{2}} + \| (\cM - L)u - (\cM - L)v) \|_{\mathring{L}^2}). 
\end{align*}
By Lemma \ref{lemma-M-L}, we obtain \eqref{LS_key_3} for $C=2C_1$ for small $\sigma_0'>0$. The inequality \eqref{LS_key_2} is a direct consequence of Lemma \ref{lemma-M-L} and the fact that $L:\mathring{W}^{2,2} \to \mathring{L}^2$ is bounded as observed in Lemma \ref{Hessian}.

\end{proof}
\end{lemma}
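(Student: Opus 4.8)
The plan is to transport the data produced by the analytic inverse function theorem, which is set up on the scale $\mathring{W}^{2,p}/\mathring{L}^p$, down to the Hilbert scale $\mathring{W}^{2,2}/\mathring{L}^2$. The two facts that make this possible are already available: near $0$ the nonlinear gradient map $\cM$ differs from its linearization $L$ by a map that is Lipschitz from $\mathring{W}^{2,2}$ to $\mathring{L}^2$ with an \emph{arbitrarily small} constant (Lemma~\ref{lemma-M-L}), and the augmented linearization $\mathscr{L}=\Pi+L$ is a Banach space isomorphism on every scale (Lemma~\ref{lemma-isomo}), so that $\mathscr{L}^{-1}:\mathring{L}^2\to\mathring{W}^{2,2}$ is bounded. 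I would fix constants in a definite order: first a small $\varepsilon>0$, then the $\mathring{W}^{2,p}$-radius $\sigma_0=\sigma_0(\varepsilon)$ coming from Lemma~\ref{lemma-M-L}, and only afterwards the $\mathring{L}^p$-radius $\sigma_0'$.

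For \eqref{LS_key_2} I would split $\cM=L+(\cM-L)$. By Lemma~\ref{Hessian}, $L$ is bounded from $\mathring{W}^{2,2}$ to $\mathring{L}^2$; denote its norm by $C_L$. For $u,v\in B_{\sigma_0}(0)\subset\mathring{W}^{2,p}\hookrightarrow\mathring{W}^{2,2}$, Lemma~\ref{lemma-M-L} gives $\|(\cM-L)u-(\cM-L)v\|_{\mathring{L}^2}\le\varepsilon\|u-v\|_{\mathring{W}^{2,2}}$, and the triangle inequality yields \eqref{LS_key_2} with $C=C_L+\varepsilon$.

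For \eqref{LS_key_3} put $u=\Xi(f)$ and $v=\Xi(g)$, so that $\cN u=f$, $\cN v=g$. Since $\cN=\Pi+\cM=\mathscr{L}+(\cM-L)$, subtraction gives
\[
\mathscr{L}(u-v)=(f-g)-\big[(\cM-L)u-(\cM-L)v\big].
\]
Applying $\mathscr{L}^{-1}$, bounded by Lemma~\ref{lemma-isomo} with norm $C_1$, and then Lemma~\ref{lemma-M-L} with the choice $\varepsilon=\tfrac{1}{2C_1}$, one gets
\[
\|u-v\|_{\mathring{W}^{2,2}}\le C_1\|f-g\|_{\mathring{L}^2}+C_1\varepsilon\|u-v\|_{\mathring{W}^{2,2}}=C_1\|f-g\|_{\mathring{L}^2}+\tfrac12\|u-v\|_{\mathring{W}^{2,2}},
\]
and absorbing the last term gives $\|u-v\|_{\mathring{W}^{2,2}}\le 2C_1\|f-g\|_{\mathring{L}^2}$, i.e.\ \eqref{LS_key_3} with $C=2C_1$. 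The point that must be checked is that Lemma~\ref{lemma-M-L} is genuinely applicable to $u$ and $v$, i.e.\ that $u,v\in B_{\sigma_0}(0)$: with $\sigma_0$ now fixed, one shrinks $W_2'(0)=B_{\sigma_0'}(0)$ so that $\Xi(B_{\sigma_0'}(0))\subset B_{\sigma_0}(0)=W_1'(0)$, which is possible since $\Xi$ is Lipschitz near $0$ and $\Xi(0)=0$ (because $\cM(0)=0$ when $\rho_0$ is a stationary solution, hence $\cN(0)=0$). To make one $\sigma_0$ serve both statements simultaneously, take $\varepsilon=\min\{1,\tfrac{1}{2C_1}\}$ and $C=\max\{C_L+1,\,2C_1\}$.

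I do not expect a serious obstacle at this stage. The genuinely technical ingredient --- the uniform smallness of $d\cM(\Phi)-L$ as operators $\mathring{W}^{2,2}\to\mathring{L}^2$, which rests on the Banach-algebra and Calder\'on--Zygmund / weakly singular integral estimates of Sections~\ref{sec-analyticity}--\ref{sec-fredholm} --- is already packaged inside Lemma~\ref{lemma-M-L}, and the inversion of $\mathscr{L}$ is Lemma~\ref{lemma-isomo}. The only thing requiring genuine attention is the bookkeeping of neighborhoods: the order in which $\varepsilon$, $\sigma_0$, and $\sigma_0'$ are chosen, so that all smallness hypotheses hold at once and the absorption step is legitimate. (If one prefers not to invoke stationarity of $\rho_0$ to obtain $\Xi(0)=0$, it suffices to re-center $W_1'(0)$ and $W_2'(0)$ at $\Xi(0)$ and $\cN(0)$ respectively; nothing else changes.)
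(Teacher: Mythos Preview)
Your proposal is correct and follows essentially the same approach as the paper: split $\cM=L+(\cM-L)$ and combine the boundedness of $L$ (Lemma~\ref{Hessian}) with the smallness of $\cM-L$ (Lemma~\ref{lemma-M-L}) for \eqref{LS_key_2}, and for \eqref{LS_key_3} subtract the equations $\cN u=f$, $\cN v=g$, invert $\mathscr{L}=\Pi+L$ via Lemma~\ref{lemma-isomo}, and absorb the small remainder. Your version is in fact slightly more explicit than the paper's about the order in which $\varepsilon$, $\sigma_0$, and $\sigma_0'$ must be chosen and about why $u,v\in B_{\sigma_0}(0)$.
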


\begin{proof} [Proof of Theorem \ref{thm-LS-G}] Recall $\{\varphi_i\}$ are $\mathcal{H}_{\mu_*}$ orthonormal basis of $\ker L$ and we defied the projection $\Pi$ by \eqref{eq-def-kerproj}. Let us define reduced energy $\Gamma$ as the pull-back of $G$ through $\Xi$: i.e., $\Gamma:\mathbb{R}^m \to \mathbb{R} $ with 
$\Gamma(\xi):= G\Big (\Xi \Big (\sum_{i=1}^m \xi^i \varphi_i \Big )\Big ) $. Note 
\begin{equation}\label{eq-nablaGamma}
    \frac{\partial}{\partial{\xi_j}}\Gamma(\xi)= \langle \mathcal{M}(\Xi(\xi^i\varphi_i)),d\Xi (\xi^i \varphi_i)[\varphi_j]\rangle_{H_{\mu_*}} 
\end{equation}
and this, in particular, implies $\Gamma$ is an analytic function on a neighborhood of $0\in \mathbb{R}^m$ as $\mathcal{M}$ and $\Xi$ are analytic near $0$.

Suppose $\Pi u = \sum_i \xi^i\varphi_i $ and $u\in B_{\sigma_0}(0)\subset W'_1(0)$, $\sigma_0\in (0,\sigma)$. Then   
\begin{equation}\label{LS_final_1} \begin{aligned} |\nabla \Gamma(\xi)|&\underset{(\ref{eq-nablaGamma})}{\le}   C \Vert \mathcal{M}(\Xi(\xi^i \varphi_i))\Vert_{\mathring{L}^2}= C\Vert \mathcal{M}(\Xi(\Pi u ))\Vert_{\mathring{L}^2}\\
&\underset{(\ref{LS_key_2})}{\le}  C\| \cM (u) \|_{\mathring{L}^2}+C\| (\Xi (\Pi u ))  - u\|_{\mathring{W}^{2,2}}  \\
&\underset{(\ref{LS_key_3})}{\le}C   \|\cM(u)\|_{\mathring{L}^2}.
\end{aligned}\end{equation}
Here $\sigma_0$ is chosen so that $\Xi(\Pi u) \in W_1'(0)$ and $\Pi u$, $\Pi u +\mathcal{M}u \in W'_2(0)$.

By the classical \L ojasiewicz inequality for analytic function on finite Euclidean space \cite{Loj,lojasiewicz1965ensembles,Loj3}, on a neighborhood of $0\in \mathbb{R}^m$, 
\begin{equation} \label{eq-LSGamma}
      |\Gamma(\xi)-\Gamma(0)|^{\theta} \le  C |\nabla \Gamma(\xi)|.
\end{equation}
By choosing $\sigma_0$ small, we may assume \eqref{eq-LSGamma} holds for $\xi\in \mathbb{R}^m $ with  $\Pi u= \sum \xi^i \varphi_i$. 

Recalling $\Gamma(\xi)=G(\Xi (\Pi u))$, $\Gamma(0)=G(0)$, and the concavity of $x\mapsto x^{\theta}$,  
\begin{equation} \label{eq-166}\begin{aligned}|\Gamma(\xi)-\Gamma(0)|^{\theta}& \ge \big (|G(u)-G(0)|^{\theta} - |G(u)-G(\Xi (\Pi u))|^\theta \big )  \end{aligned}\end{equation}
We claim that 
\begin{equation}\label{eq-Gclaim} |G(u)-G(\Xi(\Pi u))| \le C \Vert \mathcal{M}u\Vert _{\mathring{L}^2}^2 .  \end{equation} In view of \eqref{LS_final_1}--\eqref{eq-Gclaim} with $\theta \ge 1/2$, there exists $C<\infty$ such that  
\[  |G(u)-G(0)|^{\theta} \le C  \Vert \mathcal{M} u\Vert_{\mathring{L}^{2}}  , \]
for all $u \in  B_{\sigma_0}(0) \subset \mathring{W}^{2,p}$, which is desired inequality \eqref{eq-GLSintro}.

It remains to show the claim \eqref{eq-Gclaim}. This follows by combining
\begin{align*}
   G(\Xi (\Pi u))-G(u) 
   &=  \int _0 ^1 \Big < \cM \big(u + t (\Xi (\Pi u) - u)\big ) , \Xi (\Pi u) - u \Big >_{\cH} dt ,
\end{align*}
with two estimates 
\begin{align}
\Vert \cM \big(u + t (\Xi (\Pi u) - u)\big )\Vert_{\mathring{L}^2} \underset{(\ref{LS_key_2})}{\le} \Vert\cM  u \Vert_{\mathring{L}^2} + C \Vert\Xi (\Pi u) - u \Vert_{\mathring{W}^{2,2}} , \nn      
\end{align} and 
\begin{align}
   \Vert\Xi (\Pi u) - u  \Vert_{\mathring{L}^{2}}\le   \Vert\Xi (\Pi u) - u  \Vert_{\mathring{W}^{2,2}} \underset{(\ref{LS_key_3})}{\le} C \Vert \mathcal{M}u \Vert_{\mathring{L^2}}  .\nn 
\end{align}

\end{proof}
We note the analyticity assumption is not needed in the \L ojasiewicz inequality if one a priori knows $\rho_*$ is non-degenerate.  
\begin{corollary} [non-degenerate critical point] \label{cor-LSnondegenerate}Instead of \textup{\textbf{(\ref{assump_A1})}--\textbf{(\ref{assump_A3})}}, suppose $V\in C^3$, $W$ is symmetric $W(z)=W(-z)$ and $|\nabla^k W(z)|\le C|z|^{2-n-k}$ for $z\in [-1/2,1/2]^n$, $k=1$, $2$, and $3$. For a given positive probability density $\rho_*\in W^{1,p}$ some $p>n$, if $\ker L =\{0\}$, then Theorem \ref{thm-LS-G} holds with $\theta=1/2$. 
\begin{proof} In the proof of Theorem \ref{thm-LS-G}, the analyticity \textbf{(\ref{assump_A1})} and \textbf{(\ref{assump_A3})} are used to derive $\Gamma(\xi)$ is analytic and therefore \L ojasiewicz inequaltiy \eqref{eq-LSGamma} holds. If the kernel is trivial \eqref{eq-LSGamma} is not needed as $\Xi:\ker L=\{0\}\to \mathbb{R}$ becomes a trivial map. In the previous proof, \eqref{eq-Gclaim} directly implies the desired result. The $C^3$ regularity assumptions on $V$ and $W$ are required to follow the rest of argument in Section \ref{sec-fredholm} and \ref{sec-LS}.
\end{proof}
\end{corollary}

\section{Proofs of main theorems}  \label{sec-proofmainthm}
\subsection{Proof of Theorem \ref{thm-main-LS}}
We prove \L ojasiewicz inequality for $\cF$.

\begin{lemma}\label{lemma-IFT2} Let $\mu_*=\rho_*dx\in \cP^{ac}(\Tn )$ satisfy 
\begin{enumerate}
    \item $0 < C^{-1} < \rho_*  < C $ for some $C<\infty$, and
    \item $\rho_* \in W^{2,p}$ for some $p>n$.
\end{enumerate} 
Then there exist $\varepsilon>0$ and $C'<\infty$ depending on $n$, $p$, $C$, and $\Vert \rho_*\Vert_{W^{2,p}}$ such that if $\mu_1=\rho_1dx \in \cP^{ac}(\Tn)$ satisfies $\Vert \rho_*-\rho_1\Vert_{W^{1,p}}\le \varepsilon$, then the gradient vector field $\Phi $ induced by the optimal transport map from $\mu_*$ to $\mu_1$ satisfies 
\[
\Vert \Phi \Vert _{\mathring{W}^{2,p}} \le C \Vert \rho_*-\rho_1 \Vert _{W^{1,p}}. 
\]

\begin{proof}

Consider the map $J: \mathring{W}^{2,p} \to W^{1,p}\cap \{\rho\,:\, \int_{\Tn} \rho dx=1\} =:M$ defined by 
\[ J(\Phi)(x)= \rho_*(x+\Phi(x)) \det (I+\nabla \Phi(x)). \]
Note $J(\Phi)\in W^{1,p}$ since $W^{1,p}$ is a Banach algebra for $p>n$. Moreover using $\rho_*\in W^{2,p}$, we also obtain that $J$ is a $C^1$-map.

Next, observe that 
\[dJ(0)[\Phi]= \nabla \cdot ( \rho_*\Phi).\]
We claim that $dJ(0)$ is an isomorphism between $\mathring{W}^{2,p}$ and $T_{\rho_*}M=W^{1,p}\cap \{h \,:\, \int_{\Tn} h dx =0\}$. Recall that $W^{3,p}\cap \{\int_\Tn \phi dx =0\}$ is isomorphic to $\mathring{W}^{2,p}$ by the gradient $\phi \mapsto \nabla \phi $. If $\Phi =\nabla \phi $, $dJ({0})[\nabla \phi] = \nabla \cdot (\rho_* \nabla \phi )$. As an elliptic operator from $W^{3,p}$ to $W^{1,p}$, the operator $L\phi:=\nabla \cdot (\rho_* \nabla \phi )$ has $\ker L=\mathrm{span}\{1\}$, which follows from the strong maximum principle. By the Fredholm theory together with elliptic regularity (Calderon-Zygmund), $\mathrm{Im\,}L= W^{1,p}\cap \{ \int h\cdot 1 \, dx =0\}= T_{\rho_*}M$. This shows $dJ(0):\mathring{W}^{2,p}\to T_{\rho_*}M$ is a bounded bijective linear map (which has bounded inverse by the open mapping theorem.) By the inverse function theorem\footnote{Since $M$ is an affine space $M=1+ \{h\in W^{1,p}\,:\, \int_{\Tn} h dx =0\}$, one does not need to use a version of inverse function theorem whose target is a submanifold of $W^{1,p}$.}, we obtain that $J$ is a $C^1$-diffeomorphism from a neighborhood of $0\in \mathring{W}^{2,p}$ onto a neighborhood of $J(0)\in \rho_* \in M$. Now for $\rho_1\in M$ sufficiently close to $0$, $J^{-1}(\rho_1)=  \tilde \Phi $ satisfies 
\[ \Vert \tilde \Phi -0 \Vert _{\mathring{W}^{2,p}} \le C\Vert \rho_1-\rho_*\Vert _{W^{1,p}} .\]

Finally observe that if $\rho_1$ is close to $\rho_*$ and hence $I+\nabla \tilde \Phi\ge \frac 12 I $, then $x\mapsto x+\tilde \Phi(x) $ is the optimal transport map from $\rho_1 dx$ to $\rho_* dx$. It is direct by calculation that $(id+\tilde \Phi)_\# \rho_1 dx = \rho_* dx $ and the fact that the gradient of a $\frac12 d^2$-convex function which pushes $\mu_1$ to $\mu_*$ is unique, and moreover, it is optimal by \cite{brenier1991polar,mccann2001polar}. The optimal transport from $\rho_*dx $ to $\rho_1 dx $ is $(id +  \tilde \Phi)^{-1}$ and it is $id+\Phi$ for some $\Phi \in W^{2,p}$. One may directly check $\Vert \Phi\Vert_{W^{2,p}} \le C \Vert \tilde \Phi \Vert _{W^{2,p}}$ and this finishes the proof.

\end{proof}\end{lemma}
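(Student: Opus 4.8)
The plan is to phrase this as a Banach-space inverse function theorem for the pushforward-density map. Set $M:=W^{1,p}(\Tn)\cap\{\rho:\int_{\Tn}\rho\,dx=1\}$ and define $J:\mathring{W}^{2,p}\to M$ by
\[
J(\Phi)(x):=\rho_0\bigl(x+\Phi(x)\bigr)\,\Det\bigl(I_n+\nabla\Phi(x)\bigr),
\]
which is exactly the density for which $\mathrm{id}+\Phi$ pushes $J(\Phi)\,dx$ forward onto $\rho_0\,dx$. I would first verify that $J$ is a well-defined $C^1$ map on a neighborhood of $0$: since $p>n$, $W^{1,p}(\Tn)$ is a Banach algebra continuously embedded in $C^0$, so $\Det(I_n+\nabla\Phi)$ depends polynomially (hence smoothly) on $\nabla\Phi\in W^{1,p}$, while $\Phi\mapsto\rho_0\circ(\mathrm{id}+\Phi)$ is $C^1$ into $W^{1,p}$ precisely because $\rho_0\in W^{2,p}$ forces $\nabla\rho_0\in W^{1,p}\hookrightarrow C^0$, yielding continuity of the composed derivative $\nabla\rho_0\circ(\mathrm{id}+\Phi)$. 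A direct differentiation at $\Phi=0$ gives $dJ(0)[\Phi]=\Div(\rho_0\Phi)$.

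The core step is to show that $dJ(0)\colon\mathring{W}^{2,p}\to T_{\rho_0}M=W^{1,p}\cap\{h:\int_{\Tn}h\,dx=0\}$ is a Banach-space isomorphism, with the norm of $dJ(0)^{-1}$ controlled by $n,p,C,\|\rho_0\|_{W^{2,p}}$. Writing $\Phi=\nabla\phi$ with $\phi\in W^{3,p}$ and $\int_{\Tn}\phi\,dx=0$ — legitimate, since $\phi\mapsto\nabla\phi$ identifies this space with $\mathring{W}^{2,p}$ by the Poincaré inequality — the operator becomes the uniformly elliptic map $\phi\mapsto\Div(\rho_0\nabla\phi)$. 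Its kernel is the constants by the strong maximum principle (using $C^{-1}\le\rho_0\le C$), its image lies in $\{h:\int h=0\}$ by the divergence theorem on $\Tn$ and equals it by Fredholm theory, and the quantitative estimate $\|\phi\|_{W^{3,p}}\le C\bigl(\|\Div(\rho_0\nabla\phi)\|_{W^{1,p}}+\|\phi\|_{L^p}\bigr)$ follows from the Calderón–Zygmund bound for $\Delta$ on $\Tn$ exactly as in Lemma~\ref{lemma-CZ}, after which Poincaré on $\{\int\phi=0\}$ absorbs the lower-order term. The Banach-space inverse function theorem then makes $J$ a $C^1$-diffeomorphism from a neighborhood of $0$ onto a neighborhood of $\rho_0$ in $M$, with locally Lipschitz inverse of constant $\sim\|dJ(0)^{-1}\|$; hence for $\|\rho_1-\rho_0\|_{W^{1,p}}\le\varepsilon$ the field $\tilde\Phi:=J^{-1}(\rho_1)$ satisfies $\|\tilde\Phi\|_{\mathring{W}^{2,p}}\le C'\|\rho_1-\rho_0\|_{W^{1,p}}$. (Since $M$ is affine one applies the theorem to $\Phi\mapsto J(\Phi)-1$, valued in the linear space $T_{\rho_0}M$.)

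It remains to identify $\mathrm{id}+\tilde\Phi$, and then its inverse, with honest optimal maps. Because $\|\tilde\Phi\|_{\mathring{W}^{2,p}}$ is small, Morrey's embedding gives $\|\nabla\tilde\Phi\|_{L^\infty}<\tfrac12$, so $I_n+\nabla\tilde\Phi\ge\tfrac12 I_n$ and $\mathrm{id}+\tilde\Phi=\nabla\bigl(\tfrac12|x|^2+\tilde\phi\bigr)$ is the gradient of a $\tfrac12 d^2$-convex function; by Brenier's theorem \cite{brenier1991polar} and McCann's extension to compact manifolds \cite{mccann2001polar}, together with uniqueness of such potentials, $\mathrm{id}+\tilde\Phi$ is the optimal transport map from $\mu_1$ to $\mu_0$. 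The optimal map from $\mu_0$ to $\mu_1$ is then its inverse, and since $\mathrm{id}+\tilde\Phi$ is a near-identity $W^{2,p}$-diffeomorphism, solving $\Phi=-\tilde\Phi\circ(\mathrm{id}+\Phi)$ by a contraction argument in $\mathring{W}^{2,p}$ expresses this inverse as $\mathrm{id}+\Phi$ with $\Phi\in\mathring{W}^{2,p}$ and $\|\Phi\|_{W^{2,p}}\le C\|\tilde\Phi\|_{W^{2,p}}$; chaining the estimates yields the claim.

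The step I expect to require the most care is the quantitative isomorphism in the second paragraph: not the bijectivity of $dJ(0)$, which is classical elliptic theory, but the explicit dependence of $\|dJ(0)^{-1}\|$ on $\|\rho_0\|_{W^{2,p}}$ and on the ellipticity constant $C^{-1}$, which must be propagated through the Calderón–Zygmund estimate and the Poincaré absorption. A secondary subtlety is making the torus version of Brenier/McCann precise — that a near-identity gradient map is automatically $d^2$-optimal and that its inverse again lies in $W^{2,p}$.
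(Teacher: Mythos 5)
Your proposal is correct and follows essentially the same route as the paper: the same map $J(\Phi)=\rho_0(\mathrm{id}+\Phi)\det(I+\nabla\Phi)$, the same identification $dJ(0)[\Phi]=\nabla\cdot(\rho_0\Phi)$ shown to be an isomorphism by the strong maximum principle, Fredholm theory and Calder\'on--Zygmund estimates, the inverse function theorem on the affine space $M$, and the same Brenier--McCann identification of $\mathrm{id}+\tilde\Phi$ followed by inversion to obtain $\Phi$. The extra care you flag (quantitative dependence of $\|dJ(0)^{-1}\|$ and the contraction argument for the inverse map) is exactly where the paper is terse, and your treatment fills those points in consistently.
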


\begin{proof}[Proof of Theorem \ref{thm-main-LS}] Suppose $\Phi$ is the gradient vector field induced by the optimal transport from $\mu_*=\rho_*dx$ to $\mu=\rho dx$. In particular $(\exp \Phi )_\# \mu_*=\mu $. Observe $|G(\Phi)-G(0)|= |\mathcal{F}(\mu )-\mathcal{F}(\mu_*)|$ and the stationary solution $\rho_*$ is smooth and positive (see Remark \ref{remark-regularityrho} and Lemma \ref{lemma-regularityrho}.)

In view of Lemma~\ref{lemma-IFT2} and Theorem~\ref{thm-LS-G}, it suffices to prove  
\begin{equation}\label{eq-gradientcomparison}
\Vert \mathcal{M}(\Phi)\Vert_{\mathcal{H}_{\mu_* }}
   \le \Vert \nabla (\log \rho +V+W* \rho)\Vert_{\mathcal{H}_{\mu} }.
\end{equation}
On the flat torus \(\Tn = \mathbb{R}^n / \mathbb{Z}^n\), the exponential map is given by  
\[
\exp_{[x]}(\Phi) = [x+\Phi],
\]
where \(x\in \mathbb{R}^n\), \([x]=x+\mathbb{Z}^n\in \mathbb{R}^n/\mathbb{Z}^n\), and we identify \(T_{[x]}\Tn\) with \(\mathbb{R}^n\).  
For simplicity, we will write this below as \(\exp_x(\Phi)=x+\Phi\).

For a given gradient vector field \(\Psi=\nabla \psi\in \mathring{W}^{2,p}\), observe that  
\[G(\Phi + \varepsilon \Psi) 
  = \mathcal{F}\big( \big( (id+\varepsilon \Psi\circ (id+\Phi)^{-1})
     \circ (id+\Phi) \big)_\# \mu_* \big),\]
and also 
\[\big( (id+\varepsilon \Psi\circ (id+\Phi)^{-1})\circ (id+\Phi) \big)_\# \mu_*
   = (id+\varepsilon \Psi\circ (id+\Phi)^{-1})_\# \mu.\]
Differentiating at \(\varepsilon=0\) using Lemma \ref{lemma-1stvariation} and Remark \ref{eq-W2grad},  
\[\langle \mathcal{M}(\Phi), \Psi \rangle_{\mathcal{H}_{\mu_*}} 
  = \langle  \nabla (\log \rho +V+W* \rho), \,proj_{\mathcal{H}_\mu}
     \Psi\circ (id+\Phi)^{-1}  
    \rangle_{\mathcal{H}_\mu}.\]
Since 
\[\big\| proj_{\mathcal{H}_\mu}(\Psi\circ (id+\Phi)^{-1}) \big\|_{\mathcal{H}_\mu} 
  \le \big\| \Psi\circ (id+\Phi)^{-1}\big\|_{L^2_\mu} 
  = \|\Psi\|_{\mathcal{H}_{\mu_*}},\] we approximate $\mathcal{M}(\Phi)$ by $\Psi$ in $\mathring{L}^2$ and this implies \eqref{eq-gradientcomparison}.
\end{proof}

\subsection{Proof of Theorem \ref{thm-mainconvergence}} \label{subsec-convergence}
We need a series of lemmata. In the first two lemmata, we address regularity improvement of solutions to \eqref{MV}.

\begin{lemma}[smoothing estimate] \label{lemma-regularityrho}  For $p>\max(\frac{n}{2},1) $, suppose that  $\rho \in L^{\infty}([0,1],  L^p(\Tn ))$ is a solution to \eqref{MV} (in distribution sense) with Assumptions~\textup{\textbf{(\ref{assump_A1})}--\textbf{(\ref{assump_A3})}}. For all $\delta>0$ and $k\in \mathbb{N}_0$, there exists $C=C( \Vert \rho \Vert_{L^\infty_t L^p_x},\delta, k,p,n,V,W) $ such that
\begin{equation}\label{eq-smoothinglem1} \sup_{t\in [\delta,1] }\Vert \rho(t)\Vert _{W^{k,p}(\Tn)}\le C<\infty . \end{equation}
Consequently, differentiating \eqref{MV} in time $t$, the estimate above yields
\begin{equation}\label{eq-smoothinglem2} \Vert \rho \Vert _{W^{k,p}( \Tn\times [\delta ,1])} \le C <\infty.\end{equation}

\end{lemma}
\begin{proof}
The proof follows by Calderon-Zygmund estimate ($L^p$ theory) and a standard bootstrapping argument. The condition $p>n/2$ is  required to have improved integrability of solution after one iteration (and hence later on). Throughout the proof we use the following version of parabolic Calderon-Zygmund estimate: Let $u(x,t)$ be a (weak) distributional solution to $\partial_t u - \Delta u =f$ on $ [0,1] \times \Tn $. For $1<\ell_1 ,\ell_2 <\infty$ and $k\in \mathbb{Z}$, there holds
\[\begin{aligned} & \Vert \partial_t u\Vert _{L^{\ell_1} ([1/2 ,1],W^{k,\ell_2 })} + \Vert  u \Vert _{L^{\ell_1}([1/2 ,1],W^{k+2,\ell_2 })} \\ & \hspace{3cm}   
\le C_{k,\ell_1,\ell_2} (\Vert f \Vert _{L^{\ell_1}([0 ,1],W^{k,\ell_2 })}+\Vert u \Vert _{L^{\ell_1}([0 ,1],W^{k,\ell_2})}). \end{aligned}  \]

\medskip

It suffices to show one step improvement: for given $k\ge0$, 
\[ \Vert \rho \Vert _{L^\infty ([\delta,1], W^{k+1,p})}\le C( \Vert \rho \Vert _{L^\infty ([0,1], W^{k,p})} ,\delta,k,p,n,V,W).\]
In below, we omit the dependence of constants in $V$ and $W$. Consider the case $n/2<p<n$. Observe the only nonlinear term in \eqref{MV} is 
\[\nabla \cdot ( \rho (\nabla W*\rho )).\] Recalling 
$|\nabla W(z)|\le C/|z|^{n-1}$, if $\rho \in W^{k,p}(\Tn)$, then the Hardy-Littlewood-Sobolev inequality implies 
\begin{equation} \label{eq-r} \Vert \nabla W * \rho\Vert_{W^{k,r}(\Tn)} \le C \Vert \rho \Vert _{W^{k,p}(\Tn)},\end{equation}where $r(p)$ satisfies $\frac 1 r= \frac 1 p - \frac 1 n$. By the Holder inequality, $\rho (\na W*\rho) \in W^{k,q}(\Tn)$, for $\frac {1} q =\frac 1 p+\frac 1r$. Combining above, for $\frac 1 q+ \frac 1 n = \frac 2 p$, 
\[ \Vert \nabla \cdot ( \rho (\nabla W * \rho)) \Vert_{L^ \ell  _tW^{k-1,q}_x} \le C \Vert \rho  \Vert _{L^{2\ell}_t W^{k,p}_x}^2. \]
Now, viewing $f:=\nabla \cdot (\rho (\nabla W * \rho))+\nabla\cdot (\rho\nabla V)$ as given function in $L^\ell _t W^{k-1, q}_x$, the parabolic Calderon-Zygmund estimate yields 
\begin{equation} \label{eq-837}\begin{aligned} 
\Vert \rho \Vert _{L^\ell ([\delta,1], W^{k+1,q})} 
&\le C( \Vert \rho \Vert_{L^{2\ell}([0,1], W^{k,p})}^2+ \Vert \rho \Vert_{L^\ell ([0,1], W^{k,p})}).\end{aligned} \end{equation}By the Sobolev embedding, for $\frac 1 {q^*}=\frac1 q- \frac 1 n = \frac 1p+ \frac 1p-\frac{ 2}n$, 
\[ \Vert \rho \Vert _{L^\ell ([\delta ,1], W^{k,q^*})} \le C (\Vert \rho \Vert _{L^{2\ell} ([0,1], W^{k,p})}, \ell ,\delta ,p,n,k)<\infty.\]

This shows if $\frac n2 <p< n$, $\rho$ gained higher integrability in $x$. Iterating it finite times, we can make $q$ in \eqref{eq-837} becomes larger than $n$. If $p>n$, we directly reach at $q>n$: since $\rho\in L^\ell_t W_x^{k,n-\eps}$ for all $\eps>0$, $r$ in \eqref{eq-r} can be made arbitrarily large and $p>n$. Thus we can make $\frac1 q = \frac 1 p + \frac 1 r <\frac{1}{n}$.   If $p=n$,  we have to apply iteration one time with $p=n-\eps $ so that it reduces to $p>n$ case. Once  \eqref{eq-837} for $q>n$ is attained, Morrey inequality then gives $W^{k,\infty}$ bound. Applying this again for $k$ replaced by $k+1$, we have the following: for all $1<\ell<\infty$, there exists $\ell'=\ell'(\ell,p,n)\in(\ell,\infty)$ such that 
\begin{equation}\label{eq-1661}  \Vert \rho \Vert_{L^{\ell} ([\delta,1], W^{k+1,\infty })} \le C(\Vert \rho \Vert _{L^{\ell'} ([0,1],W^{k,p})},\ell ,\delta,p, n, k).\end{equation} 
Finally, iterating \eqref{eq-1661} for three times $k$, $k+1$, and $k+2$,
\[ \Vert \rho \Vert_{L^{2}([\delta,1],W^{k+1,p})}+\Vert\partial_t  \rho \Vert_{L^{2}([\delta,1],W^{k+1,p})} \le C(\Vert \rho \Vert _{L^\infty([0,1],W^{k,p}) }, \delta, p,n,k ).\]
This gives desired bound on $\Vert \rho \Vert_{L^\infty ([\delta,1], W^{k+1,p})} $, finishing the one step improvement, and thus  \eqref{eq-smoothinglem1} (and consequently \eqref{eq-smoothinglem2}) follow.
\end{proof}
\begin{lemma}\label{lemma-lowerbd} Under the same assumption of Lemma \ref{lemma-regularityrho}, there exists $C=C(\Vert \rho \Vert_{L^\infty_t L^p_x},V,W)<\infty$ such that 
\[ \rho(x,t) \ge C^{-1}>0,\]
for all $x\in \Tn$ and $t\in [1/2,1]$.\end{lemma}
\begin{proof}Write the equation in the form $\rho_t -\Delta \rho = \nabla \cdot (\rho B)$, where $B=\nabla V+ (\nabla W)* \rho $. Note that the vector field $B$ belong to $L^\infty([1/4,1]\times \Tn)$ due to the regularity of $\rho$ obtained by Lemma \ref{lemma-regularityrho}.
By Moser-type parabolic Harnack inequality for nonnegative solutions, 
\[\inf_{t\in[\frac{1}{2},1],\,  x\in \Tn }\rho(x,t) \ge c_0 \sup_{t\in[ \frac{1}{4},1], x\in \Tn  } \rho(x,t)\]
for some $c_0=c_0(\Vert B \Vert _{L^\infty([1/4,1]\times \Tn )},n)>0. $ 
\end{proof}

\begin{remark}\label{remark-regularityrho}
  Lemma \ref{lemma-regularityrho} and Lemma \ref{lemma-lowerbd} also show that any $\rho \in L^{p}$, $p>\max(\frac{n}{2},1)$, which is a solution  to the elliptic equation 
    \[0=\Delta \rho +\nabla \cdot (\rho \nabla V+ \rho \nabla (W*\rho))\] in the sense of distribution is indeed smooth and strictly positive. 
\end{remark}

\begin{lemma} \label{lemma-subsequentiallimit}  Let $\rho(t)$ be a solution to \eqref{MV} for $t>0$. For $p>\max(\frac{n}{2},1)$, suppose $\Vert \rho(t)\Vert _{L^p}\le L<\infty $ for $t\ge1$. For every $\tau_i\to \infty$, there exists a further subsequence (still denoted by $\tau_i$ here) and a stationary (time-independent) smooth solution $\rho_*\in C^\infty$ such that 
\[\rho(\tau_i+\cdot ) \to \rho_* \text{ in } W^{k,p}([-\ell,\ell]\times \Tn) \text{ for all }\ell <\infty  \text{ and } k\in \mathbb{N}  .\]
\begin{proof} From the bounds on $\Vert V\Vert_{L^{\infty}}$ and $\Vert |z|^{n-2} W(z)\Vert_{L^\infty}$ and the fact that $\Tn$ is compact, we have a  lower bound on $\mathcal{F}$ as  $\mathcal{F}(\rho dx) \ge -C$ for some $C=C(\Vert \rho \Vert_{L^\infty(\Tn)})<\infty $. By Lemma~\ref{lemma-regularityrho}, we have a uniform bound on $\Vert \rho(t)\Vert_{L^\infty}$ for all $t\ge2$. Thus $\mathcal{F}(\rho(t)dx)$ monotone decreases to $\alpha_\infty:=\inf_{t} \mathcal{F}(\rho(t)dx)> -\infty $. Given a sequence $\tau_i\to \infty$, define $\rho_i(x,t):= \rho(x,\tau_i+t)$. By Lemma~\ref{lemma-regularityrho}, we can pass to a subsequence and obtain that $\rho_i(x,t)\to \rho_*(x,t)$ smoothly on every compact sets $\Tn\times [-\ell,\ell]$.  Smooth convergence of $\rho_i$ and the lower bound on density from Lemma \ref{lemma-lowerbd} imply that $\mathcal{F}(\rho_* (t)dx) =\lim_{i\to \infty} \mathcal{F}(\rho_{i}(t)dx) = \alpha _\infty$ for all $t\in \mathbb{R}$. In view of the energy dissipation identity 
\[\frac{d}{dt} \mathcal{F}(\rho_*(t)dx) = \int - |\nabla (\log \rho_* +V+ W*\rho_*)|^2 \rho_* dx,  \]$\rho_*$ is a smooth stationary solution to $\nabla \cdot (\rho (\nabla(\log \rho +V+ W*\rho))=0$.

\end{proof}

\end{lemma}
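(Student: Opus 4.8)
The plan is to run the standard $\omega$-limit argument for gradient flows: parabolic smoothing (Lemma~\ref{lemma-regularityrho}) turns the hypothesis $\sup_{t\ge1}\|\rho(t)\|_{L^\infty}<\infty$ into uniform bounds on every space–time derivative of $\rho$ for $t\ge2$, which gives precompactness of the time–shifts; then monotonicity of $\mathcal F$ forces the subsequential limit to be a critical point, i.e.\ stationary. \emph{Step 1 (the energy converges).} Write $L:=\sup_{t\ge1}\|\rho(t)\|_{L^\infty}<\infty$. On $\{\rho\,dx:\|\rho\|_{L^\infty}\le L\}$ the free energy is bounded below: $\int\rho\log\rho\,dx\ge0$ by Jensen, $\int V\,d\mu\ge-\|V\|_{L^\infty}$ by \textbf{(\ref{assump_A1})}, and $\frac12\iint W(x-y)\,d\mu(x)d\mu(y)\ge-CL^2\int_{[-1/2,1/2)^n}(1+|z|^{2-n})\,dz>-\infty$, where the pointwise bound $|W(z)|\le C(1+|z|^{2-n})$ (interpreted as $C(1+|\log|z||)$ when $n=2$) follows by integrating \textbf{(\ref{assump_A3})} and the singularity is locally integrable. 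Hence $\mathcal F(\rho(t)\,dx)\ge-C(L)$ for $t\ge1$. Since $\rho$ is smooth and positive on $[1,\infty)$ by Lemmas~\ref{lemma-regularityrho} and~\ref{lemma-lowerbd}, the dissipation identity $\frac{d}{dt}\mathcal F(\rho(t)\,dx)=-\int|\nabla(\log\rho+V+W*\rho)|^2\,d\mu(t)\le0$ holds, so $\mathcal F(\rho(t)\,dx)\searrow\alpha_\infty>-\infty$ as $t\to\infty$ and $\int_1^\infty\int|\nabla(\log\rho+V+W*\rho)|^2\,d\mu(t)\,dt<\infty$.

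\emph{Step 2 (compactness of the shifts).} Given $\tau_i\to\infty$, assume $\tau_i\ge3$ and set $\rho_i(x,t):=\rho(x,\tau_i+t)$. By Lemma~\ref{lemma-regularityrho} the family $\{\rho_i\}$ is bounded in $C([-\ell,\ell];W^{k,p}(\Tn))$ for every $k,\ell$; differentiating the equation $\partial_t\rho_i=\Delta\rho_i+\nabla\cdot(\rho_i\nabla(V+W*\rho_i))$ in time and using that $\rho\mapsto\nabla W*\rho$ maps $W^{k,p}$ into itself (because $\nabla W\in L^1$ by \textbf{(\ref{assump_A3})}), one gets uniform bounds in $C^j([-\ell,\ell];W^{k,p}(\Tn))$ for all $j,k,\ell$. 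By Rellich–Kondrachov, Arzelà–Ascoli, and a diagonal extraction over $\ell,k\in\mathbb N$, a subsequence satisfies $\rho_i\to\rho_\infty$ in $W^{k,p}([-\ell,\ell]\times\Tn)$ for all $k,\ell$, in particular in $C^\infty_{\mathrm{loc}}(\Tn\times\mathbb R)$. Passing to the limit in the PDE ($W*\rho_i\to W*\rho_\infty$ in each $W^{k,p}$ since $\nabla W\in L^1$ and $\rho_i\to\rho_\infty$ in $L^p$) shows $\rho_\infty$ is a smooth solution of \eqref{MV} on $\Tn\times\mathbb R$; each $\rho_\infty(\cdot,t)$ is a probability density, and $C^{-1}\le\rho_\infty\le C$ by Lemma~\ref{lemma-lowerbd} together with the $L^\infty$ bound.

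\emph{Step 3 (the limit is stationary).} Uniform positivity and smooth convergence let the three pieces of $\mathcal F$ pass to the limit: $\log\rho_i\to\log\rho_\infty$ uniformly, $\int V\,d\mu_i\to\int V\,d\mu_\infty$, and $\iint W\,d\mu_i\,d\mu_i\to\iint W\,d\mu_\infty\,d\mu_\infty$ by dominated convergence (uniform $L^\infty$ bound, integrable $|z|^{2-n}$). Hence for each fixed $t\in\mathbb R$, $\mathcal F(\rho_\infty(t)\,dx)=\lim_i\mathcal F(\rho(\tau_i+t)\,dx)=\alpha_\infty$, so $t\mapsto\mathcal F(\rho_\infty(t)\,dx)$ is constant. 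Applying the dissipation identity to the smooth positive solution $\rho_\infty$ gives $\int_{\Tn}|\nabla(\log\rho_\infty(t)+V+W*\rho_\infty(t))|^2\,d\mu_\infty(t)=0$ for a.e.\ (hence every) $t$; since $\rho_\infty>0$, $\nabla(\log\rho_\infty+V+W*\rho_\infty)\equiv0$, so $\partial_t\rho_\infty=\nabla\cdot(\rho_\infty\nabla(\log\rho_\infty+V+W*\rho_\infty))=0$, i.e.\ $\rho_\infty$ is time-independent. Setting $\rho_0:=\rho_\infty(\cdot,0)\in C^\infty(\Tn)$ — a positive stationary solution of \eqref{MV}, its smoothness also guaranteed by Remark~\ref{remark-regularityrho} — Step~2 yields $\rho(\tau_i+\cdot)\to\rho_0$ in $W^{k,p}([-\ell,\ell]\times\Tn)$ for all $\ell<\infty$ and $k\in\mathbb N$.

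The delicate point is not any isolated estimate but the joint role of the no-blow-up hypothesis: it must simultaneously (i) bound $\mathcal F$ from below along the flow so that $\mathcal F(\rho(t)\,dx)$ converges; (ii) via Lemmas~\ref{lemma-regularityrho} and~\ref{lemma-lowerbd} supply the uniform smoothness and strict positivity that make the shifts precompact in $C^\infty_{\mathrm{loc}}$ and that legitimize the dissipation identity \emph{with equality} for both $\rho$ and $\rho_\infty$; and (iii) let the interaction energy pass to the limit. The Coulomb-type singularity of $W$ enters only through the local integrability of $|z|^{2-n}$ (resp.\ $|\log|z||$ when $n=2$), which is what \textbf{(\ref{assump_A3})} provides, and through $\nabla W\in L^1$, used to keep the nonlocal operator bounded on Sobolev spaces and to pass $W*\rho_i\to W*\rho_\infty$ to the limit.
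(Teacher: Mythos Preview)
Your proof is correct and follows essentially the same approach as the paper's: lower bound on $\mathcal F$ along the flow, compactness of the time-shifts via the parabolic regularity of Lemma~\ref{lemma-regularityrho}, and then the energy-dissipation identity on the limit to force stationarity. You are in fact slightly more careful than the paper in two places: you make the lower bound on $\mathcal F$ depend on $L=\sup_{t\ge1}\|\rho(t)\|_{L^\infty}$ (the paper asserts a universal lower bound for all $\rho\,dx\in\mathcal P^{ac}(\Tn)$, which is imprecise when $W$ carries a genuine Coulomb-type negative singularity), and you spell out how the uniform $C^j([-\ell,\ell];W^{k,p})$ bounds on the shifts are obtained by differentiating the equation in time before invoking Arzel\`a--Ascoli.
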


%

\begin{lemma} \label{lemma-d2L1} Let $(\Sigma,g)$ be a compact Riemannian manifold. For two probability measure $\mu_1$ and $\mu_2$ on $\Sigma$,
\[d_2(\mu_1 ,\mu_2 )^2  \le \mathrm{diam}(\Sigma)^2\, |\mu_1-\mu_2|(\Sigma).\]
\begin{proof}
    Note that $d_2(\mu_1 ,\mu_2 )^2 \le  \mathrm{diam}(\Sigma) d_1(\mu_1 ,\mu_2 ).$ Here $d_p$ is the $p$-Wasserstein distance. By the Kantorovich-Rubinstein duality, 
    \[d_1(\rho_1dx,\rho_2dx)= \sup _{\mathrm{Lip}(\phi)\le 1 } \int \phi(x) d\mu_1(x) - \phi(x) d\mu_2(x) .\]For given $\phi$ with $\mathrm{Lip}\phi \le 1$, choose a point $x_0\in \Sigma$ and we have
\[\int \phi d(\mu_1-\mu_2) =  \int (\phi-\phi(x_0))  d(\mu_1-\mu_2)\le \mathrm{diam}(\Sigma)\int d|\mu_1-\mu_2|. \]
\end{proof}
\end{lemma}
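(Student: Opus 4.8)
The plan is to interpolate between the quadratic and linear optimal transport costs using compactness of $\Sigma$, and then control $d_1$ by Kantorovich--Rubinstein duality. For the first step I would use the pointwise bound $d(x,y)^2\le \diam(\Sigma)\,d(x,y)$ on $\Sigma\times\Sigma$: for every coupling $\pi$ of $\mu_1$ and $\mu_2$ one has
\[
\int_{\Sigma\times\Sigma} d(x,y)^2\,d\pi(x,y)\;\le\;\diam(\Sigma)\int_{\Sigma\times\Sigma} d(x,y)\,d\pi(x,y),
\]
and taking the infimum over $\pi$ gives $d_2(\mu_1,\mu_2)^2\le\diam(\Sigma)\,d_1(\mu_1,\mu_2)$. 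It then remains to prove $d_1(\mu_1,\mu_2)\le\diam(\Sigma)\,|\mu_1-\mu_2|(\Sigma)$.

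For this I would invoke the Kantorovich--Rubinstein duality $d_1(\mu_1,\mu_2)=\sup\{\int_\Sigma\phi\,d(\mu_1-\mu_2):\lip(\phi)\le1\}$, which applies because $\Sigma$ compact makes every $1$-Lipschitz function bounded (and $\diam(\Sigma)<\infty$). Fix such a $\phi$ and a basepoint $x_0\in\Sigma$. Since $\mu_1(\Sigma)=\mu_2(\Sigma)=1$, the mass of the constant cancels, so
\[
\int_\Sigma\phi\,d(\mu_1-\mu_2)=\int_\Sigma\bigl(\phi-\phi(x_0)\bigr)\,d(\mu_1-\mu_2),
\]
and from $|\phi(x)-\phi(x_0)|\le d(x,x_0)\le\diam(\Sigma)$ pointwise this is at most $\diam(\Sigma)\,|\mu_1-\mu_2|(\Sigma)$. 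Taking the supremum over $\phi$ and combining with the first step yields the assertion.

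I do not expect any genuine obstacle here: the whole lemma is a soft estimate. The only points demanding a moment's attention are that compactness is exactly what legitimizes the duality formula and keeps the constants finite, and that subtracting $\phi(x_0)$ is permissible only because both measures have total mass one. I would also remark that the same argument in fact yields the sharper constant $\tfrac12\diam(\Sigma)^2$, either by estimating $\int\bigl(\phi-\phi(x_0)\bigr)\,d(\mu_1-\mu_2)\le\tfrac12\,\mathrm{osc}(\phi)\,|\mu_1-\mu_2|(\Sigma)$ through the Jordan decomposition of $\mu_1-\mu_2$, or by transporting the shared mass $\mu_1\wedge\mu_2$ along the diagonal and moving the remaining mass (of total mass $\tfrac12|\mu_1-\mu_2|(\Sigma)$) in an arbitrary way; but the stated constant is all that is needed downstream.
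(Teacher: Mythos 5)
Your argument is correct and is essentially the paper's own proof: the interpolation $d_2^2\le\diam(\Sigma)\,d_1$, Kantorovich--Rubinstein duality, and the basepoint-subtraction trick (legitimate because both measures have unit mass) are exactly the steps used there, with your coupling justification of the first inequality merely making explicit what the paper leaves implicit. The observation that the constant improves to $\tfrac12\diam(\Sigma)^2$ is a nice extra but not needed.
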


\begin{lemma}\label{lemma-d2interpolation}

Let $\rho_*dx$, $\rho_1dx \in \mathcal{P}^{ac}(\Tn) $ and  assume $\Vert \rho_* \Vert_{W^{2,p}}$, $\Vert \rho_1 \Vert_{W^{2,p}}\le C<\infty $. For given $\varepsilon>0$ there exists $\delta=\delta(\varepsilon,C)>0$, such that 
\[ d_2(\rho_*,\rho_1)\le \delta \text{\quad  implies\quad  } \Vert \rho_* -\rho_1\Vert_{W^{1,p}} \le \eps  . \]
   
\begin{proof}
It is straightforward from a compactness argument. Suppose the assertion false and thus there exist sequences $\rho_{0,n}$ and $\rho_{1,n}$ such that $\Vert \rho_{i,n}\Vert_{W^{2,p}}\le C$ for all $i=1,2$, $n=1,2,\ldots$, 
\begin{equation}\label{eq-tt1}d_2(\rho_{0,n},\rho_{1,n}) \le 1/n  ,\end{equation}and $\Vert \rho_{0,n}-\rho_{1,n}\Vert_{W^{1,p}} \ge \varepsilon_0 >0$ for some $\varepsilon_0>0$. 

By Rellich–Kondrachov theorem, by passing to a subsequence, we can assume $\rho_{0,n}\to \rho_*$ and $\rho_{1,n}\to \rho_1$ both (strongly) in $W^{1,p}$. In view of \eqref{eq-tt1} and Lemma \ref{lemma-d2L1}, $d_2(\rho_*,\rho_1)\le d_2(\rho_*,\rho_{0,n})+d_2(\rho_{0,n},\rho_{1,n})+d_2(\rho_{1,n},\rho_1)$, where the right hand side converges to $0$ as $n\to \infty$. i.e., $\rho_*=\rho_1$. This shows $\Vert\rho_{0,n}-\rho_{1,n}\Vert_{W^{1,p}}\to 0$, a contradiction. 

\end{proof}
\end{lemma}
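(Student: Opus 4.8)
The plan is to argue by contradiction, exploiting the compactness of the embedding $W^{2,p}(\Tn)\hookrightarrow W^{1,p}(\Tn)$. Suppose the statement fails for some $\varepsilon_0>0$: then there exist sequences $\rho_{0,n}\,dx,\rho_{1,n}\,dx\in\mathcal{P}^{ac}(\Tn)$ with $\|\rho_{0,n}\|_{W^{2,p}},\|\rho_{1,n}\|_{W^{2,p}}\le C$, with $d_2(\rho_{0,n},\rho_{1,n})\le 1/n$, but $\|\rho_{0,n}-\rho_{1,n}\|_{W^{1,p}}\ge\varepsilon_0$ for every $n$.

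Next I would invoke the Rellich--Kondrachov theorem to extract a subsequence along which $\rho_{0,n}\to\rho_0$ and $\rho_{1,n}\to\rho_1$ strongly in $W^{1,p}$; since $p>n$, the embedding $W^{1,p}\hookrightarrow C^0(\Tn)$ shows the limits $\rho_0,\rho_1$ are again (continuous) probability densities. In particular $\rho_{i,n}\to\rho_i$ in $L^1(\Tn)$, so $|\rho_{i,n}\,dx-\rho_i\,dx|(\Tn)\to 0$ for $i=0,1$, and Lemma~\ref{lemma-d2L1} then gives $d_2(\rho_{i,n},\rho_i)\to 0$.

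The contradiction is then immediate from the triangle inequality for the metric $d_2$: $d_2(\rho_0,\rho_1)\le d_2(\rho_0,\rho_{0,n})+d_2(\rho_{0,n},\rho_{1,n})+d_2(\rho_{1,n},\rho_1)\to 0$, so $d_2(\rho_0,\rho_1)=0$ and hence $\rho_0=\rho_1$ a.e.; but then $\|\rho_{0,n}-\rho_{1,n}\|_{W^{1,p}}\le\|\rho_{0,n}-\rho_0\|_{W^{1,p}}+\|\rho_1-\rho_{1,n}\|_{W^{1,p}}\to 0$, contradicting $\|\rho_{0,n}-\rho_{1,n}\|_{W^{1,p}}\ge\varepsilon_0$.

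I do not expect any genuine difficulty: this is the standard "uniform bound in a stronger norm $+$ compact embedding $+$ continuity of the coarser metric" template. The only two points worth a line of care are that the $W^{1,p}$-limits are still probability measures (which follows from uniform convergence since $p>n$) and the quantitative passage from $L^1$- to $d_2$-convergence, which is exactly Lemma~\ref{lemma-d2L1}.
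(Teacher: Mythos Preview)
Your proposal is correct and follows essentially the same approach as the paper's own proof: contradiction, Rellich--Kondrachov compactness from $W^{2,p}$ into $W^{1,p}$, then Lemma~\ref{lemma-d2L1} together with the triangle inequality for $d_2$ to force $\rho_0=\rho_1$. You are slightly more explicit than the paper about why the $W^{1,p}$-limits remain probability densities and about the final $W^{1,p}$ triangle inequality, but the argument is otherwise identical.
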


\begin{proof} [Proof of Theorem \ref{thm-mainconvergence}]Choose $\tau_i \to \infty$ and let $\rho_*$ be the smooth stationary solution attained as a limit of $\rho (\tau_i +\cdot)$ in Lemma \ref{lemma-subsequentiallimit}. Our aim is to prove $\rho(t)$ indeed converges to $\rho_*$.

 In view of Lemma \ref{lemma-regularityrho} and Lemma \ref{lemma-d2interpolation}, the \L ojasiewicz inequality shown in Theorem \ref{thm-main-LS} reduces to the following statement: there exists $\sigma_0>0$ such that $d_2(\rho(t)dx,\rho_*dx) \le \sigma_0$ at $t\ge2$, then 
\be \label{eq-LSt}  \Vert (\nabla (\log \rho(t)   +  V +  W *\rho(t))\Vert_{L^2_{\rho(t)}}  \ge c\, (\mathcal{F}(\rho(t))-\mathcal{F}(\rho_*))^\theta .\ee Note we also used $|\mathcal{F}(\rho(t))-\mathcal{F}(\rho_*)|=(\mathcal{F}(\rho(t))-\mathcal{F}(\rho_*))$.
For a sake of convenience, let us define 
\[Y(t):= -\nabla ( \log \rho(t) +  V +  W*\rho(t)) \]
Writing the equation of $\rho$ in the form of continuity equation, we have  
\[ \partial_t \rho+ \nabla \cdot (\rho  Y)=0.\]

\medskip

Suppose we have a time interval $[t_1,t_2]$ with $t_1\ge 2$ and $t_2<\infty$ such that $d_2(\rho(t)dx,\rho_*dx)<\sigma_0$ for $t\in[t_1,t_2]$. For such $t\in [t_1,t_2]$,
\[\frac{d}{dt} \mathcal{F}(\rho)=- \int |Y|^2 \rho dx   \le -c(\mathcal{F}(\rho)-\mathcal{F}(\rho_*))^{\theta}\Big( \int |Y|^2 \rho dx \Big)^{1/2}.\] 
Integrating in time from $t_1$ to $t_2$ using Benamou-Brenier formula \cite{BenamouBrenier2000}, 
\begin{equation}\label{eq-1222} d_2(\rho(t_2),\rho(t_1))\le \frac{1}{c(1-\theta)} (\mathcal{F}(\rho(t_1))-\mathcal{F}(\rho_*))^{1-\theta} .\end{equation} 

\medskip

Now let us choose $\tau>>1$ such that \begin{equation}\label{eq-1111} d_2(\rho(\tau),\rho_*)+\frac{1}{c(1-\theta)} (\mathcal{F}(\rho(\tau ))-\mathcal{F}(\rho_*))^{1-\theta} \le \frac{\sigma_0}2 .\end{equation} This is possible since $d_2(\rho(\tau_i)dx,\rho_* dx)\to 0$ by Lemma \ref{lemma-d2L1} and $\mathcal{F}(\rho(\tau_i)dx)\to \mathcal{F}(\rho_*dx)$ as $i\to\infty$.   \eqref{eq-1222} and \eqref{eq-1111} imply that $d_2(\rho(t)dx,\rho_*dx) <\sigma_0$ for all $t\ge \tau$. If not, there exists the first time $\tau'$ such that $d_2(\rho(\tau')dx,\rho_*dx) =\sigma_0$. Plugging $t_1=\tau$ and $t_2 =\tau'$, \eqref{eq-1222} and \eqref{eq-1111} imply $d_2(\rho(\tau_i), \rho(\tau')) \le \sigma_0/2$, a contradiction. As a consequence, for all $t_1\ge \tau$, letting $t_2=\tau_i$ and taking $i\to \infty$ in \eqref{eq-1222}, 
\begin{align}
d_2(\rho_*,\rho(t_1)) \le \frac{1}{c(1-\theta)}|\mathcal{F}(\rho(t_1))-\mathcal{F}(\rho_*)|^{1-\theta}. \label{FLS}
\end{align} 
This implies the convergence in the Wasserstein distance $d_2(\rho(t),\rho_*)\to 0$ as $t\to \infty$. The smooth convergence follows by Lemma \ref{lemma-regularityrho} and the fact that a smooth convergence implies  the convergence in $d_2$. 

\end{proof}

We obtain a rate of convergence, Corollary~\ref{corollary-rate}, as a direct conseuqnece of \L ojasiewicz inequality. 
\begin{proof}[Proof of Corollary~\ref{corollary-rate}] 
Let $z(t) := \cF(\rho(t)) - \cF(\rho_*)\ge 0 $. By the energy dissipation identity,
\[
z'(t)
= \frac{d}{dt}\cF(\rho(t))
= - \int_{\T^n} |Y(t,x)|^2 \,\rho(t,x)\,dx
= - \| Y(t) \|_{L^2_{\rho(t)} }^2,
\]
where $
Y(t,x) := \nabla\big( \log \rho(t,x) + V(x) + W * \rho(t,x) \big).$ 

We may choose $\tau\ge2$ as in the proof of Theorem \ref{thm-mainconvergence} so that
$\|\rho(t)-\rho_*\|_{W^{1,p}} \le \sigma$ for all $t\ge\tau$.
Hence, for all $t\ge\tau$, $\| Y(t) \|_{L^2_{\rho(t)}} \ge c\, z(t)^\theta,$ and therefore
\begin{equation}\label{eq:z-ODE}
  z'(t)
  = - \| Y(t) \|_{L^2_{\rho(t)}}^2
  \le - c^2 z(t)^{2\theta},
  \quad t\ge\tau.
\end{equation}
Assume $t \geq \tau$ from now on. As $z'(t)=0$ at some large $t$ implies $\rho(t)$ is stationary (and $\rho(t)=\rho_*$), we may assume $z(t)>0$ for all $t>0$.

\medskip

If $\theta = \frac12$, from \eqref{eq:z-ODE} we get $z'(t) \le -c^2 z(t)$. By Grönwall's inequality,
\[
z(t) \le z(\tau) e^{-c^2 (t-\tau)} \le C e^{-c^2 t}.
\]
If $\theta \in \left(\frac12,1\right)$,  using \eqref{eq:z-ODE} we compute
\[
\frac{d}{dt} z(t)^{1-2\theta}
\ge (1-2\theta)\,z(t)^{-2\theta} \big(-c^2 z(t)^{2\theta}\big)
= (2\theta-1)c^2 > 0.
\]
Integrating from $\tau$ to $t$ yields
\[
z(t)^{1-2\theta}
\;\ge\; z(\tau)^{1-2\theta} + (2\theta-1)c^2 (t-\tau).
\]
Since $1-2\theta < 0$, the function $s\mapsto s^{1-2\theta}$ is strictly
decreasing on $(0,\infty)$, and the above implies
\[
z(t)
\le \Big( z(\tau)^{1-2\theta} + (2\theta-1)c^2 (t-\tau) \Big)^{\frac{1}{1-2\theta}}
\le C (1 + t)^{-\frac{1}{2\theta-1}}.
\]Combining \eqref{FLS} with the bounds on $z(t)$, we conclude \eqref{rate}.
\end{proof}
We expect that finer asymptotics for the difference $\rho (t)-\rho_*$ (including sharp convergence rates with matching lower bounds) could be obtained by extending the recent result of the first named author and Hung \cite{choi2024thom} to the Wasserstein gradient flow setting. We plan to address this in future research.

\section{Application}

We demonstrate that sufficiently regular $W$ rules out blow-up and ensures convergence. This type of argument and result would to extend to different settings. We present the following as a representative example.
\begin{theorem} \label{thm-noblowup} In addition to \textup{\textbf{(\ref{assump_A1})}--\textbf{(\ref{assump_A3})}}, if we further assume $W\in W^{1,\infty}(\Tn)$, then for all $\delta>0$, $\sup_{t\ge\delta}\Vert \rho(t)\Vert_{L^\infty}\le C$ for some uniform $C=C(n,V,W,\delta)$. Thus $\rho(t) $ always converges to a limit stationary solution $\rho_*$, which is selected by the initial data. 
\begin{proof} Write the equation in the form
\begin{equation}\label{eq-566} \rho_t -\Delta \rho = \nabla \cdot (\rho B),
\end{equation}
where $B=\nabla V+ (\nabla W)* \rho $. Since $\nabla W\in L^\infty$ and $\rho(t)$ is a probability density, $\Vert B \Vert _{L^\infty}\le C(\Vert \nabla V \Vert_{L^\infty} +\Vert \nabla W\Vert_{L^\infty} )$. We claim that $\Vert \rho(t)\Vert_{L^2}\le C(V,W)$ for $t\ge \delta/2$. This, together with De Giorgi–Nash–Moser theorem on $L^2 \to L^\infty$ estimate for parabolic equation \eqref{eq-566} implies that 
\[\Vert \rho(t)\Vert_{L^\infty}<C(V,W), \text{ for }t\ge \delta.\]
Now the result follows from Theorem \ref{thm-mainconvergence}.

It remains to prove the claim. By the energy estimate on \eqref{eq-566}, i.e., multiply $\rho$ and integrate in $x\in \Tn$,
\[ \partial_t \Vert \rho(t) \Vert_{L^2}^2 \le -\Vert \nabla \rho(t) \Vert_{L^2}^2 +C\Vert \rho(t)\Vert_{L^2}^2, \] for $C=C(V,W)$. In view of the Gagliardo–Nirenberg inequality \cite{nirenberg1959elliptic}
$\Vert \rho  \Vert_{L^2}^{2+\frac{4}{n}} \le C(\Vert \nabla \rho  \Vert_{L^2}^2 \Vert \rho \Vert_{L^1}^{\frac 4n} +\Vert \rho \Vert _{L^1}^{2+\frac{4}{n}})$, 
\[\partial_t \Vert \rho(t)\Vert_{L^2}^2 \le -\frac{1}{C}\Vert \rho(t)\Vert_{L^2}^{2(1+\frac{2}{n})}+C(\Vert \rho(t)\Vert_{L^2}^2 + 1).\]
Integrating this differential inequality $
\Vert \rho(t)\Vert_{L^2}^2 \le C(t^{-\frac{n}{2}}+1)$, for some $C=C(V,W)$. This proves the claim.
\end{proof}
\end{theorem}
In the previous proof, a key ingredient is the local-in-time $L^2$ estimate that is independent of the initial data. One may also use this estimate to establish the existence of solutions for arbitrary $L^1$ initial data.

\medskip
Next, we present an application to the Keller-Segel equation, which serves as a canonical example where the Coulomb-type singularity and analyticity assumptions on $W$ (\textup{\textbf{(\ref{assump_A2})}--\textbf{(\ref{assump_A3})}}) are naturally satisfied.




\begin{theorem}[Keller-Segel chemotaxis model] \label{cor-KSeq}Consider the parabolic-elliptic Keller-Segel models \eqref{eq-KS} possibly with an additional flux induced by the potential $V$. If $V$ is analytic and a given solution does not blow up, then the solution converges smoothly to a stationary solution as $t\to \infty$. 

\begin{proof} As discussed in Example \ref{example_KS}, we may write the model with McKean-Vlasov equation with $W= \chi G_\beta$, for $\beta=0$ or $\beta>0$. It suffices to check that $G_\beta$ satisfies two conditions of $W$. Without loss of generality, we assume $\chi=1$. First, \textbf{(\ref{assump_A2})} follows immediately from the uniqueness of green function $G_{\beta}$ (with mean-zero condition when $\beta=0$) and the fact $z\mapsto G_\beta (-z)$ is also a Green function.  For a convenience, let us view $G_\beta$ as a $\mathbb{Z}^n$-periodic function on $\mathbb{R}^n$.

To prove \textbf{(\ref{assump_A3})} for $n\ge3$, it suffices to prove bound $G_{\beta}(z)\le C_{\beta} |z|^{2-n}$ on a neighborhood of $0\in \Tn$. We recall the following interior estimates for uniformly elliptic equations with analytic (here constant) coefficients: let $u$ be a solution to the equation either 
\[ \Delta u=0 \quad  \text{ or } \quad \Delta u -\alpha u =0, \]
for some $\alpha>0$ on $B_r\subset \mathbb{R}^n$. Then there holds 
\[ |\nabla^k u(0) |\le C_1 \Big ( \frac{C_2}{r}\Big )^{k} \, k! \, \sup_{B_r} |u|.\]
Here $C_1$ and $C_2$ are constants depend only on $n$ and $\alpha>0$. In view of this estimate on the analyticity of solutions, for \textbf{(\ref{assump_A3})} it suffices to obtain  bound on the growth $G_{\beta}$ in the following form:   $|G_\beta (x)| \le C |x|^{2-n} $ on a small neighborhood $B_r\setminus \{0\}$. For $\beta>0$, this implies 
\[|\nabla^k G_\beta (x)|\le  C_1 \frac{C_2^{k}}{|x|^k}\, k! \, \sup_{B_{\frac{|x|}{4}}(x)} |G_\beta| \le C' \frac{C_2 ^k}{|x|^{n-2+k}} \, k!.    \]
In the case $\beta=0$, we argue similarly for $G_0(x)+\frac{|x|^2}{2n}$ obtain desired estimate.  With this purpose, on $\mathbb{R}^n$ consider the Yukawa potential (Newtonian potential for $\beta=0$), $N_\beta:\mathbb{R}^n \to \mathbb{R}$ defined by 
\begin{equation}\label{eq-Nb}N_{\beta }(x):=-\int_0^\infty (4\pi t)^{-\frac n2} e^{-\beta t} e^{-\frac{|x|^2}{4t }} dt . \end{equation} Indeed they are green functions: $N_{0}(z)$ is Newtonian potential which satisfies $\Delta N_0 =\delta_0$ and, for $\beta>0$, $\Delta N_{\beta }-\beta  N_{\beta}=\delta_0 $. Since $e^{-\beta t } \le1$ for all $\beta\ge0$, we estimate 
\[| N_{\beta}(x) |\le| N_0(x)| = c_n |x|^{2-n} ,  \]  for $|x|\le 1$. If $\beta=0$, $u_0:=(G_0+ \frac{|x|^2}{2n}) - N_0$ is harmonic on a neighborhood of $0$. If $\beta>0$, $u_{\beta}:=G_\beta -N_\beta $ solves $\Delta u_\beta -\beta u=0$. By the maximum principle, $u_\beta$ is bounded on a neighborhood of $0$. This shows desired bound on the growth of $G_\beta$ near $0$. 

In the case $n=1$ or $2$, we consider instead the first derivative\footnote{When $1\le n\le 2$ and $\beta=0$, the heat-kernel representation \eqref{eq-Nb} diverges as $t\to\infty$. Instead,
one may define the Newtonian potential via a renormalization:
\[
\mathcal N_0(x):= \lim_{\tau\to\infty} -\int_0^{\tau}(4\pi t)^{-n/2}e^{-|x|^2/4t}\,dt+\ell(\tau),
\]
where $\ell(\tau)$ is chosen according to the desired normalization.
Since we only need estimates for derivatives, we work with $\nabla \mathcal N_0$, for which differentiating under the integral sign yields a convergent integral even when $n=1,2$.} of $G_\beta$. Observe $\partial_{x_i}(G_0+\frac{|x|^2}{2n}- N_0)$ and $\partial_{x_i} (G_\beta - N_{\beta})$ satisfy $\Delta u=0$ and $\Delta u -\beta u =0$, respectively. Moreover
\[\partial_{x_i} N_{\beta} (x)= -\int_0^\infty -\frac{x_i}{2t} \frac{1}{(4\pi t)^{\frac n2}} e^{-\beta t} e^{-\frac{|x|^2}{4t}} dt \]
implies $|\nabla N_{\beta}|\le C_\beta |x|^{1-n}$. This proves $|\nabla G_\beta (x)|\le {C_\beta} |x|^{1-n}.$ Now following a similar argument as before,
\[ |\nabla ^{k+1} G_{\beta}(x)| \le C' \frac{C_2 ^k}{|x|^{n-1+k}} k!,  \] for all $k=0,1\ldots$, and this shows desired condition.

\end{proof}
    
\end{theorem}

Our method works for radial interaction though it does not extends to a smooth function on torus.

\begin{theorem}[radial interaction]\label{thm-radialpotential}
Let $W(x-y)=\om(d^2(x,y))$ and assume that $\om$ satisfies
\begin{equation}\label{omega_growth}
 | \om^{(k)}(r) | \leq k! C_\om^{k+1}{r^{\frac{2-n}{2}-k}} , \quad k=1,2,\ldots .
\end{equation}
Here \eqref{omega_growth} is assumed in place of \textbf{(\ref{assump_A3})}.
Then the main Theorems \ref{thm-main-LS} and \ref{thm-mainconvergence} hold for the McKean-Vlasov equation \eqref{MV}. 
\end{theorem}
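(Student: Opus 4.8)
The plan is to re-run the proofs of Theorems~\ref{thm-main-LS} and~\ref{thm-mainconvergence} with $W=\om(d(\cdot,0)^2)$ in place of an (A3)-potential, isolating the one feature that changes. Assumption~(A2) is immediate from $d(z,0)=d(-z,0)$. Near $z=0$ one has $d(z,0)=|z|$, so $W(z)=\om(|z|^2)$ there, and a Fa\`a di Bruno expansion combined with \eqref{omega_growth} gives $|\na^kW(z)|\le C_W^{k+1}k!\,|z|^{2-n-k}$ on a ball $B_{\delta_0}(0)$; hence (A3) holds near the Coulomb singularity, which is all that is used by every estimate in Sections~\ref{sec-analyticity}--\ref{sec-proofmainthm} that is localized to a neighbourhood of the diagonal. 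The genuinely new object is the cut locus $\Sigma_0:=\mathrm{Cut}_{\Tn}(0)$, which on the flat torus is a finite union of relatively open pieces of the affine hyperplanes bounding the Dirichlet cell $[-\tfrac12,\tfrac12]^n$. Away from $0$, $W$ is analytic on $\Tn\setminus\Sigma_0$ with derivative bounds uniform on compact subsets and with analytic extensions across each face of $\Sigma_0$ (there $d^2$ is analytic and $\om$ is analytic at values $r\ge\tfrac14$); across a face $W$ is merely Lipschitz, $\na W$ having a jump which, as a function along the face, is again real-analytic. Consequently $W,\na W\in L^1(\Tn)$ (indeed $\na W\in BV(\Tn)$) and $W(z)\lesssim|z|^{2-n}$, while distributionally $\na^2W=(\na^2W)_{\mathrm{ac}}+(\text{surface part})$ with $|(\na^2W)_{\mathrm{ac}}(z)|\lesssim|z|^{-n}$ (bounded away from $0$) and the surface part carried by $\Sigma_0$ with analytic density; the same structure holds for $\na^kW$, $k\ge2$.

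Granting this, the PDE lemmas (Lemmas~\ref{lemma-regularityrho}, \ref{lemma-lowerbd}, \ref{lemma-subsequentiallimit}) use $W$ only through $W,\na W\in L^1$ and $W(z)\lesssim|z|^{2-n}$, so they are unchanged, and Lemma~\ref{lemma-IFT2} together with the gradient--comparison step \eqref{eq-gradientcomparison} does not involve $W$. In Section~\ref{sec-analyticity}, $\cM^{\tcU}$ and $\cM^{\tcV}$ are untouched, and $\cM^{\tcW}(\Phi)=P_{\mu_0}\int\na W(\cdot+\Phi(\cdot)-y-\Phi(y))\,d\rho_0(y)$ is still well defined and $\mathring L^p$-valued since $|\na W|\lesssim|z|^{1-n}$ everywhere; the first variation (Lemma~\ref{lemma-1stvariation}) still holds by dominated convergence, the a.c.\ derivative entering only almost everywhere. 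The crucial change is that, from the second Gateaux derivative on, differentiation under the integral is no longer exact: $d\cM^{\tcW}(\Phi)[\xi]$ equals the volume term of \eqref{eq-dMW}, now with $(\na^2W)_{\mathrm{ac}}$, \emph{plus} a surface term $P_{\mu_0}\int_{S_\Phi(x)}(\text{jump of }\na W)\,\langle\xi(\cdot)-\xi(y),\nu\rangle\,d\mathcal H^{n-1}(y)$, where $S_\Phi(x)=\{y:\ x+\Phi(x)-y-\Phi(y)\in\Sigma_0\}$ is, for $\Phi$ small, a smooth hypersurface obtained as a graph over $\Sigma_0$. Writing this surface integral as a translation of the surface measure on $\Sigma_0$ and applying Minkowski's integral inequality, one checks that each such surface operator is bounded $L^\gamma\to L^\gamma$ and symmetric; and since the jump of $\na W$ is analytic along the affine $\Sigma_0$, the higher derivatives $d^k\cM^{\tcW}$ consist of finitely many volume terms -- estimated as in step (iii) of the proof of Theorem~\ref{thm-analyticity2}, using $|\na^{k+1}W_{\mathrm{ac}}(z)|\lesssim C_W^{k+2}(k+1)!\,|z|^{-n+1-k}$ -- and of surface terms with $C^{k+1}k!$-bounded densities and Lipschitz geometry, which gives analyticity of $\cM$ on a ball in $\mathring W^{2,p}$. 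Correspondingly $\cK$ in \eqref{eq-defK}, the operator $L$ in \eqref{eq-L}, and the bilinear form in Remark~\ref{remark-67} acquire the same symmetric surface term, which is bounded $\mathring W^{1,\gamma}\to L^\gamma$; since it lands in $L^\gamma$ it leaves the Calderón--Zygmund estimate of Lemma~\ref{lemma-CZ} intact, so Theorem~\ref{thm-fredholm} (Fredholmness, index $0$) and then Theorem~\ref{thm-LS-G} go through, and with Lemma~\ref{lemma-IFT2} we obtain Theorem~\ref{thm-main-LS}; the convergence scheme of Section~\ref{sec-proofmainthm} then yields Theorem~\ref{thm-mainconvergence}.

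The main obstacle I expect is Lemma~\ref{lemma-M-L}, namely $\|L_\Phi-L\|_{\mathscr L(\mathring W^{2,2},\mathring L^2)}\to0$ as $\Phi\to0$. In the smooth case this came from $|(\na^2W)(z+\delta)-(\na^2W)(z)|\le\|\delta\|\,\|\na^3W\|_\infty$; here $(\na^2W)_{\mathrm{ac}}$ itself jumps across $\Sigma_0$, so this difference is only $O(1)$, but it is nonzero only on the ``crossing set'' of points $y$ for which the segment joining $x+\Phi(x)-y-\Phi(y)$ to $x-y$ meets $\Sigma_0$, a set of measure $O(\|\Phi\|_{\mathring W^{2,p}})$. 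Combining a H\"older estimate on this small set with the Coulomb estimate near $0$, the mean-value estimate on the good region, and the convergence (in $C^1$, uniformly in $x$) of the graphs $S_\Phi(x)\to S_0(x)$ and of the jump data, one recovers the required smallness for both the volume and the surface parts of $\cE^{W}_\Phi$. An alternative is to mollify $W$ only near $\Sigma_0$, away from $0$, and pass to the limit, but then the constants in Theorems~\ref{thm-analyticity2}--\ref{thm-fredholm} and the \L ojasiewicz exponent must be shown uniform, which is essentially the same estimate; I would present the direct surface-term bookkeeping, to keep the parallel with Sections~\ref{sec-analyticity}--\ref{sec-proofmainthm} transparent.
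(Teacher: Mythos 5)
Your plan is correct in outline but takes a genuinely different route from the paper, and for $n\ge 2$ I believe your route is the necessary one. The paper avoids your surface terms entirely by two claims: (i) that $\mathrm{Cut}(\Phi)\subset(\Tn)^2$ is $2(n-1)$-dimensional, so its $\varepsilon$-tubular neighborhood has measure $O(\varepsilon^2)$ and the contributions near the cut locus vanish in the difference quotients, whence \eqref{eq-dMW} and \eqref{eq-GateauxW} hold verbatim with the pointwise a.e.\ derivatives of $W$; and (ii) that $z\mapsto z_T\otimes z_T$, hence the pointwise Hessian $\na^2W=4\om''(d^2)\,z_T\otimes z_T+2\om'(d^2)I_n$, extends continuously across the cut locus, which is what salvages the identity \eqref{eq-KM3} in Lemma~\ref{lemma-M-L}. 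You instead accept that $\na W=2\om'(d^2)\,z_T$ jumps across each face of the Dirichlet cell and carry the resulting surface part of the distributional Hessian through Sections~2--5. On both points of divergence your accounting is the more accurate one: $\mathrm{Cut}$ is cut out by a single scalar equation $x_i-y_i\equiv\tfrac12$, so it is $(2n-1)$-dimensional (codimension one; already for $n=1$ it is a circle in $\mathbb{T}^2$), the crossing set therefore has measure $O(t)$, and the $O(1/t)$ difference quotients of the jumping $\na W$ contribute $O(1)$, converging precisely to your surface integral rather than to zero (the paper's tubular-neighborhood argument does suffice for the first variation of $\widetilde{\cW}$, where $W$ itself is Lipschitz away from the origin and the quotient is $O(1)$); moreover for $n\ge2$ the off-diagonal entries $(z_T)_i(z_T)_j$ jump across $\{z_i=\tfrac12\}$, so $(\na^2W)_{\mathrm{ac}}$ is not continuous there, exactly as you say. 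What your route buys is a linearization $L=d\cM(0)$ that genuinely is the second variation of $G$, which is what the Lyapunov--Schmidt reduction requires.

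The price is the bookkeeping you describe, and there your write-up is still a plan rather than a proof: the $C^{k+1}k!$ bounds for the surface parts of $d^k\cM^{\widetilde{\cW}}$ (these should follow from analyticity of $\om$ at $r\ge\tfrac14$ together with the piecewise-affine structure of $z_T$, but need to be written out), the trace estimate giving boundedness and symmetry of the surface operator from $\mathring{W}^{1,\gamma}$ to $L^\gamma$ (note the surfaces stay a distance $\ge\tfrac12$ from the diagonal, so there is no interaction with the Coulomb singularity), and above all the smallness in Lemma~\ref{lemma-M-L}, where your split into an $O(\Vert\Phi\Vert_{\mathring{W}^{2,p}})$-measure crossing set plus $C^1$-convergence of the perturbed surfaces $S_\Phi(x)\to S_0(x)$ is the right mechanism but must be made uniform in $x$. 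If you complete those three estimates, your argument repairs the step that the paper's shortcut does not support.
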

\begin{proof} 

Condition \eqref{omega_growth} implies that $W(z)=\om (d^2(0,z))$ satisfies \textbf{(\ref{assump_A3})} everywhere except on the cut locus of $0$, denoted by $\mathcal{C}_0 := \{z\in \Tn : z_i=1/2 \text{ mod } 1 \text{ for some } i\}$. The singularity on $\mathcal{C}_0$ is negligible because $\mathcal{C}_0$ has measure zero (codimension 1), affecting neither the integral estimates nor the dominated convergence arguments. We sketch the necessary modifications below.

\medskip

For $\Phi \in \mathring{W}^{2,p}$, $p>n$, assume $\Vert \Phi \Vert_{\mathring{W}^{2,p}}\le \sigma_0 $. If $\sigma_0$ is small, then $x\mapsto x+\Phi(x)$ is a $C^{1,\beta}$-diffeomorphism of $\Tn$. Define the collection of all pairs of cut loci
\[\mathrm{Cut}:= \{(x,y)\in (\Tn)^2  : x_i-y_i =\tfrac12  \, (\text{ mod }1)  \text{ for some }i=1,\ldots n\} . \]
Also define
\[\mathrm{Cut}(\Phi ):=\{(x,y) \in (\Tn)^2: ( x+\Phi(x),y+\Phi(y)) \in \mathrm{Cut} \} .\] 
Note that $\nabla^k W (x+\Phi_x-y-\Phi_y)$ is smooth outside of $\mathrm{Cut}(\Phi)\cup \Delta$ for all $k$. Here $\Delta\subset \Tn \times \Tn$ is the diagonal set. Since $\mathrm{Cut}(\Phi )$ is $2(n-1)$-dimensional, there is a bound on the size of the $\eps$-tubular neighborhood
\[ \mathcal{H}^{2n}(N_\eps ( \mathrm{Cut}(\Phi ))) \le C \eps^2  .\]

In Section \ref{sec-analyticity}, we need to justify the (Gateaux) derivative formulas in \eqref{lemma-1stvariation}, \eqref{eq-dMW} and \eqref{eq-GateauxW}. Consider $\xi\in \mathring{W}^{2,p}$ with $\Vert\xi \Vert _{\mathring{W}^{2,p}}\le1$ and note that in this case $\Vert t \xi \Vert _{C^0} \le  C' |t| $. Consider the difference quotient  
\begin{equation}\label{eq-782}\frac{\tcW(\Phi+t\xi)-\tcW(\Phi)}{t} = \iint D^t _{\Phi,\xi}W(x,y) \, d\rho_*(x)d\rho_*(y),\end{equation}
where $D^t_{\Phi,\xi}W(x,y)$ is equal to 
\[\frac{ W(x+\Phi(x)+t\xi(x)-y-\Phi(y)-t\xi(y)) - W(x+\Phi(x)-y-\Phi(y))}{t}.\]
We divide the region of integration in \eqref{eq-782} into $N_{6C'|t|}(\mathrm{Cut}(\Phi))$ and its complement. On $N_{6C'|t|}(\mathrm{Cut}(\Phi))$, the integrand $D^t_{\Phi,\xi}W(x,y)$ is uniformly bounded by $O(1)|t|^{-1}$ for small $|t|>0$, and since the volume of the tubular neighborhood is bounded by $O(1)|t|^2$, the integral converges to $0$ as $t\to0$. On the complement of the tubular neighborhood, the original argument in Section \ref{sec-analyticity} applies exactly: there exists a geodesic from $x+\Phi_x-y-\Phi_y$ to $x+\Phi_x+t\xi_x-y-\Phi_y-t\xi_y$ on $\Tn$ which never touches the cut locus of $0$, and we use the bounds on the derivative of $\nabla W$ to obtain the desired estimate. As $t\to 0$, the complement of the tubular neighborhood exhausts $\Tn\times \Tn \setminus  \mathrm{Cut}$, and the dominated convergence theorem gives \eqref{lemma-1stvariation}. Indeed, the same argument works for the (Gateaux) derivatives of $\mathcal{M}^{\tcW}$, giving us \eqref{eq-dMW} and \eqref{eq-GateauxW}, and the analyticity of $\mathcal{M}^{\tcW}$ follows.

\medskip 

In Section \ref{sec-fredholm}, there is an estimate involving the integral of $\nabla^2 W(x_1-x_2)$ in Lemma \ref{HLS}. Since the cut locus of $x_1$ is $n-1$-dimensional for all $x_1$, the proof and the estimate do not change at all. 

\medskip

In Section \ref{sec-LS}, we need a pointwise estimate of 
\begin{align*}
\delta _{\Phi}K(x,y) := \na^2 W(x-y + \Phi(x)-\Phi(y)) - \na^2 W(x-y),
\end{align*}
namely $|\delta _{\Phi}K(x,y)|\le C \Vert \Phi\Vert _{\mathring{W}^{2,p}} d(x,y)^{-n}$, in the proof of Lemma \ref{lemma-M-L}. In this estimate, we consider a geodesic joining $x+\Phi(x)-y-\Phi(y)$ and $x-y$ in $\Tn$ and use \eqref{eq-KM3}. We claim that \eqref{eq-KM3} still holds even in the case when the geodesic crosses the cut locus of $0$. By a direct computation, if $z$ is not at the cut locus of $0$, then $\nabla^2 W(z) = 4 \omega''(d^2 (0,z)) z_T\otimes z_T  +2w'(d^2(0,z))I_n $. Here $z_T$ is the unique lift of $z$ in $(-1/2,1/2)^n$. Observe that $z\mapsto z_T\otimes z_T$ extends to a continuous function on $\mathbb{R}^n$. This shows that $\nabla^2 W(z)$ is $C^0(\Tn)$ and piecewise smooth (away from the origin). In particular, \eqref{eq-KM3} remains true and the same result follows. 

\medskip

Finally, in Section \ref{sec-proofmainthm}, we used the fact that $\nabla W(z)$ is an $L^1$ function in the proof of Lemma \ref{lemma-regularityrho}, and it remains true in this case as well.

\end{proof} 
Let us remark on some implications of our method for non-analytic potentials. The analyticity becomes crucial when the target stationary solution is degenerate (i.e., $\ker L$ is non-trivial): indeed, in Corollary \ref{cor-LSnondegenerate}, the \L ojasiewicz inequality was established without analyticity provided that the stationary solution is non-degenerate. Let $\rho(t)$ be a given solution to \eqref{MV} that does not blow up. Following Section \ref{subsec-convergence}, one obtains that $\rho(t)$ has an $\omega$-limit. If one such $\omega$-limit, say $\rho_*$, is non-degenerate, Corollary \ref{cor-LSnondegenerate} implies that $\rho(t)$  converges to $\rho_*$ exponentially. A simple sufficient condition guaranteeing the non-degeneracy of stationary solutions is the (uniform) convexity of $V$ and $W$. Using this approach one may recover or slightly improve some classical results which are based on the displacement convexity or Bakry-Emery method.



\subsection*{Acknowledgments}
 The authors are were supported by  the National Research Foundation(NRF) grant funded by the Korea government (MSIT) (RS-2023-00219980). BC and SJ have been supported by Samsung Science \& Technology Foundation grant SSTF-BA2302-02.

 We would like to thank Pei-Ken Hung, Young-Heon Kim, Soumik Pal, D\'aniel Virosztek, Cale Rankin, Antoine Diez, Pierre Monmarch\'e, and Matthew Rosenzweig for their interest and insightful comments.

\bibliography{LS.bib}
\bibliographystyle{alpha-short}
\end{document}